\setlist[description]{leftmargin=\parindent,labelindent=\parindent}
\newcommand{\dom}{\mathrm{dom}}
\newcommand{\R}{\mathbb{R}}
\newcommand{\sph}{\mathbb{S}}
\newcommand{\K}{\mathcal{K}}
\newcommand{\N}{\mathbb{N}}
\newcommand{\haus}{\mathcal{H}}
\newcommand{\Coco}{\Conv_{\mathrm{cd}}(\mathbb{R}^n)}
\newcommand{\Nu}{\boldsymbol{\nu}}
\newtheorem{theorem}{Theorem}[section]
\newtheorem*{theorem*}{Theorem}
\newtheorem{corollary}[theorem]{Corollary}
\newtheorem{lemma}[theorem]{Lemma}
\newtheorem{definition}[theorem]{Definition}
\theoremstyle{definition}
\newtheorem{remark}[theorem]{Remark}
\newcommand*\sq{\mathbin{\vcenter{\hbox{\rule{.3ex}{.3ex}}}}}
\pgfplotsset{compat=1.17}
\newcommand{\Conv}{\mathrm{Conv}}
\newcommand{\Cosc}{\mathrm{Conv}_{\mathrm{coe}}}
\newcommand{\epi}{\mathrm{epi}}
\newcommand{\V}{\mathrm{Vol}}
\newcommand\blfootnote[1]{%
	\begingroup
	\renewcommand\thefootnote{}\footnote{#1}%
	\addtocounter{footnote}{-1}%
	\endgroup
}
\title{First variation of functional Wulff shapes}
\author{Jacopo Ulivelli}
\date{}
\begin{document}
	\maketitle
	
	\begin{abstract}
		Using an approximation procedure we establish, for suitable perturbations, a weighted functional version of Aleksandrov's variational lemma in the family of convex functions with compact domain. The resulting formula is then applied to evaluate the first variation of a class of functionals on log-concave functions. In particular, we extend a recent result by Huang, Liu, Xi, and Zhao \cite{dual_curvature_log}.
		\blfootnote{
			MSC 2020 Classification: 52A20, 52A38, 52A41, 49J53.\\
			Keywords: Convex function, convex body, Fenchel-Legendre transform, Wulff shape.}
	\end{abstract}
	
	\section{Introduction}
	
	This paper follows in the footsteps of the so-called functionalization of convex geometry. See, for example, \cite[Chapter 9]{Asymp_II}. We focus on the family of convex functions 
	\[\Conv(\R^n)\coloneq\{u:\R^n \to \R\cup \{+ \infty\} \colon u\text{ is convex, lower semi-continuous, } u \not \equiv +\infty\}.\] For $u \in \Conv(\R^n)$, consider the functional volume
	\begin{equation}\label{eq:logconc_vol}
		\mu_n(u)\coloneqq \int_{\R^n}e^{-u(x)}\, dx.
	\end{equation}
	This choice is justified by the Pr\'ekopa-Leindler inequality (see, for example, \cite{GARDBM}) and the functional Blaschke-Santal\'o inequality \cite{Func_Sant}, where $\mu_n$ replaces the volume in the corresponding classical inequalities for compact convex sets. A different angle on measures of convex functions has been pursued in the recently developing theory of valuations on convex functions \cite{cocofu, ColesantiEtAlHadwigertheoremconvex2020,CLM_hom, ColesantiEtAlHadwigertheoremconvex2021, Had3,Had4,KnoerrEquiv,Knoerrsupportduallyepi2021, Knoerr_singular,Knoerr_smooth, FromConvToFunc2023,Li_Legendre, Milman_AGA_perspectives, Mussnig_polar}.
	
	Consider the family $\K^n$ of non-empty, compact, and convex subsets of $\R^n$. We denote by $\V$ the $n$-dimensional Lebesgue measure on $\R^n$ and write $K+L$ for the Minkowski sum of for $K, L \in \K^n$. It is a classical fact (see, for example, \cite[(5.33)]{schneider_2013}) that the right derivative of $\V(K+tL)$ at $0$ takes the explicit form
	\begin{equation}\label{eq:intro_c}
		\left. \frac{d}{dt}\V(K+tL)\right|_{t=0^{+}}=\int_{\sph^{n-1}} h_L \, dS_K,
	\end{equation}
	where $\sph^{n-1}$ is the Euclidean unit sphere, $h_L$ is the support function of $L$, and $S_K$ is the surface area measure of $K$ (more on these definitions in Section 2). A similar question can be posed in $\Conv(\R^n)$. In this case, $\V$ is replaced by $\mu_n$, and Minkowski addition by infimal convolution, defined as \[u \square v(x)\coloneqq \inf_{y \in \R^n}\{u(y)+v(x-y)\}\] for $u,v \in\Conv(\R^n)$. Colesanti and Fragal\`a \cite{ColFrag} investigated the first variation
	\begin{equation}\label{eq:intro_fvar}
		\left. \frac{d}{dt}\mu_n(u\square (t\sq v))\right|_{t=0^{+}}
	\end{equation}
	for $u,v \in \Conv(\R^n)$ in a suitable class (see Section 2 for the definition of $t\sq v$). The same authors further explored the subject of functional mixed volumes together with Bobkov \cite{Quermass_Bob}. Later, Cordero-Erasquin and Klartag \cite{CorKlar} studied \eqref{eq:intro_fvar} in connection with moment measures in the family of essentially continuous convex functions. Rotem \cite[Theorem 1.5]{RotSur2} finally obtained a general formula for the first variation \eqref{eq:intro_fvar}, which reads as follows. For $u \in \Conv(\R^n)$ we write $\dom(u)\coloneqq \{x \in \R^n: u(x)<+\infty\}$ and we denote its gradient by $\nabla u$, and for a real valued function $g$ the function $g^*$ is its Fenchel-Legendre transform \[g^*(x)\coloneqq \sup_{y \in \R^n}\{x \cdot y -u(y)\}.\] For a convex set $K$, we denote its Gauss map by $N_K$.
	\begin{theorem}[Rotem]\label{thm:intro_rot}
		Let $u,v \in \Conv(\R^n)$. If $0<\mu_n(u)<+\infty$, then
		\begin{equation}\label{eq:intro_rot1}
			\begin{gathered}
				\left. \frac{d}{dt}\mu_n(u\square (t\sq v))\right|_{t=0^{+}}\\=\int_{\dom(u)}v^{*}(\nabla u(x))e^{-u(x)}\,dx+\int_{\partial \dom(u)}h_{\dom(v)}(N_{\dom(u)}(y))e^{-u(y)}\,d\mathcal{H}^{n-1}(y).
			\end{gathered}
		\end{equation}
	\end{theorem}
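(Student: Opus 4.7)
The plan is to deduce Theorem \ref{thm:intro_rot} from the weighted functional Aleksandrov lemma announced in the abstract, after reducing to the case where both $\dom(u)$ and $\dom(v)$ are compact. The first step is to Fenchel-dualize in order to linearize the perturbation: since $(u \square (t \sq v))^* = u^* + t v^*$, setting $\phi_t := u^* + t v^*$ gives $u \square (t \sq v) = \phi_t^*$, so the family $\phi_t$ is an additive perturbation of $u^*$. This is the natural functional Wulff-shape setup on the dual side, and the correct input for the Aleksandrov-type statement.

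Next I would perform a double truncation, replacing $u, v$ with convex functions $u_R, v_R \in \Conv(\R^n)$ whose effective domains are $\dom(u) \cap B_R$ and $\dom(v) \cap B_R$ for a large ball $B_R$. Each $u_R, v_R$ has compact domain, so the functional Aleksandrov lemma applies directly, and as $R \to \infty$ one has $\mu_n(u_R) \to \mu_n(u)$ by monotone convergence, together with convergence of both integrals on the right-hand side of \eqref{eq:intro_rot1}. Uniform-in-$t$ control of $\mu_n(u_R \square (t \sq v_R))$ via Pr\'ekopa--Leindler-type estimates lets one interchange $\lim_{R \to \infty}$ with $\tfrac{d}{dt}\big|_{t = 0^+}$.

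For compact-domain $u, v$, I would split
\[\mu_n(u_t) = \int_{\dom(u)} e^{-u_t(x)}\,dx + \int_{\dom(u_t) \setminus \dom(u)} e^{-u_t(x)}\,dx,\]
using $\dom(u_t) = \dom(u) + t \dom(v)$. On $\mathrm{int}(\dom(u))$, writing $u_t(x) = \inf_z \{u(x - tz) + t v(z)\}$ and expanding at a point where $u$ is differentiable gives $u_t(x) = u(x) - t v^*(\nabla u(x)) + o(t)$, since the optimal $z_t$ lies in $\partial v^*(\nabla u(x))$ and satisfies $\nabla u(x) \cdot z_t - v(z_t) = v^*(\nabla u(x))$. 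Dominated convergence then produces the bulk term $\int_{\dom(u)} v^*(\nabla u(x)) e^{-u(x)}\,dx$. On the thin shell $\dom(u_t) \setminus \dom(u)$, a Steiner-type parametrization by the Gauss map $N_{\dom(u)}$ and the continuity of $e^{-u}$ from the inside up to $\partial \dom(u)$ yield the boundary term $\int_{\partial \dom(u)} h_{\dom(v)}(N_{\dom(u)}(y)) e^{-u(y)}\,d\mathcal{H}^{n-1}(y)$.

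The main obstacle is the shell expansion: a convex function need not be continuous up to $\partial \dom(u)$, so $e^{-u}$ can have boundary discontinuities and the naive Steiner expansion does not apply verbatim. Handling this is precisely where the approximation procedure from the abstract does its work --- a careful analysis at non-smooth points of $\partial \dom(u)$, where $N_{\dom(u)}$ is $\mathcal{H}^{n-1}$-almost everywhere single-valued, combined with pointwise monotone limits, makes the boundary contribution rigorous and permits the passage to general $u, v \in \Conv(\R^n)$.
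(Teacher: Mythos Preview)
This theorem is not proved in the present paper; it is quoted as Rotem's result \cite[Theorem~1.5]{RotSur2} and serves only as context for the paper's own contribution, which is Theorem~\ref{thm:intro_gen} (and its weighted version, Theorem~\ref{gen_al}). The paper's machinery applies only when $u\in\Coco$ and the perturbation $\zeta$ lies in $C_{rec}(\R^n)$, and the final remark in Section~4 states explicitly that the method recovers Theorem~\ref{thm:intro_rot} only under the extra assumption $v\in\Coco$. So there is no ``paper's own proof'' of the full statement to compare your proposal against.

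That said, two comments on your sketch. First, your plan is internally inconsistent: paragraphs~1--2 announce that you will invoke the paper's functional Aleksandrov lemma after truncation, but paragraphs~3--4 abandon that route entirely and instead outline a direct bulk/shell computation via the expansion $u_t(x)=u(x)-t\,v^*(\nabla u(x))+o(t)$ and a Steiner parametrization of $\dom(u_t)\setminus\dom(u)$. The latter is essentially the strategy of Rotem's original paper, not the Wulff-shape-in-$\R^{n+1}$ approach developed here. If you want to follow the present paper, you should stay with the first plan: lift to $\R^{n+1}$, approximate $\epi(u_t)$ by compact bodies $K^u+\ell_{T+m}$, apply the weighted Aleksandrov lemma (Lemma~\ref{alex}), and pass to the limit via Lemma~\ref{thm:lim_der}; that is how Theorem~\ref{gen_al} is actually proved.

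Second, the genuine gap is the passage from $v\in\Coco$ to arbitrary $v\in\Conv(\R^n)$. Your appeal to ``Pr\'ekopa--Leindler-type estimates'' for uniform-in-$t$ control after truncating $v$ is not substantiated, and the paper does not supply such an argument either. Without $v\in\Coco$ (equivalently, without $v^*\in C_{rec}(\R^n)$), the recession function $\rho_{v^*}$ need not be continuous, Lemma~\ref{lemma:domain} fails, and the entire construction of Section~\ref{functional_wulff} breaks down (see Remark~\ref{remark:domain}). Recovering the full generality of Rotem's theorem from this paper's techniques would require a separate limiting argument that is neither in your proposal nor in the paper.
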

	Equivalently, \eqref{eq:intro_rot1} can be rewritten as 
	\begin{equation}\label{eq:intro_rot2}
		\left. \frac{d}{dt}\mu_n(u\square (t\sq v))\right|_{t=0^{+}}=\int_{\R^n}v^{*}(y)\, d\mathcal{M}_u(y)+\int_{\sph^{n-1}}h_{\dom(v)}(\Nu)\, d\mathcal{S}_u(\Nu),
	\end{equation}
	where $\mathcal{M}_u$ and $\mathcal{S}_u$ are the push-forward of the measures with densities $e^{-u(x)}dx$ and $e^{-u(y)}d\haus^{n-1}(y)$ obtained through the gradient of $u$ and the Gauss map of $\dom(u)$ respectively. This form has a striking parallelism with \eqref{eq:intro_c}, which is recovered if $u$ and $v$ are the indicator functions of $K, L \in \K^n$.

	More recently, Huang, Liu, Xi, and Zhao \cite{dual_curvature_log} investigated this problem in a generalized setting connected to the dual curvature measures of convex bodies introduced by Huang, Lutwak, Yang, and Zhang \cite{OPstuff}, enriching the variety of topics that this functional-geometric approach embraces. Also, see \cite{dual_curvature_orl}.
	
	For $K \in \K^n$, it is possible to consider more general perturbations than the one obtained via Minkowski addition, preserving the structure of \eqref{eq:intro_c}: These perturbations are obtained as Wulff shapes (also known as Aleksandrov bodies), which we present in detail in Section 3. The Wulff shape construction, which depends on a function $f \in C(\sph^{n-1})$, acts on the support function $h_K$, and for every $t\in \R$ generates a (possibly empty) compact convex set $F_t K$ whose support function is the maximal one with the property \[h_{F_t K}\leq h_K + tf.\] For $t \geq 0$ and $f=h_L$ for some $L \in \K^n$, $F_t K=K+tL$. A classical lemma due to Aleksandrov (see, for example, \cite[Lemma 7.5.3]{schneider_2013}) generalizes \eqref{eq:intro_c} to the formula
	\begin{equation}\label{eq:intro_Al}
		\left. \frac{d}{dt}\V(F_t K)\right|_{t=0}=\int_{\sph^{n-1}} f \, dS_K.
	\end{equation}
	A clear advantage of \eqref{eq:intro_Al} over \eqref{eq:intro_c} is that it provides a double-sided derivative
	This property makes \eqref{eq:intro_Al} a perfect tool to solve variational problems, such as the classical Minkowski problem (see, for example, \cite[Section 8]{schneider_2013}) and its many extensions. The original Minkowski problem can be formulated as follows: For a given Borel measure $\mathcal{S}$ on $\sph^{n-1}$, find sufficient and necessary conditions so that there exists $K \in \K^n$ such that $\mathcal{S}=S_K$. See, for example, \cite{Boeroeczky_LYZ_log_Mink, Boeroeczky_LYZ_Zhao_Gauss, Gardner_Hug_Weil_Xing_Ye_2019, Haberl_LYZ, OPstuff, Zhao_Gauss, Ivaki_Milman_Uniqueness, DylLyu, LivMi, Mui_Aleksandrov} for some recent developments and advances.
	
	In \cite{CorKlar, RotSur1}, functional versions of this problem were considered and partially solved for the measure $\mathcal{M}_u$ in \eqref{eq:intro_rot2}: For a given Borel measure $\mathcal{M}$ on $\R^n$, find sufficient and necessary conditions so that there exists $u \in \Conv(\R^n)$ such that $\mathcal{M}=\mathcal{M}_u$. Further generalizations were explored in \cite{dual_curvature_log, dual_curvature_orl}. One of the fundamental steps in these works was finding a functional version of the Wulff shape: For a continuous function $\zeta:\R^n \to \R$ with compact support and $u \in \Conv(\R^n),  t \in \R$, consider the perturbation
	\begin{equation}\label{eq:intro_Leg}
		(u^*+t\zeta)^*.
	\end{equation}
	A functional version of \eqref{eq:intro_Al} can be found in \cite[(42)]{CorKlar} (later extended in \cite[Lemma 5.2]{dual_curvature_log} to more general measures). In \cite{RotSur1}, lighter hypotheses were considered on $\zeta$, which was required to be bounded and continuous. In all these cases, the first variation under the perturbation \eqref{eq:intro_Leg} reads as
	\begin{equation}\label{eq:intro_al_k}
		\left. \frac{d}{dt} \mu_n((u^*+t\zeta)^*)\right|_{t=0}=\int_{\dom(u)}\zeta(\nabla u(x))e^{-u(x)}\, dx=\int_{\R^n}\zeta(y)\, d\mathcal{M}_u(y),
	\end{equation}
	where $u \in \Conv(\R^n)$ is coercive, that is, $\lim_{|x|\to \infty}u(x)=+\infty$.
	
	A striking difference between the correspondences \eqref{eq:intro_c}-\eqref{eq:intro_rot2} and \eqref{eq:intro_Al}-\eqref{eq:intro_al_k}, is that \eqref{eq:intro_al_k} lacks the boundary term appearing in \eqref{eq:intro_rot2}. This is where our work starts: We propose a method to compute the first variation of $\mu_n((u^*+t\zeta)^*))$ for a family of functions which allows the appearance of a boundary term in \eqref{eq:intro_al_k}. In particular, this is achieved for $u$ in \[\Coco\coloneqq \{u \in \Conv(\R^n): \dom(u)\text{ is compact}\} \] and $\zeta$ in \[C_{rec}(\R^n)\coloneqq \{ \zeta \in C(\R^n) : \rho_\zeta \text{ is continuous and } |\rho_\zeta -\zeta|\text{ is bounded}\} \] where $\rho_\zeta$ is the recession function of $\zeta$, defined as \[\rho_{\zeta}(\Nu)\coloneqq\lim_{t \to +\infty} \frac{\zeta(t\Nu)}{t}, \quad \Nu \in \mathbb{S}^{n-1},\] and then extended homogeneously on $\R^n$. Notice that if $v \in \Conv(\R^n) \cap \Coco$ then $v^* \in C_{rec}(\R^n)$. Therefore, by the properties of the Fenchel-Legendre transform, our approach includes perturbations by infimal convolution, covering the case \eqref{eq:intro_fvar} in $\Coco$. We note that the choice of these families of functions is an inheritance vice of the method at hand. Indeed, our strategy will be to approximate the epi-graph of $(u^*+t\zeta)^*$ via a sequence of compact Wulff shapes (see Section 3 for more details) with specific properties, allowing a limit process which might not be possible in general. Yet, $C_{rec}(\R^n)$ extends substantially the family of suitable perturbations since $\zeta \in C_{rec}(\R^n)$ does not need to have compact support, nor to be bounded. See Remark \ref{remark:domain} for further details on the choice of $C_{rec}(\R^n)$.
 
    Our main result, which will be later exposed in wider generality (that is, for a whole class of functionals including \eqref{eq:logconc_vol}) in Theorem \ref{gen_al}, reads as follows.
	\begin{theorem}\label{thm:intro_gen}
		Let $u \in \Coco$ and $\zeta \in C_{rec}(\R^n)$. Then, 
			\begin{equation}\label{eq:intro_mine}
			\begin{aligned}
				\left. \frac{d}{dt} \mu_n((u^*+t\zeta)^*)\right|_{t=0}=&\int_{\dom(u)}\zeta(\nabla u(x))e^{-u(x)}\, dx + \int_{\partial \dom(u)}\rho_\zeta(N_{\dom(u)}(y))e^{-u(x)}\, d\haus^{n-1}(y)\\=&\int_{\R^n}\zeta(y)\, d\mathcal{M}_u(y)+\int_{\sph^{n-1}}\rho_\zeta(\Nu)\, d\mathcal{S}_u(\Nu).
			\end{aligned}
		\end{equation}
	\end{theorem}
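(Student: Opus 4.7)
The strategy is to lift the problem to $\R^{n+1}$ via the identity $\mu_n(\psi) = \int_{\epi(\psi)} e^{-s}\,dx\,ds$. Since $\epi((u^*+t\zeta)^*)$ is unbounded above, we truncate at height $M$ to obtain the compact convex body
\[
K_M(t) := \{(x,s) \in \R^{n+1} : (u^*+t\zeta)^*(x) \leq s \leq M\}.
\]
Directly, $\int_{K_M(t)} e^{-s}\,dx\,ds = \mu_n((u^*+t\zeta)^*) - e^{-M}\,\V(\dom((u^*+t\zeta)^*))$; the subtracted term vanishes as $M \to \infty$, and its $t$-derivative at $0$ equals $e^{-M}$ times a bounded quantity, obtained by applying \eqref{eq:intro_Al} to the Wulff shape in $\R^n$ associated with $h_{\dom(u)} + t\rho_\zeta$.

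For $K_M := K_M(0) = \epi_M(u)$, the support function on $\sph^n$ decomposes into three parts: on the open lower hemisphere, parameterized by $y \in \R^n$ via $\omega = (y,-1)/\sqrt{1+|y|^2}$, it equals $u^*(y)/\sqrt{1+|y|^2}$; on the equator $\{(\nu,0) : \nu \in \sph^{n-1}\}$, it equals $h_{\dom(u)}(\nu)$; at the north pole it equals $M$. The first-order perturbation induced by replacing $u^*$ with $u^*+t\zeta$ is the continuous function $\phi_\zeta$ on $\sph^n$ given by $\zeta(y)/\sqrt{1+|y|^2}$ on the lower hemisphere and $\rho_\zeta(\nu)$ on the equator. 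The hypothesis $\zeta \in C_{rec}(\R^n)$ is exactly what makes $\phi_\zeta$ continuous across the equator: $\zeta(y)/\sqrt{1+|y|^2} \to \rho_\zeta(\nu)$ as $|y| \to \infty$ with $y/|y| \to \nu$, thanks to continuity of $\rho_\zeta$ and boundedness of $|\rho_\zeta - \zeta|$.

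Applying an $e^{-s}$-weighted Aleksandrov-type variational formula to $K_M(t)$ gives
\[
\frac{d}{dt}\int_{K_M(t)} e^{-s}\,dx\,ds\,\Big|_{t=0} = \int_{\sph^n} \phi_\zeta\,d\tilde{S}_{K_M},
\]
where $\tilde{S}_{K_M}$ is the push-forward of $e^{-s}\,\haus^n|_{\partial K_M}$ by the Gauss map of $K_M$. The boundary $\partial K_M$ decomposes into the graph of $u$ (with normals in the open lower hemisphere), the lateral surface $\partial\dom(u)\times[u(y),M]$ (with normal $(N_{\dom(u)}(y),0)$ on the equator), and the top $\dom(u)\times\{M\}$ (with normal $(0,1)$ at the north pole). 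The three corresponding contributions to the integral are $\int_{\dom(u)} \zeta(\nabla u(x))\,e^{-u(x)}\,dx$ (the Jacobian $\sqrt{1+|\nabla u|^2}$ cancels with the denominator in $\phi_\zeta$), $\int_{\partial\dom(u)} \rho_\zeta(N_{\dom(u)}(y))\,(e^{-u(y)}-e^{-M})\,d\haus^{n-1}(y)$ (from integrating $e^{-s}$ over $s \in [u(y),M]$), and $0$ (since $\phi_\zeta(0,1)=\rho_\zeta(0)=0$ by $1$-homogeneity of $\rho_\zeta$).

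Letting $M\to\infty$ eliminates the $e^{-M}$ terms and yields \eqref{eq:intro_mine}. The main obstacle is to justify the interchange of $\frac{d}{dt}\big|_{t=0}$ with $\lim_{M\to\infty}$: this follows from the first paragraph, since the discrepancy $e^{-M}\frac{d}{dt}\V(\dom((u^*+t\zeta)^*))\big|_{t=0}$ is bounded uniformly in $M$ and vanishes as $M\to\infty$. A secondary subtlety is that $u^*+t\zeta$ need not be convex when $\zeta$ is not, so its Legendre transform is governed by the convex envelope $(u^*+t\zeta)^{**}$; however, the Wulff shape description of $K_M(t)$, which is the only input required for Aleksandrov's formula, is unaffected.
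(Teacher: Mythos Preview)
Your approach is correct and takes a somewhat different route from the paper's. Both proofs lift to $\R^{n+1}$, approximate $\epi(u_t)$ by compact convex bodies, and invoke the weighted Aleksandrov lemma (Lemma~\ref{alex}); the differences lie in the choice of approximants and in how the limit is justified. The paper approximates by the vertically stretched symmetric bodies $K^u+\ell_{T+m}$, extends the perturbation to $\sph^n$ by reflection, and---because the resulting Wulff shapes $K_{m,t}$ satisfy no exact identity with $\mu_n(u_t)$---must establish differentiability at every $t\in[0,\varepsilon]$ via the semigroup property (Theorem~\ref{semigroup}) and then appeal to uniform convergence of derivatives (Lemma~\ref{thm:lim_der}) to pass to the limit in $m$. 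You instead cap at a fixed height $M$, which yields the exact identity $\mu_n(u_t)=\int_{K_M(t)}e^{-s}+e^{-M}\V(\dom(u_t))$ for every large $M$; once both summands are shown differentiable at $t=0$ (the first by Lemma~\ref{alex}, the second by classical Aleksandrov applied to $\dom(u_t)=[h_{\dom(u)}+t\rho_\zeta]$, using Lemma~\ref{lemma:domain}), the left side is differentiable and one simply sends $M\to\infty$ in the resulting equality. This sidesteps the uniform-convergence machinery entirely and is arguably cleaner for this specific statement; the paper's route, on the other hand, is set up to handle the general densities in Theorem~\ref{gen_al} and the singular case in Theorem~\ref{gen_al_sing}.

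Two points you leave implicit deserve to be written out. First, to apply Lemma~\ref{alex} you need $\phi_\zeta\in C(\sph^n)$, and you only define it on the closed lower hemisphere; your claim $\phi_\zeta(0,1)=\rho_\zeta(0)=0$ indicates the intended extension $\phi_\zeta((y',1)/\sqrt{1+|y'|^2})=\rho_\zeta(y')/\sqrt{1+|y'|^2}$ on the upper hemisphere, but this should be stated and its continuity across the equator checked (this is where $\zeta\in C_{rec}(\R^n)$ enters a second time). Second, the assertion that $K_M(t)$ \emph{is} the Wulff shape $[h_{K_M}+t\phi_\zeta]$---the ``Wulff shape description'' you invoke in your last sentence---is the analog of the paper's Lemma~\ref{lemma:ex_claim} and Corollary~\ref{cor:wulff} and is not automatic: with your extension one verifies that the upper-hemisphere half-spaces impose exactly $x\in\dom(u_t)$ and $s\le M$, so that indeed $[h_{K_M}+t\phi_\zeta]=\{(x,s):u_t(x)\le s\le M\}$, but this requires $M>\max_{\dom(u_t)}u_t$ uniformly for small $t$, which in turn rests on Lemma~\ref{lemma:domain}.
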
\noindent
	In Theorem \ref{gen_al} we will prove that a variational formula in the spirit of \eqref{eq:intro_mine} still holds considering different measures on $\R^{n+1}$ other than $\mu_n$, under suitable summability assumptions. This is achieved through some recent results from the weighted Brunn-Minkowski theory. See Section 3 for more details and references. As pointed out later in Remark \ref{remark:HJ}, this kind of result can be interpreted as a Reynolds-type transport theorem under the flow induced by a Hamilton-Jacobi equation.
	
	As a byproduct, we can extend \eqref{eq:intro_mine} in the context of the singular integrals considered in \cite{dual_curvature_log}. In particular, we can remove some hypotheses in the setting of coercive convex functions, which were needed in \cite{dual_curvature_log}, and obtain further improvements in the restricted frame of $\Coco$ for \cite[Theorem 1.3]{dual_curvature_log}. In particular, for $q>0$, we obtain a formula for the one-sided derivative of the functional \[\mu_q(u)\coloneqq \int_{\R^n} e^{-u(x)}|x|^{q-n}\, dx, \quad u \in \Coco. \]
	\begin{theorem}
		\label{thm:dual_new_intro}
		Let $u,v \in \Conv(\R^n)$ and choose $q>0$. If $0<\mu_q(u)<+\infty$, $v \in \Coco$, and $v(0),u(0)<+\infty$, then,
		\begin{align*}
			&	\left. \frac{d}{dt}\mu_q(u\square (t\sq v))\right|_{t=0^{+}}=\\ \int_{\R^n} v^*(\nabla u(x)) e^{-u(x)}|x|^{q-n}\, dx + &\int_{\partial \dom(u)}  h_{\dom(v)}(N_{\dom(u)}(y)) e^{-u(y)}|y|^{q-n}\,d\haus^{n-1}(y).
		\end{align*}
	\end{theorem}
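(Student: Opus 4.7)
The plan is to reduce the infimal-convolution perturbation to the Wulff-shape setting of Theorem~\ref{gen_al} via the Fenchel--Legendre transform, and then to extend from the compact-domain case to the general case by truncating the domain of $u$. The identities $(u \square w)^* = u^* + w^*$ and $(t \sq v)^* = t v^*$ together with biconjugacy give $u \square (t \sq v) = (u^* + t v^*)^*$, so with $\zeta := v^*$ the perturbation is exactly the Wulff shape $(u^* + t \zeta)^*$. Since $v \in \Coco$ and $v(0) < +\infty$, the remark recorded in the introduction gives $\zeta \in C_{rec}(\R^n)$ with recession function $\rho_\zeta = h_{\dom(v)}$. In the case $u \in \Coco$, applying Theorem~\ref{gen_al} with the radial weight $|x|^{q-n}$ (so that the associated functional is exactly $\mu_q$) then yields the desired identity immediately.

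For general $u \in \Conv(\R^n)$ with $\mu_q(u) < +\infty$ and $u(0) < +\infty$, I would approximate $u$ by $u_R := u + I_{\overline{B_R}} \in \Coco$, where $I_{\overline{B_R}}$ is the convex indicator of the closed Euclidean ball of radius $R$. Applying the case above to $u_R$ produces the identity with $u_R$ in place of $u$; it remains to pass $R \to \infty$. The bulk integral over $\dom(u_R) = \dom(u) \cap \overline{B_R}$ converges by monotone convergence to $\int_{\dom(u)} v^*(\nabla u) e^{-u} |x|^{q-n}\,dx$. The boundary $\partial \dom(u_R)$ splits into an original part $\partial \dom(u) \cap \overline{B_R}$, whose contribution converges monotonically to the required boundary integral, and a spherical part $\dom(u) \cap \partial B_R$, whose contribution is bounded by $\diam(\dom(v)) \cdot R^{q-1} \int_{\sph^{n-1}} e^{-u(R \sigma)}\,d\haus^{n-1}(\sigma)$. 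Writing $\mu_q(u)$ in polar coordinates shows that this upper bound is integrable in $R$ over $(0, +\infty)$ precisely because $\mu_q(u) < +\infty$, so it must vanish along some subsequence $R_k \to \infty$, which is the one I would use.

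The main technical obstacle is the interchange of the limits $R \to \infty$ and $t \to 0^+$: Theorem~\ref{gen_al} supplies the one-sided derivative of $\mu_q(u_R \square (t \sq v))$ at $t = 0$ for each $R$, but what is needed is the corresponding derivative of the pointwise monotone limit $\mu_q(u \square (t \sq v))$. A natural way to proceed is to apply Theorem~\ref{gen_al} also at every $s \in (0, t)$ to $u_R \square (s \sq v)$, which still lies in $\Coco$, and then integrate from $0$ to $t$; the monotone pointwise convergence $u_R \square (s \sq v) \uparrow u \square (s \sq v)$ together with the convergence of the right-hand side established above should justify the swap under the integral sign. Alternatively, any concavity of $t \mapsto \mu_q(u \square (t \sq v))^{\alpha}$ for some positive $\alpha$, inherited from the weighted Brunn--Minkowski or Prékopa--Leindler-type inequalities alluded to in the introduction, would guarantee right-differentiability and allow the derivative to be passed through the limit.
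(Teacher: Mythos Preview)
Your architecture matches the paper exactly: rewrite $u\square(t\sq v)=(u^*+tv^*)^*$ with $\zeta=v^*\in C_{rec}(\R^n)$, invoke the compact-domain variational formula, and then approximate a general $u$ by $u_m=u+I_{mB^n}\in\Coco$. One minor correction: for $0<q<n$ the weight $|x|^{q-n}$ is singular at the origin, so you must cite Theorem~\ref{gen_al_sing} (equivalently Corollary~\ref{cor:dual_Rotem}) rather than Theorem~\ref{gen_al}; this is why the paper's restatement adds the hypothesis $o\in\mathrm{int}(\dom(u))$.

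The substantive gap is precisely the one you flag, and neither of your proposed fixes works as written. The subsequence trick for the spherical boundary controls that term only \emph{at} $t=0$ along one particular sequence $R_k$, giving no handle on $t>0$, which is exactly what any interchange argument needs; and ``monotone pointwise convergence'' of $u_R\square(s\sq v)$ does not transfer to the derivative, since neither $\nabla u_{R,s}$ nor $\partial\dom(u_{R,s})$ is monotone in $R$. The paper closes the gap by proving \emph{uniform} convergence of the derivatives on an interval $[0,\varepsilon]$ and then applying Lemma~\ref{thm:lim_der}. Three ingredients, all absent from your sketch, drive that uniformity: (i) coercivity of $u$ yields $e^{-u\square(t\sq v)(x)}\le Ae^{-c|x|}$ with constants independent of $t\in[0,\varepsilon]$, which is strictly stronger than the bare integrability $\mu_q(u)<\infty$ you invoke; (ii) the localization Lemma~\ref{lemma:good_approx}, showing that $u_m\square(t\sq v)=u\square(t\sq v)$ on a ball of radius $\delta(m)\to\infty$ \emph{uniformly in $t\in[0,1]$}, so that the bulk integrands agree there and only tails over $\R^n\setminus\delta(m)B^n$ must be estimated; and (iii) explicit tail bounds (Lemma~\ref{lemma:estimates} for the bulk term and the computation after~\eqref{eq:estimate_boundary} for the boundary term) that depend on $m$ only through the exponential decay and are therefore uniform in $t$. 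The concavity route you mention is not pursued in the paper and would in any case still leave you with the task of identifying the one-sided derivative, i.e.\ with the same analysis.
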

	
	This work is structured as follows. In Section 2, we present some preliminaries on convex sets and convex functions. In Section 3, we justify the notion of functional Wulff shape and, after some technical results on weighted surface area measures, we prove our main result in Theorem \ref{gen_al}, from which \eqref{eq:intro_mine} will follow. Finally, in Section 4, we specialize our techniques to obtain Theorem \ref{thm:dual_new_intro}.

	\section{Preliminaries}\label{Prelim1}
	
	The ambient space where we work is the Euclidean space $\R^n, n \in \N$. At times, we switch the point of view to $\R^{n+1}$, and for the standard basis $\{e_1,\ldots,e_{n+1}\}$ we identify $\R^n$ with the hyperplane $H$ orthogonal to $e_{n+1}$. The origin is denoted by $o$ in both cases. For $x,y \in \R^n$ we denote their scalar product by $x\cdot y$ and write $|x|=\sqrt{x\cdot x}$. The dimension of the ambient space will always be clear from the context, and we use the uppercase $X, Y$ when working in $\R^{n+1}$. Given a set $A \subset \R^n$, $\mathrm{int}A$ denotes its interior.
	
	\subsection{Convex sets}
	We start by summarizing some notions and results from the classical theory of convex sets.
	Of particular interest is the family of non-empty compact convex sets of $\R^n$, denoted by $\K^n$, endowed with the Hausdorff metric. The family of convex bodies, i.e., the elements of $\K^n$ with non-empty interior, is denoted $\K^n_n$. For an exhaustive exposition on the topic, see, for example, Schneider's monograph \cite{schneider_2013}.
	
	To every convex sets $K$ we can associate a function to $\R^n$, called support function, defined as \[h_K(x)\coloneq \sup\{x\cdot y : y \in K\}, \, x \in \R^n. \] Clearly, $h_K$ is finite if and only if $K  \in \K^n$. The behaviour of these functions with respect to Minkowski addition is encoded in the following property: for every convex set $K, L \subset \R^n$ \[h_K+h_L=h_{K+L}.\] A further representation of a convex set $K \subset \R^n$ is given by its indicator function, defined as
	\begin{align*}
		I_K(x)\coloneq
		\begin{cases}
			0 \qquad &\text{  if }x \in K,\\
			+\infty \quad &\text{ otherwise}.
		\end{cases}
	\end{align*} 
	Another class of functions representing a family of convex sets is the one of radial functions. If $K \in \K^n$ is such that $o \in \mathrm{int}K$, the radial function of $K$ is defined as \[r_K(x)\coloneqq \max \{s>0 : sx \in K\}, \, x\in \R^n\setminus\{o\}.\]
	
	Let $K \in \K^n$ and consider its boundary $\partial K$. When $K$ is a $C^2_+$ body, the Gauss map \[N_K: \partial K \to \mathbb{S}^{n-1}\] gives for each $x \in \partial K$ the unit normal vector to $\partial K$ at $x$, and is well defined and bijective. In general, for every $K \in \K^n$, the map $N_{K}$ is defined $\haus^{n-1}$-almost everywhere on $\partial K$. In the points $x$ where this is not single-valued, we consider $N_K(x)$ as the unit normal cone of $K$ at $x$, or generalized Gauss map. This is the set of all the unit vectors $\Nu$ such that $K$ has a tangent hyperplane at $x$ with outer unit normal $\Nu$. Whenever $N_K(\Nu)$ is well defined at $r_K(\Nu)\Nu\in \partial K$, we write $\alpha_K(\Nu)$ for $N_K(r_K(\Nu)\Nu)$. By \cite[Lemma 2.2]{OPstuff}, if a sequence of sets $K_m \in \K^n$ converges to $K \in \K^n$ with respect to the Hausdorff metric, then $\alpha_{K_m}$ converges to $\alpha_K$ almost everywhere on $\sph^{n-1}$ with respect to the spherical Lebesgue measure.
	
	The surface area measure of $K$ is defined as 
	\begin{equation}\label{surfacearea}
		S_K(B)\coloneqq \haus^{n-1}(N_K^{-1}(B))
	\end{equation}
	for every Borel set $B \subset \mathbb{S}^{n-1}$, where $\haus^{n-1}$ is the $(n-1)$-dimensional Hausdorff measure. Surface area measures are finite Borel measures on $\mathbb{S}^{n-1}$, and they are weakly continuous (see \cite[Section 4.2]{schneider_2013}). A sequence $(\mu_m)$ of Borel measures on $\sph^{n-1}$ converges weakly to a Borel measure $\mu$ if, for every $f\in C(\sph^{n-1})$, \[\int_{\sph^{n-1}}f\, d\mu_m \to \int_{\sph^{n-1}}f\, d\mu.\]
	
	\subsection{Convex functions}
	
	For a function $u:\R^n \to \R\cup \{+ \infty\}$, its epigraph is the set \[ \epi(u)\coloneqq \{(x,z)\in \R^{n+1}: u(x)\leq z\}.\] We say that $u$ is a convex function if $\epi(u)$ is a convex set. We work in the space \[\Conv(\R^n)\coloneq\{u:\R^n \to \R\cup \{+ \infty\}\colon u\text{ is convex, lower semi-continuous, } u \not \equiv +\infty\}.\]
	The domain of a convex function is the set \[ \dom(u)\coloneq\{ x \in \R^n: u(x)<+\infty\}. \] 
	
	On the space $\Conv(\R^n)$, we consider the topology induced by epi-convergence, characterized as follows: A sequence of functions $u_j \in \Conv(\R^n)$ epi-converges to $u \in \Conv(\R^n)$ if for every $x \in \R^n$, the following conditions are satisfied.
	\begin{itemize}
		\item For every sequence of points $x_j \in \R^n$ converging to $x \in \R^n$, $u(x)\leq \liminf_{j \to \infty} u_j(x_j)$.
		\item There exists a sequence $(x_j)$ converging to $x$, such that $u(x)=\lim_{j \to \infty} u_j(x_j)$.
	\end{itemize}
	
	Consider the subfamily \[\Cosc(\R^n)\coloneq\{u \in \Conv(\R^n): \lim_{|x|\to \infty} u(x)=+\infty \} \] of convex coercive functions. Here, convergence of level sets gives a more intuitive characterization of epi-convergence. For $u \in \Conv(\R^n), t \in \R \cup \{+\infty\}$ consider the sublevel set \[\{u\leq t\}\coloneq\{x \in \R^n:u(x)\leq t\}. \] For a sequence of functions $u_j \in \Conv(\R^n)$ we use the convention $\{u_j \leq t\} \to \emptyset$ if there exists $j_0 \in \N$ such that $\{u_j \leq t\}=\emptyset$ for every $j \geq j_0$.
	\begin{lemma}[{\cite[Lemma 5]{cocofu}}]\label{epiconv}
		A sequence of functions $(u_j) \subset \Cosc(\R^n)$ epi-converges to $u \in \Cosc(\R^n)$ if and only if $\{u_k \leq t\} \to \{u \leq t\}$ in the Hausdorff metric for every $t \in \R$ with $t \neq \min_{x \in \R^n} u(x)$.
	\end{lemma}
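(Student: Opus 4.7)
The plan is to prove both implications by combining the definition of epi-convergence with the coercivity of $u$. Let $m := \min_{x \in \R^n} u(x)$, finite by coercivity and lower semicontinuity. A preliminary fact I would use repeatedly is \emph{uniform boundedness of sublevel sets}: for each $t \in \R$, there exists a compact $B \subset \R^n$ such that $\{u_j \leq t\} \subset B$ for all large $j$. This follows from coercivity of $u$ combined with the epi-convergence lim-inf inequality: take $R$ so large that $u(y) > t+1$ on $|y| \geq R-1$, pick any $p$ in $\dom(u)$, and use a recovery sequence $p_j \to p$ together with convexity of $u_j$ along chords from $p_j$ to a hypothetical unbounded escape sequence $x_j \in \{u_j \leq t\}$ to derive a contradiction.

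For the forward direction, suppose $u_j \to u$ in the epi-sense. If $t < m$, then $\{u \leq t\} = \emptyset$; any $x_j \in \{u_j \leq t\}$ would converge along a subsequence by uniform boundedness, and the lim-inf inequality would force $u(x) \leq t < m$, contradicting the definition of $m$, so $\{u_j \leq t\}$ is eventually empty. If $t > m$, I verify Hausdorff convergence in two steps. First, any accumulation point $x$ of a sequence $x_j \in \{u_j \leq t\}$ satisfies $u(x) \leq \liminf u_j(x_j) \leq t$, so $x \in \{u \leq t\}$. Second, for any $x \in \{u \leq t\}$, I exploit that $t > m$ to fix $x_0$ with $u(x_0) < t$, set $x^\lambda := (1-\lambda)x + \lambda x_0$ for $\lambda \in (0,1]$, and note $u(x^\lambda) < t$ by convexity; applying the recovery condition at each $x^\lambda$ and diagonalising as $\lambda \to 0^+$ produces the required approximating sequence $y_j \in \{u_j \leq t\}$ with $y_j \to x$.

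For the converse, assume Hausdorff convergence of sublevel sets for every $t \neq m$. To establish the epi lim-inf inequality, let $x_j \to x$ and $\ell := \liminf u_j(x_j)$. If $\ell < m$, pick $t \in (\ell, m)$, extract a subsequence with $x_j \in \{u_j \leq t\}$, and contradict the eventual emptiness of $\{u_j \leq t\}$ that follows from the stated convention applied to $t < m$. Hence $\ell \geq m$, and for each $t \in (\ell, +\infty)$ with $t \neq m$, a subsequence lies in $\{u_j \leq t\}$, so Hausdorff convergence yields $x \in \{u \leq t\}$, i.e., $u(x) \leq t$; letting $t \downarrow \ell$ through values distinct from $m$ gives $u(x) \leq \ell$. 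The recovery condition is verified similarly: for $x$ with $u(x) < +\infty$, for each $t > u(x)$ with $t \neq m$, Hausdorff density yields $y^t_j \in \{u_j \leq t\}$ with $y^t_j \to x$, and diagonalising as $t \downarrow u(x)$ produces $x_j \to x$ with $\limsup u_j(x_j) \leq u(x)$, which together with the lim-inf inequality gives $u_j(x_j) \to u(x)$; when $u(x) = +\infty$ the lim-inf inequality alone forces $u_j(x_j) \to +\infty$ for any $x_j \to x$.

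The main obstacle I expect is the preliminary uniform-boundedness step, because it is precisely the place where coercivity of the limit must be transferred to a quantitative statement about the approximating sequence. The chord trick used for the density direction when $t > m$ is what makes the value $t = m$ genuinely exceptional: simple examples such as $u_j(x) = |x|^2 + 1/j$ with $u(x) = |x|^2$ at $t = 0$ show that Hausdorff convergence of sublevel sets can fail at $t = m$ despite epi-convergence, justifying the exclusion in the statement.
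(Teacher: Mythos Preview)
The paper does not give its own proof of this lemma: it is quoted as \cite[Lemma~5]{cocofu} and used as a black box. So there is nothing in the paper to compare your argument against.

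Your sketch is essentially correct and is the natural direct proof. One detail to tighten is the uniform-boundedness step. With your stated choice ``$u(y)>t+1$ on $|y|\ge R-1$'', the chord argument does not quite close when $t<m$: if $|x_j|\to\infty$ and $z_j$ is the point on the chord from $p_j$ to $x_j$ at distance roughly $R$, the convexity weight on $u_j(x_j)$ tends to $0$, so the estimate you actually get at the limit is $u(z)\le u(p)$, not $u(z)\le t$. The fix is either to choose $R$ so that $u>u(p)+1$ outside the $R$-ball (which exists by coercivity, taking for instance $p$ a minimiser), or to first prove the bound at some level $t'>m$ and then use the trivial inclusion $\{u_j\le t\}\subset\{u_j\le t'\}$ for $t<m$. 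The interpolation trick with $x_0\in\{u<t\}$ for the density half, and the level-by-level reading of Hausdorff convergence in the converse direction (with the convention for convergence to $\emptyset$ handling $t<m$), are both fine as written.
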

	
	For $u \in \Conv(\R^n)$, its Fenchel-Legendre transform is the function \[u^{*}(x)\coloneq\sup_{y \in \R^n} \{x\cdot y - u(y)\}. \] Note that since it is the supremum of affine (and thus convex) functions, $u^{*}$ is again a convex function. Moreover, for a non-convex function $f: \R^n \to \R$, it still makes sense to define $f^{*}$ as above if we admit the trivial case $f^* \equiv +\infty$. In particular, $f^*$ is convex. Moreover, when $u \in \Conv(\R^n)$ one has the important fact \[ (u^{*})^{*}=u.\] 
	
	In $\Conv(\R^n)$, we consider the operations of pointwise addition and infimal convolution. For $u,v \in \Conv(\R^n)$ the latter is defined as \[u \square v (x)\coloneq \inf_{y \in \R^n} \{u(y)+v(x-y)\}, \] and $u \square v \in \Conv(\R^n)$ if at least one of the two functions is coercive (which will always be our case). For $u \in \Conv(\R^n), t \geq 0$, instead of the dilation, we have the two following notions of multiplication: The classical scalar multiplication and the \text{epi-multiplication} \[(t \sq u)(x)\coloneq t u \left( \frac{x}{t}\right) \] for $t>0$, $0\sq u=I_{\{0\}}$. Moreover, one has the fundamental relation
	\begin{equation}\label{dualsum}
		((t\sq u) \square (s\sq v))^{*}=tu^{*}+sv^{*}
	\end{equation}
	for $u,v \in \Conv(\R^n)$ and $s,t \geq 0$.
	Notice that for $u,v \in \Conv(\R^n)$ and $t>0$, $\epi(u \square v)$ is the Minkowski sum $\epi(u)+\epi(v)$, while $\epi(t \sq v)=t(\epi(v))$.
	
	A further meaningful correspondence is the one between support functions and indicator functions of compact convex sets.
	Indeed, if $K \in \K^n$ and $K$ contains the origin, a quick calculation shows that $(I_K)^{*}=h_K$. 
	
	\subsection{The space $\Coco$}\label{coco}
	
	In our investigation, a fundamental role is played by the space \[\Coco\coloneqq\{u\in \Cosc(\R^n): \dom(u) \text{ is compact} \}, \] which is a subset of $\Cosc(\R^n)$. Both these spaces are endowed with the topology induced by epi-convergence introduced earlier. The results and notions exposed in the remainder of this section are from the author and Knoerr \cite{FromConvToFunc2023}, where the family $\Coco$ was introduced as a tool to infer geometric properties of convex functions through local properties of compact convex sets. 
	
	These functions can be obtained from compact convex sets in $\R^{n+1}$ using the following construction where we use the identification $H\equiv \R^n=e^\perp_{n+1}$: To every $K \in \K^{n+1}$ we associate the function $\lfloor K \rfloor:\R^n\rightarrow \R\cup\{+\infty\}$ defined by
	\begin{equation}\label{eq:low_bound_func}
		\lfloor K \rfloor (x)\coloneqq
		\inf\{t\in\R: (x,t)\in K\}.
	\end{equation}
	In addition, $\lfloor K\rfloor(x)=+\infty$ if and only if $(x,t)\notin K$ for all $t\in\R$. Analogously, for every $x \in \R^n$ and $K \in \K^{n+1}$ we can define the concave function 
	\begin{equation*}
		\lceil K \rceil (x)\coloneqq
		\sup\{t\in\R: (x,t)\in K\}.
	\end{equation*}
	In this case $\lceil K\rceil (x)=-\infty$ if $(x,t) \notin K$ for all $t \in \R$. The following results are proved only for the map $\lfloor \cdot \rfloor$ for the sake of brevity, but they also hold (up to the expected adjustments) for the map $\lceil \cdot \rceil$ since for $K \in \K^n$ one has $\lceil K \rceil (x)=-\lfloor R_H K \rfloor (x)+c$, where $R_{H}:\R^{n+1}\to \R^{n+1}$ is the reflection with respect to $H$ and $c$ is a suitable constant. The following properties can be found, for example, in \cite{FromConvToFunc2023}.
	\begin{lemma}\label{lemma:cont_bod}
		For every $K\in\mathcal{K}^{n+1}$, $\lfloor K\rfloor\in \Coco$. Moreover, the map $\lfloor \cdot\rfloor:\mathcal{K} ^{n+1}\rightarrow\Coco$ is continuous.
	\end{lemma}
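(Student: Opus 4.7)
The plan is to reduce both assertions to a single structural identity:
\[\epi(\lfloor K\rfloor) = K + \bigl(\{o\}\times[0,+\infty)\bigr),\]
which follows directly from \eqref{eq:low_bound_func}, since $(x,s)\in\epi(\lfloor K\rfloor)$ iff the infimum defining $\lfloor K\rfloor(x)$ is finite and $\leq s$, and this infimum is attained because $K$ is compact. The right-hand side is the Minkowski sum of a compact convex set with a closed convex cone, hence closed and convex. Consequently $\lfloor K\rfloor$ is convex and lower semi-continuous. The domain equals $\pi_H(K)$, the orthogonal projection onto $H\equiv\R^n$, which is non-empty and compact, so $\lfloor K\rfloor\not\equiv+\infty$ and coercivity is automatic. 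This yields the first claim $\lfloor K\rfloor\in\Coco$.

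For continuity, I would use the sublevel set characterisation of epi-convergence in Lemma \ref{epiconv}. Let $K_j\to K$ in the Hausdorff metric on $\K^{n+1}$, and set $u_j=\lfloor K_j\rfloor$, $u=\lfloor K\rfloor$. The same identity gives
\[\{u\leq t\}=\pi_H\bigl(K\cap(\R^n\times(-\infty,t])\bigr),\]
and similarly for each $u_j$. Fix $t\neq\min u$. If $t<\min u$, then $K\cap(\R^n\times(-\infty,t])=\emptyset$, and Hausdorff convergence forces $K_j\cap(\R^n\times(-\infty,t])=\emptyset$ for all $j$ large enough, so $\{u_j\leq t\}\to\emptyset$ in the required convention. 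If $t>\min u$, there exists $(x_0,t_0)\in K$ with $t_0<t$, so the hyperplane $\{x_{n+1}=t\}$ cuts through the interior of $K$ in the $e_{n+1}$ direction; this transversality is the classical hypothesis ensuring Hausdorff continuity of intersection with a closed half-space for sequences of compact convex sets, so $K_j\cap(\R^n\times(-\infty,t])\to K\cap(\R^n\times(-\infty,t])$. Continuity of the projection $\pi_H$ on Hausdorff-convergent compact sets then transfers this to the sublevel sets, and Lemma \ref{epiconv} yields $u_j\to u$ epi-convergently.

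The key technical ingredient is the Hausdorff continuity of the half-space intersection, and it is exactly the transversality assumption encoded by the exclusion of $t=\min u$ in Lemma \ref{epiconv} that delivers it; no further work is needed. Aside from this, the argument is essentially bookkeeping, with the structural identity for $\epi(\lfloor K\rfloor)$ doing most of the heavy lifting by translating all statements about $\lfloor K\rfloor$ into statements about the compact convex set $K$.
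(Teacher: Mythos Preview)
The paper does not supply a proof of this lemma; it simply records the statement and cites \cite{FromConvToFunc2023}. So there is nothing in the paper to compare against, and your argument must stand on its own---which it does. The structural identity $\epi(\lfloor K\rfloor)=K+(\{o\}\times[0,\infty))$ cleanly gives membership in $\Coco$, and reducing continuity to Hausdorff convergence of sublevel sets via Lemma~\ref{epiconv} is the natural route.

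One phrasing to tighten: ``the hyperplane $\{x_{n+1}=t\}$ cuts through the interior of $K$'' is imprecise, since $K\in\K^{n+1}$ need not have nonempty interior. What you actually need, and what you have already stated, is that $K$ meets the \emph{open} half-space $\{x_{n+1}<t\}$; this is exactly the standard hypothesis guaranteeing $K_j\cap\{x_{n+1}\le t\}\to K\cap\{x_{n+1}\le t\}$ whenever $K_j\to K$ in the Hausdorff metric on compact convex sets (see, e.g., \cite{schneider_2013}). With that minor clarification the proof is complete.
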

	
	Conversely, we may associate to any $u\in \Coco$ a convex set in the following way: We set $M_u\coloneqq\max_{x \in \dom(u)} u(x)$, which is finite since the domain of $u$ is compact and $u$ is convex, and define
	\begin{equation}\label{eq:func_to_bod}
		K^u\coloneqq \epi (u-M_u) \cap R_{H}(\epi (u-M_u))+M_u e_{n+1},
	\end{equation}
	where $R_H$ is the reflection with respect to $H$. This is a non-empty compact and convex set, i.e., $K^u \in \K^{n+1}$. We thus obtain a map
	\begin{equation}\label{ku}
		\begin{gathered}
			\Coco \to \K^{n+1} \\ \qquad \qquad u \mapsto K^u.
		\end{gathered}
	\end{equation}
	By construction $u=\lfloor K^u \rfloor$ for $u\in\Coco$. This approach resembles the one adopted in \cite{Klar_mar}, which in turn stems from \cite{Func_Sant}.
	
	Consider the lower half-sphere $\sph^n_{-}\coloneqq \{\Nu \in \sph^n: \Nu\cdot e_{n+1}<0\}$. We define the lower boundary of $K$ by 
	\begin{align*}
		\partial K_{-}\coloneqq \{X \in \partial K: N_K(X) \cap \sph^n_{-}\neq \emptyset \}.
	\end{align*}
	Notice that the graph of $\lfloor K \rfloor$ coincides with the closure of $\partial K_{-}$.
	In particular, if $u\in\Coco$, then the closure of $\partial K^u_-\subset \R^{n+1}$ is the graph of $u$,
	and we can parameterize it via the map
	\begin{align*}
		f_u:\dom (u) &\to \R^{n+1}\\
		x&\mapsto (x,u(x)).
	\end{align*} 
	If $\gamma:\R^{n+1}\rightarrow\R$ is bounded and Borel measurable, then by the area formula \cite[Theorem 8.1]{Maggi_finite}, we obtain
	\begin{equation}\label{change1}
		\int_{\partial {K^u_-}} \gamma(X)\,d\haus(X)=\int_{\dom(u)} \gamma\left((x,u(x))\right)\sqrt{1+|\nabla u(x)|^2} \,dx, 
	\end{equation}
	where $\sqrt{1+|\nabla u(x)|^2}$ is the approximate Jacobian of $f_u$. \\
	
	If $u\in\Coco$ is differentiable at $x\in\dom(u)$, then $\frac{(\nabla u(x),-1)}{\sqrt{1+|\nabla u(x)|^2}}$ is the unique unit outer normal vector to $\epi(u)$ at $(x,u(x))$. Since $u$ is convex, it is differentiable almost everywhere, and thus, the unit normal vectors to the epigraph are defined almost everywhere. We have the following.
	\begin{lemma}[\!{\cite[Corollary 3.7]{FromConvToFunc2023}}]
		\label{lemma:change3}
		For every $u\in\Coco$ and $\eta \in C(\sph_-^n)$
		\begin{align}
			\label{eq:change3}
			\int_{\sph_-^n} \eta(\Nu)\,dS_{K^u}(\Nu)=\int_{\dom(u)} \eta\left(\frac{(\nabla u(x),-1)}{\sqrt{1+|\nabla u(x)|^2}}\right)\sqrt{1+|\nabla u(x)|^2} \,dx.
		\end{align}
	\end{lemma}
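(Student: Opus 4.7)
The plan is to derive \eqref{eq:change3} by combining the definition of the surface area measure with the area formula \eqref{change1}. I proceed in three steps: first reducing the sphere-integral to a boundary-integral on $\partial K^u_-$, then applying \eqref{change1} to pass to $\dom(u)$, and finally identifying the outer unit normal of $K^u$ at graph points.

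First, I would verify that
\[
\int_{\sph^n_-}\eta(\Nu)\,dS_{K^u}(\Nu)=\int_{\partial K^u_-}\eta(N_{K^u}(X))\,d\haus^n(X).
\]
By definition \eqref{surfacearea}, $S_{K^u}$ is the push-forward of $\haus^n|_{\partial K^u}$ through the (generalized) Gauss map, and $\haus^n$-almost every boundary point of $K^u$ is regular, with a single outer unit normal vector. It therefore suffices to show that $N_{K^u}^{-1}(\sph^n_-)$ coincides with $\partial K^u_-$ up to an $\haus^n$-null set. This follows directly from the definition of $\partial K^u_-$: a regular boundary point whose outer normal lies in $\sph^n_-$ must sit on the graph of $u$, and conversely.

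Second, I would apply \eqref{change1} to the Borel function $\gamma(X)\coloneqq\eta(N_{K^u}(X))$, defined $\haus^n$-a.e.\ on $\partial K^u_-$. Since $u$ is convex, it is differentiable almost everywhere on $\dom(u)$, and the parametrization $f_u(x)=(x,u(x))$ of $\partial K^u_-$ has approximate Jacobian $\sqrt{1+|\nabla u(x)|^2}$. Hence
\[
\int_{\partial K^u_-}\eta(N_{K^u}(X))\,d\haus^n(X)=\int_{\dom(u)}\eta(N_{K^u}(x,u(x)))\sqrt{1+|\nabla u(x)|^2}\,dx.
\]
Third, for a.e.\ $x\in\dom(u)$ the function $u$ is differentiable at $x$, so $(x,u(x))$ is a regular point of $\partial K^u$ with unique outer unit normal $(\nabla u(x),-1)/\sqrt{1+|\nabla u(x)|^2}$, as already noted in the excerpt. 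Substituting this into the integrand produces the right-hand side of \eqref{eq:change3}.

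The main technical delicacy lies in the first step, namely in ensuring that the non-graph portion of $\partial K^u$ whose normal cone may nevertheless intersect $\sph^n_-$ (most notably the "vertical" lateral boundary of $K^u$ above the relative boundary of $\dom(u)$, and ridge-type singular points) forms an $\haus^n$-null subset of $N_{K^u}^{-1}(\sph^n_-)$. This uses the concrete construction of $K^u$ in \eqref{eq:func_to_bod} together with standard facts on the negligibility of the singular set of the boundary of a convex body; the role of the hypothesis $u\in\Coco$ (and hence $K^u\in\K^{n+1}$) is precisely to make these classical tools available.
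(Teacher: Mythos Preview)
The paper does not give its own proof of this lemma; it is quoted verbatim from \cite[Corollary~3.7]{FromConvToFunc2023}. So there is no in-paper argument to compare against. Your three-step outline---push forward $\haus^n|_{\partial K^u}$ to $S_{K^u}$ via the Gauss map, apply the area formula \eqref{change1} to the parametrisation $x\mapsto(x,u(x))$, and identify the outer unit normal at differentiability points---is the standard route and is correct.

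Two small remarks. First, the version of \eqref{change1} stated in the paper assumes $\gamma$ is bounded, while $\eta\in C(\sph^n_-)$ need not be; this is cosmetic (truncate and pass to the limit, or invoke the area formula in its general form). Second, your ``technical delicacy'' about a vertical lateral boundary does not actually arise here: from \eqref{eq:func_to_bod} one computes directly that $K^u=\{(x,z):x\in\dom(u),\ u(x)\le z\le 2M_u-u(x)\}$, so $\partial K^u$ is the union of the graphs of $u$ and of $2M_u-u$, with no vertical facets. The only exceptional set is the $\haus^n$-null set of singular boundary points, exactly as you say.
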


	We conclude this section by recalling the parallelism between support functions and the Fenchel-Legendre transform. Via explicit calculations, we infer
	\begin{equation}\label{esupp}
		\begin{aligned}	
			h_K(y,-1)=&\sup\{X \cdot (y,-1)\colon X \in K\}=\sup\{X \cdot (y,-1)\colon X \in \partial K\}\\=&
			\sup\{(x,\lfloor K \rfloor(x))\cdot (y,-1)\colon x \in \dom(\lfloor K \rfloor)\}\\=&\sup\{x\cdot y-\lfloor K \rfloor (x) \colon x \in \dom(\lfloor K \rfloor)\}\\=&\sup\{x\cdot y-\lfloor K \rfloor (x) \colon x \in \R^n\}={ \lfloor K \rfloor}^{*}(y).
		\end{aligned}
	\end{equation}
	When $K=K^u$ for some $u \in \Coco$, \eqref{esupp} takes the form
	\begin{equation}\label{esupp2}
		h_{K^u}(y,-1)=u^{*}(y)
	\end{equation}
	The map \[ K \mapsto h_K(\cdot,-1)\] was already considered, for example, by Alesker \cite{Alesker} and Knoerr \cite{Knoerrsupportduallyepi2021} to create a correspondence between $\K^{n+1}$ and the space of finite convex functions. Equation \eqref{esupp} shows that the latter point of view and the one presented here are dual.	
	
	\section{First variations for measures of epigraphs}
	
	Consider on $\R^{n+1}$ a measure $\mu$ such that \[d\mu(x,z)=d\eta(x)d\omega(z), x \in \R^n\equiv H, z \in \R\equiv H^{\perp},\] where $\eta$ and $\omega$ are positive Borel measures on $\R^n$ and $\R$ respectively.
	The space of interest is always some subset of the family of functions $\Conv(\R^n)$, and we want to evaluate the measure of the epigraph of $u$ through $\mu$, that is (using the abuse of notation $\mu(u)\coloneqq \mu(\epi(u))$) \[\mu(u)=\int_{\epi(u)}\,d\mu(z,x)=\int_{\dom(u)}\int_{u(x)}^{+\infty} \,d\omega(z)d\eta(x). \] Ignoring, for now, the various summability assumptions, if we define $\Phi(t)=\omega([t,+\infty))$ we can write \[\mu(u)=\int_{\dom(u)}\Phi(u(x))\,d\eta(x). \]
	We focus on the case where there exist $\phi \in C(\R)$ and $\psi \in C(\R^n)$ such that $d\omega(z)=\phi(z)dz$ and $d\eta(x)=\psi(x)dx$. We now introduce a suitable deformation process for functions in $\Coco$, which we build upon the classical notion of Wulff shape.
	
	\subsection{Wulff shapes of convex functions}\label{functional_wulff}
	
	The concept of Wulff shape, introduced more than a century ago by Wulff \cite{Wulff}, is nowadays a well-established scientific tool, especially in the study of the shapes of crystals. Significant developments have been obtained throughout the 20th century from the mathematical perspective. See, for example, the work of Fonseca \cite{Fon}. 
	\begin{definition}\label{def:Wulff_shape}
		Consider a function $f:\mathbb{S}^{n-1}\to \R$. Its Wulff shape is the set \[[f]\coloneq \bigcap_{\Nu \in \mathbb{S}^{n-1}} H^{-}_{\Nu}(f(\Nu)), \] where $H^{-}_{\Nu}(t)\coloneq\{x \in\ \R^n\colon \Nu \cdot x \leq t\}$ is the negative closed half-space with outer normal $\Nu$ and distance $t$ from the origin. Equivalently, $[f]$ is the unique maximal (with respect to inclusion) convex set satisfying the condition 
		\begin{equation}\label{eq:equiv_wulff}
			h_{[f]}(\Nu)\leq f(\Nu) \text{ for every } \Nu \in \sph^{n-1}.
		\end{equation}
	\end{definition}
	\begin{remark}\label{rem:tr_Wulff}
		Notice that if $f>c>0$, $[f]$ is clearly non-empty. In general, if $\ell_y(x)\coloneq y \cdot x, y \in \R^n$ and $f-\ell_y>c>0$, then $[f]$ is non-empty and $y$ is in the interior of $[f]$. In particular, if $[f]$ is non-empty,  
		\begin{equation}\label{eq:trans_wulff}
			[f+\ell_y]=[f]+y,
		\end{equation}
		for every $y \in \R^n$ (notice that $\ell_y=h_{\{y\}}$). Indeed, by \eqref{eq:equiv_wulff}, for every $y \in \R^n$ \[h_{[f]+y}=h_{[f]}+\ell_y \leq f+\ell_y.\] If, by contradiction, $h_{[f]+y}$ was not maximal for $f+\ell_y$, neither would be $h_{[f]}$ for $f$, which would contradict \eqref{eq:equiv_wulff}, proving \eqref{eq:trans_wulff}.
	\end{remark}
	
	  Recall that we consider the admissible family of perturbations  
	\[C_{rec}(\R^n)\coloneqq \{ \zeta \in C(\R^n) : \rho_\zeta \text{ is continuous and } |\rho_\zeta -\zeta|\text{ is bounded}\},\] where \[\rho_{\zeta}(\Nu)\coloneqq \lim_{t \to +\infty} \frac{\zeta(t\Nu)}{t}, \quad \Nu \in \mathbb{S}^{n-1}.\] As for support functions, we use $\rho_\zeta$ to denote both a function on $\sph^{n-1}$ and its $1$-homogeneous extension given by $\rho_\zeta(x)= |x| \rho_\zeta(x/|x|), x \in \R^n$.
    We now show how, for $u \in \Coco$ and $\zeta \in C_{rec}(\R^n)$, the perturbation \[u_t\coloneq (u^*+t\zeta)^*\] can be represented, for $t>0$ sufficiently small, by means of a Wulff shape.
	
	For $\zeta \in C_{rec}(\R^n)$ we consider the function $\bar{\zeta}$ on $\mathbb{S}^{n}_{-}$ obtained as
	\begin{equation}\label{eq:zeta_proj}
		\bar{\zeta}(\Nu)\coloneq \frac{\zeta\left(g(\Nu)\right)}{\sqrt{1+\left|g(\Nu)\right|^2}}.
	\end{equation}
	Here $g: \mathbb{S}^n_{-}\to \R^n$ is the gnomonic projection
	\begin{equation}\label{gnomonic_projection}
		\Nu \mapsto \frac{\Nu-(e_{n+1}\cdot \Nu)e_{n+1}}{e_{n+1}\cdot \Nu}. 
	\end{equation}
	We will make use of the extension of $\bar{\zeta}$ on the whole $\mathbb{S}^{n}$ obtained by reflection on $H$, that is, if $\Nu=(\nu_1,\dots,\nu_{n+1}) \in \sph^n_{-}$, $\bar{\zeta}(\Nu)=\bar{\zeta}((\nu_1,\dots,\nu_n,-\nu_{n+1}))$. When $\nu_{n+1}=0$, $\bar{\zeta}$ is extended by continuity. This extension exists and is finite since $\zeta \in C_{rec}(\R^n)$. Note that with the identification $\mathbb{S}^n \cap H\equiv \mathbb{S}^{n-1}$ we have that $\bar{\zeta}$ restricted to $\mathbb{S}^{n-1}$ is equal to $\rho_\zeta$. To make the notation lighter, we refer to $\bar{\zeta}$ both for the transform \eqref{eq:zeta_proj} and the extension. 
 
    We will use this construction to work with the Wulff shape (in $\R^{n+1}$) obtained as \[ [h_{K^u}+t \overline{\zeta}]\] and its connection with $(u^*+t\zeta)^*$, which we investigate in this subsection. First, we show that the condition $\zeta \in C_{rec}(\R^n)$ is sufficient to guarantee that $(u^*+t\zeta)^* \in \Coco$. In fact, we prove more.
	\begin{lemma}\label{lemma:domain}
		Let $u \in \Coco$ and $\zeta \in C_{rec}(\R^n)$. Then, for every  $t \in [0,\varepsilon]$ for $\varepsilon>0$ sufficiently small,
		\[ \dom (u_t)=[h_{\dom(u)}+ t \rho_{\zeta}].\]
	\end{lemma}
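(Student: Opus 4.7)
The plan is to exploit that for $u \in \Coco$, the Legendre transform $u^*$ agrees with the support function $h_K$ of $K := \dom(u)$ up to a bounded additive error, so that $u^* + t\zeta$ is a bounded perturbation of the (positively) $1$-homogeneous function $F_t := h_K + t\rho_\zeta$ on $\R^n$. Since the conjugate of such a bounded perturbation of a $1$-homogeneous function is finite precisely on the associated Wulff shape, this will identify $\dom(u_t)$ with $[F_t]$.

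Concretely, since $K$ is compact and $u$ is convex, one has $m := \min_K u \le u \le M := \max_K u < \infty$, and the formula $u^*(y) = \sup_{x \in K}(x \cdot y - u(x))$ immediately yields $h_K(y) - M \le u^*(y) \le h_K(y) - m$. Combined with the defining bound $|\zeta - \rho_\zeta| \le C$ inherent in $\zeta \in C_{rec}(\R^n)$, this gives the sandwich
\[ F_t(y) - (M + tC) \ \le\ (u^* + t\zeta)(y)\ \le\ F_t(y) + tC - m \quad \text{for all } y \in \R^n. \]
Next I would use that the positive $1$-homogeneity of $F_t$ forces $\sup_y(x \cdot y - F_t(y))$ to equal $0$ when $x \cdot \nu \le F_t(\nu)$ for every $\nu \in \sph^{n-1}$ (that is, $x \in [F_t]$) and to equal $+\infty$ otherwise, as is seen by rescaling $y = s\nu$ and sending $s \to +\infty$. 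Feeding this into the sandwich: for $x \in [F_t]$, homogeneity gives $x \cdot y \le F_t(y)$ on all of $\R^n$, and the upper bound yields $u_t(x) \le M + tC < \infty$; for $x \notin [F_t]$, pick $\nu_0 \in \sph^{n-1}$ with $x \cdot \nu_0 > F_t(\nu_0)$, test along $y = s\nu_0$, and use the lower bound to obtain $x \cdot y - (u^* + t\zeta)(y) \ge s(x \cdot \nu_0 - F_t(\nu_0)) + m - tC \to +\infty$, forcing $u_t(x) = +\infty$. The set identity $\dom(u_t) = [F_t] = [h_K + t\rho_\zeta]$ follows.

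The role of the smallness of $\varepsilon$ is then only to guarantee that $[F_t]$ remains non-empty, so that $u_t$ stays in $\Coco$. At $t = 0$, $[F_0] = K \ne \emptyset$. Using the translation property in Remark~\ref{rem:tr_Wulff} with $f = h_{K-x_0} + t\rho_\zeta$ and $y = x_0$, one has $[h_K + t\rho_\zeta] = [h_{K-x_0} + t\rho_\zeta] + x_0$; choosing $x_0$ in the interior of $K$ so that $h_{K - x_0} \ge r > 0$ on $\sph^{n-1}$, and taking $t \le r / (2\max_{\sph^{n-1}}|\rho_\zeta|)$, the perturbation $h_{K-x_0} + t\rho_\zeta$ stays uniformly bounded below by $r/2$, hence its Wulff shape contains the origin and $[F_t] \ne \emptyset$. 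The one delicate point I expect is the degenerate case where $K$ has empty interior in $\R^n$, which would require restricting the homogeneity analysis to the affine hull of $K$ together with a transverse control; the identification $\dom(u_t) = [F_t]$ itself, however, is a transparent consequence of the bounded comparison with the homogeneous template $F_t$.
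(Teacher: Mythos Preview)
Your proof is correct and follows essentially the same route as the paper's: both sandwich $u^*+t\zeta$ between $h_{\dom(u)}+t\rho_\zeta$ plus/minus a constant using the bounds $h_{\dom(u)}-M\le u^*\le h_{\dom(u)}-m$ and $|\zeta-\rho_\zeta|\le C$, and then read off $\dom(u_t)$ from the conjugate of the $1$-homogeneous template. The only cosmetic differences are that the paper cites an external reference for the identity $(h_{\dom(u)}+t\rho_\zeta)^*=I_{[h_{\dom(u)}+t\rho_\zeta]}$ while you compute it directly via homogeneity, and you supply an explicit argument (via Remark~\ref{rem:tr_Wulff}) for non-emptiness of $[F_t]$ for small $t$, whereas the paper simply declares ``choose $\varepsilon>0$ such that $u_t$ is proper''; your aside on the empty-interior case is a fair caveat but does not affect the identification itself.
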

	\begin{proof}
		Since $u \in \Coco$, we have that $m\coloneqq \min_{x \dom(u)} u(x)$ and $M\coloneqq \max_{x \in \dom(u)} u(x)$ exist and are finite. Therefore, \[ I_{\dom(u)}+m\leq u \leq I_{\dom(u)}+M,\] which, by the properties of the Fenchel-Legendre transform, is equivalent to 
		\begin{equation}\label{eq:conv_rec}
		 h_{\dom(u)}-M \leq u^* \leq h_{\dom(u)}-m.
		\end{equation}
		Choose $\varepsilon>0$ such that $u_t$ is proper for every $t \in [0, \varepsilon]$. Then \[h_{\dom(u)}-M+t(\rho_\zeta-C)\leq u^*+t\zeta\leq h_{\dom(u)}-m+t(\rho_\zeta+C).\] By the properties of Wulff shapes (see, for example, \cite[Proposition 1.2]{Fon}) \[(h_{\dom(u)}+t \rho_\zeta)^*=I_{[h_{\dom(u)}+ t \rho_{\zeta}]}.\] Therefore, we infer \[I_{[h_{\dom(u)}+ t \rho_{\zeta}]}+m-tC\leq u_t\leq I_{[h_{\dom(u)}+ t \rho_{\zeta}]}+M+tC,\] concluding the proof.
	\end{proof} \noindent
	The following lemma will be crucial in the proof of our main result.
	\begin{lemma}\label{lemma:ex_claim}
		In the hypotheses of Lemma \ref{lemma:domain} there exists $T>0$ such that \[[h_{K^u+\ell_{T+\tau}}+t\overline{\zeta}]=[h_{K^u+\ell_{T}}+t \overline{\zeta}]+\ell_\tau\] for every $\tau>0$.
	\end{lemma}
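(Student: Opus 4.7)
The plan is to reduce the identity to a direct application of Remark~\ref{rem:tr_Wulff}, now carried out in one dimension higher, i.e.\ for Wulff shapes in $\R^{n+1}$ generated by functions on $\sph^n$. I read $\ell_T$ as the support function of the singleton $\{T e_{n+1}\}$, namely $\ell_T(\Nu) = T\Nu_{n+1}$, and $\ell_\tau$ analogously; then $K^u + \ell_T$ stands for the translate $K^u + T e_{n+1}$, so that $h_{K^u + \ell_T}(\Nu) = h_{K^u}(\Nu) + T \Nu_{n+1}$, and $[f] + \ell_\tau$ on the right-hand side is to be understood as the translate of the set $[f]$ by the vector $\tau e_{n+1}$, consistently with the set/function convention of Remark~\ref{rem:tr_Wulff}. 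A direct evaluation on $\sph^n$ then yields
\[h_{K^u + \ell_T}(\Nu) + t\bar{\zeta}(\Nu) + \tau \Nu_{n+1} = h_{K^u}(\Nu) + (T+\tau)\Nu_{n+1} + t\bar{\zeta}(\Nu) = h_{K^u + \ell_{T+\tau}}(\Nu) + t\bar{\zeta}(\Nu),\]
so that the claimed equality coincides with the identity $[f + \ell_{\tau e_{n+1}}] = [f] + \tau e_{n+1}$ for $f \coloneqq h_{K^u + \ell_T} + t\bar{\zeta}$.

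By Remark~\ref{rem:tr_Wulff}, this identity is automatic once $[f]$ is non-empty, so the entire content of the lemma reduces to producing $T > 0$ for which $[h_{K^u + \ell_T} + t\bar{\zeta}] \neq \emptyset$. I would invoke once again the positivity criterion of Remark~\ref{rem:tr_Wulff} (now stated on $\sph^n$ rather than $\sph^{n-1}$): it suffices to find $y_0 \in \R^{n+1}$ and $c > 0$ with $f - \ell_{y_0} > c$ on $\sph^n$. Choosing $X_0 \in \mathrm{int}(K^u)$ together with $\delta > 0$ such that $B(X_0, \delta) \subset K^u$, and setting $y_0 \coloneqq X_0 + T e_{n+1}$, one has
\[h_{K^u + \ell_T}(\Nu) - \ell_{y_0}(\Nu) = h_{K^u}(\Nu) + T\Nu_{n+1} - X_0\cdot \Nu - T\Nu_{n+1} = h_{K^u}(\Nu) - X_0 \cdot \Nu \geq \delta\]
uniformly in $T \geq 0$ and $\Nu \in \sph^n$. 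Since $\zeta \in C_{rec}(\R^n)$ makes $\bar{\zeta}$ continuous on the compact sphere $\sph^n$, we have $M \coloneqq \|\bar{\zeta}\|_\infty < \infty$; shrinking the $\varepsilon$ of Lemma~\ref{lemma:domain} further to guarantee $tM < \delta/2$ yields $f - \ell_{y_0} \geq \delta/2 > 0$, and hence $[f] \neq \emptyset$ for every $T \geq 0$.

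The main difficulty I anticipate is not computational -- these checks are essentially bookkeeping once Remark~\ref{rem:tr_Wulff} is identified as the right tool -- but rather pinning down the proper interpretation of the overloaded notation $\ell_T$, $\ell_\tau$, $K^u + \ell_T$, and $[f] + \ell_\tau$, which freely mixes sets and functions in the spirit of Remark~\ref{rem:tr_Wulff}. A secondary technical caveat is the degenerate case in which $K^u$ has empty interior in $\R^{n+1}$, corresponding to $u$ being constant on its compact domain; in that situation one cannot choose $X_0 \in \mathrm{int}(K^u)$, and the estimate above must instead be carried out relative to the affine hull of $K^u$, where the translation identity of Remark~\ref{rem:tr_Wulff} still applies after projection.
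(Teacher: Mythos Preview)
Your proposal rests on a misreading of the notation. In this lemma (and throughout the proof of Theorem~\ref{gen_al}), $\ell_T$ is \emph{not} the linear functional $\Nu\mapsto T\Nu_{n+1}$; it is the vertical segment $\ell_T=\{se_{n+1}:s\in[0,T]\}$, and $K^u+\ell_T$ is the Minkowski sum with this segment, i.e.\ a vertical stretching of $K^u$, not a translation. (The paper does overload $\ell$---see Remark~\ref{rem:tr_Al}---but with a scalar subscript and in a sum of sets the segment meaning is the intended one; this is made explicit in the proof of Theorem~\ref{gen_al}, and it is the only reading under which Corollary~\ref{cor:wulff} makes sense.) With the correct reading one has $h_{\ell_\tau}(\Nu)=\tau(\Nu_{n+1})_+$, so the identity to prove is
\[
[\,f+h_{\ell_\tau}\,]=[f]+\ell_\tau,\qquad f\coloneqq h_{K^u+\ell_T}+t\bar\zeta,
\]
and this is \emph{not} an instance of the translation identity of Remark~\ref{rem:tr_Wulff}. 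For a general $f$ and a segment $L$ one only has $[f+h_L]\supseteq[f]+L$; equality can fail, and indeed fails here for small $T$ whenever the ``upper cap'' of the Wulff shape cuts into the lower boundary. This is also why the statement reads ``there exists $T>0$'' rather than ``for all $T$'': under your interpretation no threshold on $T$ would be needed.

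The paper's argument is genuinely different. It decomposes the intersection defining $[h_{K^u+\ell_T}+t\bar\zeta]$ according to the three regions $\sph^n_-$, $\sph^n_+$, and the equator: the lower hemisphere yields $\epi(u_t)$, the upper hemisphere yields (the hypograph of) a vertically reflected copy $-u_t+c+T$, and the equator yields the cylinder $[h_{\dom(u)}+t\rho_\zeta]\times\R$. Lemma~\ref{lemma:domain} guarantees that $u_t$ has a finite maximum on its (compact) domain, so for $T$ large the graph of $-u_t+c+T$ lies strictly above the graph of $u_t$; in this regime the upper and lower pieces decouple, the equatorial constraint is redundant, and increasing $T$ by $\tau$ simply shifts the upper piece up by $\tau$, which is precisely the Minkowski sum with $\ell_\tau$.
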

	\begin{proof}
		First,notice that for every $v: \R^n \to \R$ \[\epi(v^*)=\bigcap_{\Nu \in \sph^n_-}H^-_{\Nu} \left( \frac{v(g(\Nu))}{\sqrt{1+|g(\Nu)|^2}}\right).\] Then, for $T\geq 0$ \[[h_{K^u+\ell_{T}}+t\overline{\zeta}]=\epi(u_t)\cap \epi(-u_t+c+T) \cap [h_{\dom(u)}+ t \rho_{\zeta}]\times \R\] for a suitable constant $c \in \R$. 
		
		By Lemma \ref{lemma:domain} $u_t$ admits a global finite maximum on its domain (since the latter is compact). In particular, choosing $T$ large enough (independently on $t \in [0,\varepsilon]$ by compactness) $u_t < -u_t+c+T$, and \[[h_{K^u+\ell_{T}}+t\overline{\zeta}]=\epi((u^*+t\zeta))\cap \epi(-u_t+c+T),\] since they share the same domain. Noticing that $\ell_{T+\tau}=\ell_T+ \ell_\tau$ for $\tau>0$, standard computations (see, for example, \cite[Lemma 3.1.1]{schneider_2013}) conclude the proof.
	\end{proof}\noindent
	We therefore obtain the following immediate corollary, which justifies the terminology of functional Wulff shape.
	\begin{corollary}\label{cor:wulff}
		Let $u \in \Coco$, $\zeta \in C_{rec}(\R^n)$, and $t \in [0,\varepsilon]$ for $\varepsilon>0$ sufficiently small. Then, for every $T>0$ sufficiently large, \[u_t=\lfloor [h_{K^u+\ell_{T}}+t \overline{\zeta}]  \rfloor.\]
	\end{corollary}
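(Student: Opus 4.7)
The plan is to read off $u_t$ directly from the lower boundary of the Wulff shape, exploiting the explicit representation obtained inside the proof of Lemma \ref{lemma:ex_claim}. That proof establishes, for $T$ sufficiently large and uniformly in $t \in [0,\varepsilon]$, the identity
\[
[h_{K^u+\ell_T}+t\overline{\zeta}] = \epi(u_t) \cap \left\{(x,z) \in \R^{n+1} : z \leq -u_t(x) + c + T\right\},
\]
so the Wulff shape is the compact convex body sandwiched between the graph of $u_t$ below and the concave graph of $-u_t+c+T$ above, both confined to the cylinder over $\dom(u_t)=[h_{\dom(u)}+t\rho_\zeta]$ furnished by Lemma \ref{lemma:domain}.

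From here I would apply the operator $\lfloor \cdot \rfloor$ defined in \eqref{eq:low_bound_func}, which by definition returns, at each $x \in \R^n$, the infimum of the vertical fiber of the body above $x$. For $x \in \dom(u_t)$, this fiber is the interval $[u_t(x),\, -u_t(x)+c+T]$, whose infimum is $u_t(x)$; for $x \notin \dom(u_t)$ the fiber is empty, so $\lfloor \cdot \rfloor$ returns $+\infty$, which again matches $u_t(x)$. Combining the two cases yields the pointwise identity $u_t = \lfloor [h_{K^u+\ell_T}+t\overline{\zeta}] \rfloor$ on all of $\R^n$, and since $u_t \in \Coco$ by Lemma \ref{lemma:domain}, the corollary follows.

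I do not anticipate a genuine obstacle here, as the corollary is essentially a rewriting of the content of Lemma \ref{lemma:ex_claim} in the function-valued picture. The only mildly delicate point is to verify that $T$ can be chosen large enough so that $u_t < -u_t+c+T$ holds throughout $\dom(u_t)$ simultaneously for all $t \in [0,\varepsilon]$; this uniform choice is ensured by the compactness of $\dom(u_t)$ (Lemma \ref{lemma:domain}) together with the uniform boundedness of $u_t$ on its domain, which is precisely the compactness argument already used in the proof of Lemma \ref{lemma:ex_claim}.
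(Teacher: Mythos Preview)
Your proposal is correct and matches the paper's intent: the corollary is stated there as an immediate consequence of Lemma \ref{lemma:ex_claim} with no separate proof given, and your argument---reading off the lower boundary of the explicit representation $[h_{K^u+\ell_T}+t\overline{\zeta}]=\epi(u_t)\cap\{z\le -u_t+c+T\}$ established inside that lemma---is precisely the unwinding the paper has in mind. The uniform choice of $T$ over $t\in[0,\varepsilon]$ that you flag is likewise already handled in the proof of Lemma \ref{lemma:ex_claim}, so nothing further is needed.
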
\noindent
    \begin{remark}\label{remark:domain}
        Our strong assumptions on $\rho$ are required precisely to obtain Corollary \ref{cor:wulff}. Indeed this guarantees by Lemma \ref{lemma:domain} a control over the evolving domain of $u_t$. Without this assumption, pathological phenomena could arise where $\dom(u_t)$ is not compact for every $t>0$. In the explicit calculations for the first variation of $\mu_n(u_t)$, this would imply that the boundary term would appear on the derivative only for $t=0$. Since we use a continuity argument to infer our formulas, we are not currently able to tackle this difficulty.
    \end{remark}
	
	The core ideas for the proof of our main result are encoded in the following properties of Wulff shapes proved by Willson \cite{WulffSemigroup}. Consider $K \in \K^{n+1}, t\geq0$ and $f \in C(\mathbb{S}^{n})$; we use the notation $F_t K$ for the Wulff shape of $h_K+t f$, that is 
	\begin{equation}\label{eq:wulff_alternative}
		F_t K\coloneqq [h_K+tf],
	\end{equation}
	The dependence on a function $f$ will always be clear from the context. Theorems 5.1 and 5.6 from \cite{WulffSemigroup} read as follows. 
	\begin{theorem}\label{continuity}
		If $K_m \to K$ in $\K^{n+1}$, $t_m \to t_0$ in $\R$ and $F_{t_0} K$ has non-empty interior, then $F_{t_m} K_m$ has non-empty interior for $m$ large and $F_{t_m} K_m \to F_{t_0} K$ in $\K^{n+1}$.
	\end{theorem}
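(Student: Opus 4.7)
The plan is to recast the statement through support functions and apply Blaschke's selection theorem. Writing $g_m \coloneqq h_{K_m} + t_m f$ and $g \coloneqq h_K + t_0 f$, so that by definition $F_{t_m} K_m = [g_m]$ and $F_{t_0} K = [g]$, the Hausdorff convergence $K_m \to K$ is equivalent to uniform convergence $h_{K_m} \to h_K$ on $\sph^n$, and together with $t_m \to t_0$ and the continuity of $f$ on the compact sphere this yields $g_m \to g$ uniformly on $\sph^n$.

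The first step would be to prove the non-empty interior assertion. Fix $x_0 \in \mathrm{int}([g])$, so that there exists $\varepsilon > 0$ with $B(x_0, \varepsilon) \subset [g]$, which by \eqref{eq:equiv_wulff} reads $x_0 \cdot \Nu + \varepsilon \leq g(\Nu)$ for every $\Nu \in \sph^n$. For $m$ large enough that $\|g_m - g\|_\infty < \varepsilon/2$, the estimate $x_0 \cdot \Nu + \varepsilon/2 \leq g_m(\Nu)$ holds uniformly in $\Nu$, hence $B(x_0, \varepsilon/2) \subset [g_m]$. This shows simultaneously that $F_{t_m} K_m$ is non-empty and has non-empty interior for $m$ large.

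For the Hausdorff convergence, the uniform convergence of $g_m$ also provides a uniform bound $\|g_m\|_\infty \leq M$, so that $[g_m] \subset M\,\overline{B(0,1)} \subset \R^{n+1}$. Blaschke's selection theorem then supplies, for any subsequence, a Hausdorff-convergent sub-subsequence $[g_{m_k}] \to A \in \K^{n+1}$, and the proof is completed by identifying $A = [g]$ via double inclusion. From $h_{[g_{m_k}]} \leq g_{m_k}$ and uniform convergence of both sides (the left-hand side because Hausdorff convergence of convex bodies entails uniform convergence of support functions), one obtains $h_A \leq g$, whence $A \subset [g]$. Conversely, every $x_0 \in \mathrm{int}([g])$ lies in $[g_{m_k}]$ for $k$ large by the first step, hence in the closed Hausdorff limit $A$; since $[g]$ is a convex set with non-empty interior one has $[g] = \overline{\mathrm{int}([g])} \subset A$. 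The uniqueness of the cluster point then forces the full sequence $[g_m]$ to converge to $[g]$ in $\K^{n+1}$.

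The main obstacle is the inclusion $[g] \subset A$: without the non-empty interior hypothesis, $[g]$ could be lower-dimensional and the bodies $[g_m]$ might degenerate in a way that allows $A \subsetneq [g]$. The hypothesis is precisely what forces $[g]$ to coincide with the closure of its interior, so that one can approximate $[g]$ from within by points that are eventually contained in every $[g_m]$; removing it would require a genuinely different argument, as illustrated by simple one-dimensional examples where a slight downward perturbation of a constant support function annihilates the Wulff shape.
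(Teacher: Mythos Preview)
Your argument is correct. Note, however, that the paper does not give its own proof of this statement: it is quoted verbatim as Theorems~5.1 and~5.6 of Willson~\cite{WulffSemigroup}, so there is nothing in the present paper to compare against. What you have written is essentially a clean, self-contained reconstruction of the standard proof via Blaschke selection and the support-function characterization~\eqref{eq:equiv_wulff} of Wulff shapes. The key observations---that uniform convergence $g_m\to g$ transfers an interior ball of $[g]$ into $[g_m]$ with half the radius, and that the non-empty interior hypothesis is what lets you recover $[g]\subset A$ by approximating $[g]$ from its interior---are exactly the right ones, and your closing paragraph correctly isolates why the hypothesis cannot be dropped.
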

	In particular, Theorem \ref{continuity} implies that $F_t K$ is continuous in $t$.
	\begin{theorem}\label{semigroup}
		Let $s$ and $t$ be nonnegative real numbers. Let $K \in \K^{n+1}_{n+1}, f \in C(\mathbb{S}^{n-1})$ and assume $F_t K$ has non-empty interior. Then \[F_s F_t K=F_{s+t} K\]
	\end{theorem}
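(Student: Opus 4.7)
The plan is to establish the two inclusions $F_s F_t K \subseteq F_{s+t} K$ and $F_{s+t} K \subseteq F_s F_t K$ separately, using throughout the characterization \eqref{eq:equiv_wulff} of the Wulff shape as the maximal convex body whose support function is pointwise bounded by the defining function.

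The first inclusion follows immediately from the monotonicity of the Wulff shape operation in its defining function. By the definition of $F_t K$, we have $h_{F_t K} \leq h_K + tf$; adding $sf$ to both sides gives $h_{F_t K} + sf \leq h_K + (s+t)f$, and applying $[\,\cdot\,]$ yields $F_s F_t K = [h_{F_t K} + sf] \subseteq [h_K + (s+t)f] = F_{s+t} K$.

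The reverse inclusion is the main step. After dispatching the trivial cases $s=0$ and $t=0$, I would take $s, t > 0$ and fix $y \in F_{s+t} K$ and $z \in K$. The key idea is a Minkowski-type interpolation: form the convex combination $\lambda y + (1-\lambda) z$ with $\lambda = t/(s+t)$, so that a short calculation based on $y \cdot \nu \leq h_K(\nu) + (s+t)f(\nu)$ and $z \cdot \nu \leq h_K(\nu)$ produces $(\lambda y + (1-\lambda) z) \cdot \nu \leq h_K(\nu) + tf(\nu)$ for every $\nu$, placing this interpolation in $F_t K$. Taking the supremum over $z \in K$ then gives $\lambda (y \cdot \nu) + (1-\lambda) h_K(\nu) \leq h_{F_t K}(\nu)$, and after rearranging and using $h_{F_t K}(\nu) - h_K(\nu) \leq tf(\nu)$, one obtains $y \cdot \nu \leq h_{F_t K}(\nu) + sf(\nu)$ for all $\nu$, hence $y \in F_s F_t K$.

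The main subtlety lies in identifying the interpolation parameter $\lambda = t/(s+t)$, which is forced by the requirement that $\lambda(s+t) = t$ in order to absorb the coefficient of $f(\nu)$ into the $F_t K$ constraint; once this choice is made, the remaining steps are purely algebraic. The hypothesis that $F_t K$ has non-empty interior ensures that $h_{F_t K}$ is the genuine support function of a convex body, so the manipulations above are well-defined, while the case $F_{s+t} K = \emptyset$ makes the reverse inclusion vacuous.
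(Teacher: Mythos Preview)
Your proof is correct. The paper itself does not prove this statement: it is quoted verbatim as Theorem~5.6 of Willson~\cite{WulffSemigroup} and used as a black box. Your argument supplies an elementary self-contained proof: the inclusion $F_sF_tK\subseteq F_{s+t}K$ is immediate from monotonicity of the Wulff construction, and for the reverse you observe that $\frac{t}{s+t}\,y+\frac{s}{s+t}\,K\subseteq F_tK$ for every $y\in F_{s+t}K$, pass to support functions, and combine with $h_{F_tK}\le h_K+tf$ to deduce $y\cdot\Nu\le h_{F_tK}(\Nu)+sf(\Nu)$.

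One small comment: your remark that the hypothesis ``$F_tK$ has non-empty interior'' is needed so that $h_{F_tK}$ is a genuine support function is not quite right---mere non-emptiness already guarantees that. Your argument in fact only uses $F_tK\neq\emptyset$ (and $K\neq\emptyset$, which holds since $K\in\K^{n+1}_{n+1}$), so you have proved a marginally stronger statement than the one quoted.
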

	These properties will be very useful later to obtain differentiability in $t$ for the measure of $F_t K$, since by Lemma \ref{lemma:cont_bod}, all these properties are passed on to functional Wulff shapes. 

    \begin{remark}\label{remark:HJ}
        We conclude this section with the following observation. It is well-known (see, for example, \cite[Theorem 2.5]{HopfLSC}), that if $u \in \Conv(\R^n)$ and $\zeta \in C(\R^n)$, then \[w(t,x)\coloneqq (u^*+t\zeta)^*(x)\] is a viscosity solution (in the Crandall-Lions sense) of the Hamilton-Jacobi equation
            \begin{align*}
                \begin{cases}
                    \frac{\partial}{\partial t}w(t,x)+\zeta(\nabla_x w(t,x))=0, \quad &(t,x) \in  \mathrm{int}(\dom(w(t,x))),\\
                    w(0,x)=u(x) \quad &  x \in  \dom(u).
                \end{cases}
            \end{align*}
            As such, it enjoys many properties, including a semi-group property of the type in Theorem \ref{semigroup}. 
            Under our additional assumptions, we are able to describe precisely $\dom(w(t,x))$ thanks to Lemma \ref{lemma:domain}. We could use the previous result to compute pointwise the derivative of $w(t,x)$ on the interior of its domain, while separately computing the boundary term. Yet, to do this directly would require a non-smooth Reynolds-type transport theorem, which is precisely how Theorem \ref{gen_al} later can be interpreted. 
    \end{remark}
	
	\subsection{Weighted Brunn-Minkowski theory}\label{sec:measure_theoretic}
	
	Formula \eqref{eq:intro_mine} is an application of our results, which are built on a wider framework. Indeed, \eqref{eq:logconc_vol} can be interpreted as the measure $\mu$ on $\R^{n+1}$ of the epigraph of $u$, where $d\mu(x,z)=e^{-z}dxdz, x\in \R^n, z \in \R$. In recent years has drawn much interest the so-called weighted Brunn-Minkowski theory, where one considers instead of $\V$ a generic measure $\mu$ with continuous density $\Psi$ with respect to the Lebesgue measure. See, for example, Livshyts \cite{LivMi}, Alonso-Gutierrez, Hern\'andez Cifre, Roysdon, Yepes Nicolàs, and Zvavitch \cite{GutRoEtAl}, Kryvonos and Langharst \cite{DylLyu}, and Fradelizi, Langharst, Madiman and Zvavitch \cite{Weighted_I}. We note that the mentioned papers concern compact convex sets, while recently Schneider \cite{schneider_weighted} explored these ideas in the context of pseudocones, which, in our language, are epigraphs of functions in a suitable subfamily of $\Conv(\R^n)$. 
	
	We consider the following generalized notion of surface area measure in \eqref{surfacearea}, that is, the weighted surface area measure, which is defined as \[ S_{\mu, K}(B)=\int_{N_K^{-1}(B) }\Psi(X)\,d\mathcal{H}^n(X)\] for every $K \in \K^n$ and Borel set $B \subset \sph^n$. A core ingredient of our approach is Lemma 2.7 from \cite{DylLyu}, which generalizes Aleksandrov's variational lemma {\cite[Lemma 7.5.3]{schneider_2013}}.
	\begin{lemma}[Kryvonos and Langharst]\label{alex}
		Let $\mu$ be a Borel measure on $\R^{n+1}$ with continuous density $\Psi$ with respect to the Lebesgue measure. Then for $f \in C(\mathbb{S}^{n})$ and $K \in \K^{n+1}_{n+1}$, we have
		\begin{equation}\label{eq:variation_DylLyu}
			\left.\frac{d}{dt}\mu([h_K+tf])\right|_{t=0}=\int_{\mathbb{S}^{n}}f(\Nu)\,dS_{\mu,K}(\Nu)=\int_{\partial K} f(N_K(X)) \Psi(X)\, d\haus^n(X).
		\end{equation}
	\end{lemma}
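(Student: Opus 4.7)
My plan is to follow the classical strategy for Aleksandrov's variational lemma, adapted to the weighted setting: first establish the formula on polytopes by a direct slab-by-slab computation, and then pass to a general $K \in \K^{n+1}_{n+1}$ by Hausdorff approximation, using the continuity of the Wulff shape operation (Theorem \ref{continuity}) together with the continuity of the density $\Psi$.

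For the polytope case, write $P = \bigcap_{i=1}^N H^{-}_{\nu_i}(h_P(\nu_i))$ with facets $F_i \subset \{X \cdot \nu_i = h_P(\nu_i)\}$, so that the weighted surface area measure is the discrete measure $S_{\mu, P} = \sum_{i=1}^N \bigl(\int_{F_i} \Psi \, d\haus^n\bigr)\delta_{\nu_i}$. For $|t|$ sufficiently small, $[h_P + tf]$ remains combinatorially equivalent to $P$, obtained by displacing each facet normally by $tf(\nu_i)$. Decomposing the symmetric difference $[h_P+tf] \triangle P$ as a union of thin slabs of thickness $|tf(\nu_i)|$ over the $F_i$, modulo a codimension-two correction near the lower skeleton of measure $O(t^2)$, a Fubini computation (integrate first in the $\nu_i$-direction) yields
\[
\mu([h_P+tf]) - \mu(P) = t \sum_{i=1}^N f(\nu_i) \int_{F_i} \Psi \, d\haus^n + o(t),
\]
which is precisely the claimed formula at the polytope $P$.

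For a general $K \in \K^{n+1}_{n+1}$, pick polytopes $P_m \to K$ in Hausdorff metric. For each small $t$, Theorem \ref{continuity} gives $[h_{P_m}+tf] \to [h_K+tf]$, and since $\Psi$ is continuous and the sets are eventually contained in a common compact ball, $\mu([h_{P_m}+tf]) \to \mu([h_K+tf])$. On the right-hand side, weak continuity of classical surface area measures combined with continuity of $\Psi$ implies weak convergence $S_{\mu,P_m} \to S_{\mu,K}$, hence $\int f \, dS_{\mu,P_m} \to \int f \, dS_{\mu,K}$.

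The main obstacle is exchanging $\lim_{m\to\infty}$ with $\lim_{t\to 0}$: pointwise convergence in $t$ alone does not transfer differentiability to the limiting object. I would resolve this by establishing a quantitative remainder estimate
\[
\left| \mu([h_{P_m}+tf]) - \mu(P_m) - t \int_{\sph^n} f \, dS_{\mu, P_m} \right| \leq C\, t^2,
\]
valid uniformly in $m$, where $C$ depends only on a fixed compact neighborhood of $K$, the supremum and modulus of continuity of $\Psi$ on it, and $\|f\|_\infty$. This follows by tracking the second-order term in the slab expansion of the polytope step: on a slab of thickness $|tf(\nu_i)|$ over $F_i$, the deviation $\Psi(X)-\Psi(\pi_{F_i}(X))$ is controlled by the modulus of continuity of $\Psi$, contributing $O(t^2)$ after integration. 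Once this uniform bound is secured, the two-sided derivative of $\mu([h_K+tf])$ at $t=0$ exists and equals $\int_{\sph^n} f\, dS_{\mu,K}$; the second equality in \eqref{eq:variation_DylLyu} is then immediate from the definition of $S_{\mu,K}$.
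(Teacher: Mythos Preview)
First, note that the paper does not actually prove this lemma: it is quoted as Lemma~2.7 of Kryvonos and Langharst \cite{DylLyu} and used as a black box, the only addition being Remark~\ref{rem:tr_Al}, which removes the hypothesis $o\in\mathrm{int}\,K$. So there is no proof in the text to compare your argument against.

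On the merits of your sketch, the polytope--then--approximate strategy is classical and can be made to work, but the justification you give for the crucial uniform remainder estimate has a genuine gap. The modulus-of-continuity bound you invoke for the slab term yields
\[
\sum_i \int_{\mathrm{slab}_i}\bigl|\Psi(X)-\Psi(\pi_{F_i}X)\bigr|\,dX \;\le\; \mathcal{H}^n(\partial P_m)\cdot t\|f\|_\infty\cdot\omega_\Psi\bigl(t\|f\|_\infty\bigr),
\]
which is $o(t)$ uniformly in $m$ (since $\mathcal{H}^n(\partial P_m)\to\mathcal{H}^n(\partial K)$), but \emph{not} $O(t^2)$ unless $\Psi$ is Lipschitz; so your stated rate is wrong, though $o(t)$ uniform in $m$ would still suffice. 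The real problem is the codimension-two correction: you assert it is $O(t^2)$ with a constant independent of the combinatorics of $P_m$, yet give no argument. As $P_m\to K$ the number of facets and ridges typically diverges, and a naive per-ridge bound summed over all ridges can blow up. The reason the total corner error does stay uniformly $O(t^2)$ is a quermassintegral fact---it is controlled by the second-order term in the Steiner expansion of $V(P_m+tB)$, hence by $W_2(P_m)$, which is Hausdorff-continuous---after which one multiplies by $\sup|\Psi|$ on a fixed compact. None of this appears in your sketch, and it is precisely the delicate point. A cleaner way to close the argument, avoiding remainder estimates altogether, is the mechanism the paper itself uses later (Corollary~\ref{cor:alex}, proof of Theorem~\ref{gen_al}): once the formula holds at each polytope, the semigroup property (Theorem~\ref{semigroup}) gives $\frac{d}{dt}\mu([h_{P_m}+tf])=\int_{\sph^n}f\,dS_{\mu,[h_{P_m}+tf]}$ for all small $t$, and these derivatives converge \emph{uniformly in $t$} by weak continuity of $S_{\mu,\cdot}$ together with Theorem~\ref{continuity}; Lemma~\ref{thm:lim_der} then transfers differentiability to the limit.
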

	\begin{remark}\label{rem:tr_Al}
		Lemma \ref{alex} was originally formulated with the additional assumption $o \in \mathrm{int}K$, which was unnecessary since the boundary structure of a compact convex set is invariant under translations. Thus, both the set and the measure can be suitably translated so that \eqref{eq:variation_DylLyu} still holds. For completeness, and since it will be a useful remark later, we provide a proof of this fact. Suppose that the origin is not contained in $K$. For every point $Y$ in the interior of $K$, $h_{K}-\ell_Y>0$, where $\ell_Y(X)=Y \cdot X, X \in \R^{n+1}$, and $K-Y$ has the origin in its interior. Then, by \eqref{eq:trans_wulff}, we can consider the Wulff shape $[h_K-\ell_Y+tf]$ to infer by Lemma \ref{alex} \[\left.\frac{d}{dt}\mu([h_K-\ell_Y+tf])\right|_{t=0}=\int_{\partial (K-Y)} f(N_{K-Y}(X)) \Psi(X)\, d\haus^n(X).\] 
		Consider $\tilde{\Psi}(\cdot)=\Psi(\cdot +Y)$ and the measure $\tilde{\mu}$ which has $\tilde{\Psi}$ as density. By Lemma \ref{alex}, the translation invariance of the Hausdorff measure, and since $N_{K-Y}(X)=N_K(X+Y)$,
		\begin{align*}
			\left.\frac{d}{dt}\mu([h_K+tf])\right|_{t=0}=&\left.\frac{d}{dt}\tilde{\mu}([h_K-\ell_Y+tf])\right|_{t=0^+}=\\
			\int_{\partial (K-Y)} f(N_{K-Y}(X)) \tilde{\Psi}(X)\, d\haus^n(X)=&\int_{\partial K} f(N_{K}(Z)) \tilde{\Psi}(Z-Y)\, d\haus^n(Z)\\=& \int_{\partial K} f(N_{K}(Z)) \Psi(Z)\, d\haus^n(Z),
		\end{align*}
		proving that in Lemma \ref{alex}, we do not need the origin to belong to the interior of $K$.
	\end{remark}
	
	We now show that the weighted volume is continuous for compact convex sets with non-empty interior, and the weighted surface area measure is weakly continuous. The latter fact follows implicitly, for continuous densities, from the stability result by Livshyts \cite[Proposition A.3]{LivMi}. For our applications, we include the possibility of working with singular densities. Our ambient space is still $\R^{n+1}$ with the formalism introduced before, where we write $X \in \R^{n+1}$ as $X=(x,z), x \in \R^n\equiv H, z \in \R\equiv \mathrm{span}\{e_{n+1}\}$.
	\begin{lemma}\label{lemma:weak_continuity}
		For $q>0$ and $\Psi \in C(\R^{n+1})$ non-negative, consider the measure $\mu$ on $\R^{n+1}$ with density $d\mu(X)=\Psi(X)|x|^{q-n}dxdz$, and a sequence $(K_m) \subset \K^{n+1}$ converging to $K \in \K^{n+1}_{n+1}$ in the Hausdorff metric. In case $q<n$, suppose moreover that $o \in \mathrm{pr}_H\left(\mathrm{int} K \cap \bigcap_{m \in \N} \mathrm{int}K_m \right)$, where $\mathrm{pr}_H$ is the orthogonal projection on $H\equiv \R^n$. Then
		\begin{itemize}
			\item The volume is continuous (this conclusion is trivial for $q \geq n$), that is, \[\mu(K_m)\to \mu(K),\]
			\item The sequence of measures $S_{\mu,K_m}$ converges weakly to $S_{\mu,K}$.
		\end{itemize} 
	\end{lemma}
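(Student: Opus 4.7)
The plan is to split the argument according to whether the density is continuous or genuinely singular. For $q\ge n$, the weight $\Psi(X)|x|^{q-n}$ is continuous on $\R^{n+1}$, so Livshyts' stability \cite[Proposition A.3]{LivMi} applies directly and yields both the volume continuity and the weak convergence of surface area measures with no further work. For $q<n$ the weight blows up along the axis $\{x=0\}$, yet remains locally integrable because $\int_{|x|\le R}|x|^{q-n}\,dx<\infty$ whenever $q>0$; this is the case where the hypothesis on $o$ is genuinely needed, and where I concentrate the argument.

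For volume continuity, Hausdorff convergence places every $K_m$ and $K$ inside a fixed compact set $C\subset\R^{n+1}$, and $\mathbf{1}_{K_m}\to\mathbf{1}_K$ almost everywhere because $\partial K$ has Lebesgue measure zero. Dominating by $\mathbf{1}_C\,\|\Psi\|_{L^\infty(C)}\,|x|^{q-n}$, which is integrable by the above, dominated convergence gives $\mu(K_m)\to\mu(K)$.

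For the weak convergence of $S_{\mu,K_m}$ I would use a truncation combined with a uniform estimate. Introduce the continuous density
\[\Psi_\varepsilon(X)\coloneqq\Psi(X)\min\{|x|^{q-n},\varepsilon^{q-n}\},\]
and let $\mu_\varepsilon$ be the associated measure. For each fixed $\varepsilon>0$, Livshyts' result applied to $\mu_\varepsilon$ gives $S_{\mu_\varepsilon,K_m}\to S_{\mu_\varepsilon,K}$ weakly as $m\to\infty$. The remaining task is to estimate, uniformly in $m$, the excess mass $S_{\mu,K_m}-S_{\mu_\varepsilon,K_m}$, which is supported on $\partial K_m\cap\{|x|<\varepsilon\}$, and to show it vanishes as $\varepsilon\to 0$. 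Once this is done, splitting $\int f\,dS_{\mu,K_m}-\int f\,dS_{\mu,K}$ via the triangle inequality through the $\mu_\varepsilon$-measures and sending first $m\to\infty$ and then $\varepsilon\to 0$ closes the argument.

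The uniform estimate is the main obstacle, and it is precisely here that the hypothesis $o\in\mathrm{pr}_H(\mathrm{int}K\cap\bigcap_m\mathrm{int}K_m)$ is used in an essential way. It supplies a point $(0,z_0)$ in the interior of $K$ and of every $K_m$; Hausdorff convergence takes care of the tail of the sequence while the interiority of $(0,z_0)$ in each of the finitely many initial $K_m$'s handles the rest, yielding one radius $r_0>0$ with $B_{r_0}((0,z_0))\subset K\cap\bigcap_m K_m$. This common ball bounds the oscillation of $\lfloor K_m\rfloor$ and $\lceil K_m\rceil$ on $B_{r_0/2}(o)\subset H$ and, by convexity, gives a uniform Lipschitz constant $L$ on $B_{r_0/4}(o)$. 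Parameterizing the upper and lower pieces of $\partial K_m\cap\{|x|<\varepsilon\}$ over $B_\varepsilon(o)$ by these functions and applying the area formula \eqref{change1}, for $\varepsilon<r_0/4$,
\[\int_{\partial K_m\cap\{|x|<\varepsilon\}}\Psi(X)\,|x|^{q-n}\,d\haus^n(X)\le 2\|\Psi\|_{L^\infty(C)}\sqrt{1+L^2}\int_{B_\varepsilon(o)}|x|^{q-n}\,dx=C_1\,\varepsilon^q,\]
independently of $m$, and the same bound holds for $K$. Without the hypothesis on $o$ one could not exclude a sequence $K_m$ whose intersection with the singular axis shrinks to a point, in which case the Lipschitz constants, and hence the weighted boundary mass near $\{x=0\}$, would blow up along the sequence.
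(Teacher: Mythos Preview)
Your argument is correct but takes a genuinely different route from the paper's. The paper does not truncate or invoke Livshyts' stability as a black box; instead it first translates (vertically, using the hypothesis on $o$) so that $o\in\mathrm{int}K\cap\bigcap_{m\ge M}\mathrm{int}K_m$, and then works entirely in polar coordinates on $\R^{n+1}$. For volumes this writes $\mu(K_m)$ as a spherical integral of $|\mathrm{pr}_H(\Nu)|^{q-n}$ against a radial integral $\int_0^{r_{K_m}(\Nu)}\Psi(t\Nu)t^q\,dt$, which converges uniformly by uniform convergence of $r_{K_m}$; the key step is showing $\int_{\sph^n}|\mathrm{pr}_H(\Nu)|^{q-n}\,d\Nu<\infty$ directly. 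For surface areas the paper uses the formula from \cite[Lemma~2.9]{OPstuff} to express $\int f\,dS_{\mu,K_m}$ as a spherical integral involving $r_{K_m}$ and the radial Gauss map $\alpha_{K_m}$, then applies dominated convergence using the a.e.\ convergence $\alpha_{K_m}\to\alpha_K$ and the same integrable spherical weight. Your approach has the advantage of cleanly isolating the singularity along $\{x=0\}$ via a graph parametrization and delegating the continuous-density part to an existing result; the paper's radial argument is self-contained and handles both conclusions with a single dominating weight on $\sph^n$, at the cost of importing the convergence of radial Gauss maps from \cite{OPstuff}. One minor point: Livshyts' Proposition~A.3 concerns weighted surface area measures, not volumes, so for $q\ge n$ the volume continuity still needs the dominated-convergence step you wrote out for $q<n$ (which of course applies equally well).
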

	\begin{proof}
		First, notice that without loss of generality, we can suppose $o \in \left(\mathrm{int} K \cap \bigcap_{m \geq M} \mathrm{int}K_m \right)$ for $M$ sufficiently large. Indeed, if $q\geq n$, we can follow Remark \ref{rem:tr_Al}. If $0<q<n$, thanks to the additional hypothesis $o \in \mathrm{pr}_H\left(\mathrm{int} K \cap \bigcap_{m \in \N} \mathrm{int}K_m \right)$ we just need a vertical translation, which will affect only $\Psi$ proceeding as in the same remark.
		
		Under this assumption, there exists $\delta>1$ such that
		\begin{equation*}\label{eq:unif_bodies}
			\frac{1}{\delta}B^n \subset K_m \subset \delta B^n
		\end{equation*}
		for every $m$ sufficiently large. This chain of inclusions is equivalent to
		\begin{equation}\label{eq:ineq_unif}
			\frac{1}{\delta}<r_{K_m}<\delta
		\end{equation}
		for $m$ every sufficiently large, and since $K_m \to K$ in the Hausdorff metric the sequence of radial functions $r_{K_m}$ converges uniformly to $r_K$.	Moreover, for $f \in C(\sph^n)$ we have that $\alpha_{K_m}(\Nu)\to \alpha_K(\Nu)$ for almost every $\Nu \in \sph^n$ with respect to the spherical Lebesgue measure.
		
		By direct computations using polar coordinates, 
		\begin{align*}
			\mu(K_m)=&\int_{\sph^n} \int_0^{r_{K_m}(\Nu)} \psi(t\Nu)|\mathrm{pr}_H(t\Nu)|^{q-n}t^n\, dtd\Nu\\=&\int_{\sph^n} |\mathrm{pr}_H(\Nu)|^{q-n} \left( \int_0^{r_{K_m}(\Nu)} \psi(t\Nu) t^q\, dt \right)d\Nu.
		\end{align*}
		The radial functions $r_{K_m}$ are continuous and converge uniformly to $r_K$. Therefore \[\int_0^{r_{K_m}(\Nu)} \psi(t\Nu) t^q\, dt \to \int_0^{r_{K}(\Nu)} \psi(t\Nu) t^q\, dt\] uniformly, and the continuity of $\mu$ is proved once we show that 
		\begin{equation}\label{eq:funny_integral}
			\int_{\sph^n}|\mathrm{pr}_H(\Nu)|^{q-n}\, d\Nu < +\infty.
		\end{equation}
		Consider indeed the representation \[\int_{\sph^n}|\mathrm{pr}_H(\Nu)|^{q-n}\, d\Nu=2\int_{\sph^n_{-}}|\mathrm{pr}_H(\Nu)|^{q-n}\, d\Nu.\] By \eqref{change1} \[\int_{\sph^n_{-}}|\mathrm{pr}_H(\Nu)|^{q-n}\, d\Nu=\int_{B^n}\frac{|x|^{q-n+1}}{\sqrt{1-|x|^2}}\, dx=\int_{\sph^{n-1}}\int_0^1\frac{t^{q}}{\sqrt{1-t^2}}\, dt d\Nu,\] which is finite by direct computations. Thus, we have \eqref{eq:funny_integral}.
		
		For the second statement, we adapt the argument in \cite[Theorem 3.4]{Zhao_Gauss}. By the definition of weighted surface area measure and \cite[Lemma 2.9]{OPstuff}, we infer 
		\begin{align*}
			\int_{\sph^n}f(\Nu)\, dS_{\mu,K_m}(\Nu)=\int_{\partial K_m} f(N_K(X))\Psi(X)|x|^{q-n}\, d\haus^n((x,z))\\=\int_{\sph^n} \frac{f(\alpha_{K_m}(\Nu))}{\Nu\cdot \alpha_{K_m}(\Nu)}\Psi(r_{K_m}(\Nu)\Nu )|\mathrm{pr}_H(r_{K_m}(\Nu)\Nu)|^{q-n}r_{K_m}^n(\Nu)\, d\Nu.
		\end{align*}
		By \eqref{eq:ineq_unif} and since $f$ and $\Psi$ are continuous, there exists $C>0$ such that 
		\[\frac{f(\alpha_{K_m}(\Nu))}{\Nu\cdot \alpha_{K_m}(\Nu)}\Psi(r_{K_m}(\Nu)\Nu )|\mathrm{pr}_H(r_{K_m}(\Nu)\Nu)|^{q-n}r_{K_m}^n(\Nu)\leq \delta^{n+1+(q-n)\mathrm{sign}(q-n)}C |\mathrm{pr}_H(\Nu)|^{q-n}\] for every $m$ and for almost every $\Nu \in \sph^n$, where $\mathrm{sign}(\tau)=\tau/|\tau|$ if $0\neq \tau \in \R$ and $\mathrm{sign}(0)=0$. Notice that the expression on the left converges almost everywhere to \[\frac{f(\alpha_{K}(\Nu))}{\Nu\cdot \alpha_{K}(\Nu)}\Psi(r_{K}(\Nu)\Nu )|\mathrm{pr}_H(r_{K}(\Nu)\Nu)|^{q-n}r_{K}^n(\Nu).\]
		
		By \eqref{eq:funny_integral} and a direct application of the dominated convergence theorem, the proof is concluded.
	\end{proof}\noindent
	Notice that the estimates provided in the proof imply that for every $K \in \K^{n+1}$ such that $o \in \mathrm{pr}_H(\mathrm{int} K)$ the values $\mu(K)$ and $S_{\mu, K}(\sph^n)$ are finite.
	
	We now apply Lemma \ref{lemma:weak_continuity} to show that Lemma \ref{alex} holds for the measures we are considering. For the convenience of the reader, we recall the following classical result (see, for example, \cite[Theorem 7.17]{baby_rudin}).
	\begin{lemma}\label{thm:lim_der}
		Suppose $f_m:[a,b]\to \R, m \in \N$ is a sequence of functions, differentiable on $[a,b]$ and such that $f_m(x_0)$ converges for some $x_0 \in [a,b]$. If the derivatives $f'_m$ converge uniformly on $[a,b]$, then $f_m$ converges uniformly on $[a,b]$, to a function $f$, and \[f'(x)=\lim_{m\to \infty} f'_m(x)\] for every $x \in [a,b]$.
	\end{lemma}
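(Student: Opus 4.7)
The plan is to first establish uniform convergence of the sequence $(f_m)$ on $[a,b]$, and then to identify the limit $f$ as differentiable with derivative given by the pointwise limit of $f'_m$. The single tool that drives both steps is the mean value theorem applied not to individual $f_m$, but to differences $f_m - f_k$: this converts uniform control of the derivatives into uniform control of the functions (and of their difference quotients).

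For the uniform convergence, I would fix an arbitrary $x \in [a,b]$ and apply the mean value theorem to $f_m - f_k$ on the interval joining $x_0$ and $x$, yielding
\[|f_m(x) - f_k(x) - (f_m(x_0) - f_k(x_0))| \leq (b-a) \sup_{\xi \in [a,b]} |f'_m(\xi) - f'_k(\xi)|.\]
Combined with convergence of $(f_m(x_0))$ at the single point $x_0$ and the uniform Cauchy property of $(f'_m)$ coming from the uniform convergence hypothesis, this shows $(f_m)$ is uniformly Cauchy on $[a,b]$, and hence converges uniformly to some $f$.

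For the identification of the derivative, fix $x\in[a,b]$ and introduce the difference quotients
\[\phi_m(t) \coloneqq \frac{f_m(t)-f_m(x)}{t-x}, \qquad \phi(t) \coloneqq \frac{f(t)-f(x)}{t-x}, \quad t\in[a,b]\setminus\{x\}.\]
A second application of the mean value theorem to $f_m-f_k$ between $x$ and $t$ gives $|\phi_m(t)-\phi_k(t)|\leq \sup_\xi |f'_m(\xi)-f'_k(\xi)|$, so $(\phi_m)$ is uniformly Cauchy on $[a,b]\setminus\{x\}$ and converges uniformly to $\phi$ there. Each $\phi_m$ extends continuously to $t=x$ with value $f'_m(x)$, and the numerical sequence $(f'_m(x))$ converges by hypothesis; the classical theorem on exchange of limits for uniformly convergent sequences of continuous functions then gives that $\phi$ admits a limit at $x$ equal to $\lim_{m\to\infty} f'_m(x)$. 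By definition of derivative, this limit is exactly $f'(x)$.

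The only delicate point is the interchange of the limits $t\to x$ and $m\to\infty$ in the last step; everything else is a direct consequence of the mean value theorem. Since the machinery required is entirely elementary and the statement is quoted from a standard reference, no genuine obstacle is expected.
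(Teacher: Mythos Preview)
Your proposal is correct and follows the standard argument. Note that the paper does not actually prove this lemma: it merely quotes it as a classical result with a reference to \cite[Theorem 7.17]{baby_rudin}, and your outline is precisely the proof given there.
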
\noindent
	We treat only the case $0<q<n$, since for $q\geq n$ the following is a direct consequence of Lemma \ref{alex}.
	\begin{corollary}\label{cor:alex}
		Let $\mu$ be a Borel measure on $\R^{n+1}$ with density $d\mu(X)=\Psi(X)|x|^{q-n}dxdz$, $0<q<n$. Then for $f \in C(\mathbb{S}^{n})$ and $K \in \K^{n+1}_{n+1}$ such that $o \in \mathrm{pr}_H(\mathrm{int}K)$, we have
		\begin{equation*}
			\left.\frac{d}{dt}\mu([h_K+tf])\right|_{t=0}=\int_{\mathbb{S}^{n}}f(\Nu)\,dS_{\mu,K}(\Nu).
		\end{equation*}
	\end{corollary}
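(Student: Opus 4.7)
The plan is to approximate the singular weight $|x|^{q-n}$ by continuous weights, apply Lemma \ref{alex} to the smoothed measures, and pass to the limit via Lemma \ref{thm:lim_der}. By the vertical-translation argument of Remark \ref{rem:tr_Al}, which preserves $\mathrm{pr}_H$ and $|x|^{q-n}$ while only modifying $\Psi$, I may assume $o \in \mathrm{int}K$. Theorems \ref{continuity} and \ref{semigroup} then produce $\varepsilon_0 > 0$ and $\delta > 1$ with $(1/\delta)B^{n+1} \subset K_t \subset \delta B^{n+1}$ and $[h_{K_t} + sf] = K_{t+s}$ whenever $|t|, |t+s| \leq \varepsilon_0$, where $K_t := [h_K + tf]$.

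Fix a continuous $\phi_\varepsilon : [0, \infty) \to [0, 1]$ with $\phi_\varepsilon \equiv 0$ on $[0, \varepsilon/2]$ and $\phi_\varepsilon \equiv 1$ on $[\varepsilon, \infty)$, let $\mu_\varepsilon$ be the measure with continuous density $X \mapsto \Psi(X)\phi_\varepsilon(|x|)|x|^{q-n}$, and set $F_\varepsilon(t) := \mu_\varepsilon(K_t)$ and $F(t) := \mu(K_t)$. The semigroup identity together with Lemma \ref{alex} applied at each base $K_t$ (combined with weak continuity of the smoothed surface area measure for the left-sided limit) yields
\[
F_\varepsilon'(t) = \int_{\sph^n} f(\Nu)\, dS_{\mu_\varepsilon, K_t}(\Nu)
\]
for every $|t| < \varepsilon_0$. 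Dominated convergence, justified since $\Psi(X)|x|^{q-n}$ is integrable on $\delta B^{n+1}$ by \eqref{eq:funny_integral}, gives $F_\varepsilon(t) \to F(t)$ pointwise on $(-\varepsilon_0, \varepsilon_0)$.

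The crucial step is uniform convergence of $F_\varepsilon'$ to the candidate $G(t) := \int_{\sph^n} f \, dS_{\mu, K_t}$. Writing both in the polar form used in the proof of Lemma \ref{lemma:weak_continuity}, their difference has an integrand supported on $\{\Nu \in \sph^n : |\mathrm{pr}_H(r_{K_t}(\Nu)\Nu)| \leq \varepsilon\} \subset \{\Nu : |\mathrm{pr}_H(\Nu)| \leq \delta\varepsilon\}$, and is dominated by $C|\mathrm{pr}_H(\Nu)|^{q-n}$ via the uniform bounds $1/\delta \leq r_{K_t} \leq \delta$ and $\Nu \cdot \alpha_{K_t} \geq 1/\delta^2$ almost everywhere, with $C$ depending only on $\sup_{\sph^n}|f|$ and $\sup_{\delta B^{n+1}} \Psi$. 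Hence
\[
\sup_{|t| < \varepsilon_0} |G(t) - F_\varepsilon'(t)| \leq C \int_{\{\Nu \in \sph^n : |\mathrm{pr}_H(\Nu)| \leq \delta\varepsilon\}} |\mathrm{pr}_H(\Nu)|^{q-n}\, d\Nu \longrightarrow 0
\]
as $\varepsilon \to 0$, by absolute continuity of the integral together with \eqref{eq:funny_integral}. Lemma \ref{thm:lim_der} then gives $F'(t) = G(t)$ on $(-\varepsilon_0, \varepsilon_0)$, and evaluation at $t = 0$ concludes the argument.

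The principal obstacle is promoting pointwise to uniform convergence of the derivatives $F_\varepsilon'$: this requires the uniform two-sided enclosure of $K_t$ between concentric balls (Theorem \ref{continuity}) together with an Aleksandrov-type identity valid at every $t \in (-\varepsilon_0, \varepsilon_0)$, not only at $t = 0$, which the semigroup property (Theorem \ref{semigroup}) supplies by allowing the base to be shifted along the family $K_t$.
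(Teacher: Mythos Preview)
Your argument is correct and shares the paper's overall skeleton: translate so that $o\in\mathrm{int}K$, regularize the singular density, apply Lemma~\ref{alex} to the regularized measures, use the semigroup property (Theorem~\ref{semigroup}) to get the derivative at every $t$ rather than just at $t=0$, show uniform convergence of these derivatives, and conclude via Lemma~\ref{thm:lim_der}.

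The genuine difference lies in the uniform-convergence step. The paper truncates the singularity (replacing $|x|^{q-n}$ by $(|x|\vee 1/m)^{q-n}$), splits $f=f^{+}-f^{-}$, notes that each of $\int_{\sph^n} f^{\pm}\,dS_{\mu_m,K_t}$ increases monotonically in $m$ to a limit that is continuous in $t$ by Lemma~\ref{lemma:weak_continuity}, and then invokes Dini's theorem to upgrade pointwise to uniform convergence. You instead express both $F_\varepsilon'(t)$ and $G(t)$ in the polar form from the proof of Lemma~\ref{lemma:weak_continuity} and bound their difference directly by a tail integral over the shrinking cap $\{\Nu:|\mathrm{pr}_H(\Nu)|\leq\delta\varepsilon\}$, which tends to zero uniformly in $t$ by~\eqref{eq:funny_integral}. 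Your route is more explicit and gives a rate; the paper's Dini argument is shorter but relies on having first established continuity of the limiting map $t\mapsto S_{\mu,K_t}$.

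One caution: Theorem~\ref{semigroup} is stated only for nonnegative parameters, so your asserted identity $[h_{K_t}+sf]=K_{t+s}$ for all $|t|,|t+s|\leq\varepsilon_0$ is not available when $t<0$. The paper avoids this by working on $[0,\varepsilon]$ and arguing that a continuous function with continuous right derivative is differentiable; you can do the same, and recover the left derivative at $0$ by applying the whole argument with $-f$ in place of $f$.
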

	\begin{proof}
		Again, without loss of generality, we can suppose that $o \in \mathrm{int}K$. Consider the sequence of measures $\mu_m$ defined by $d\mu_m(x,z)=\Psi(X)(|x|\wedge m)^{q-n}$, where $a\wedge b\coloneqq \min\{a,b\}$ for $a,b \in \R$. Then, for every $m$, the density of $\mu_m(x,z)$ with respect to the Lebesgue measure is a continuous function on $\R^{n+1}$. Moreover, since $o \in \mathrm{int}K$, by Theorem \ref{continuity} there exist $\varepsilon>0$ such that $o \in \mathrm{int}[F_t K]$ for every $t\in [0,\varepsilon]$, where $F_t K=[h_K+tf]$. In particular, $\mu_m(K_t)$ is continuous in $[0,\varepsilon]$ for every $m$. 
		
		By Theorem \ref{semigroup}, for every $t_0\in [0,\varepsilon)$ and $t>0$ sufficiently small \[\frac{\mu_m(F_{t_0+t}K)-\mu_m(F_t K)}{t}=\frac{\mu_m(F_{t}F_{t_0} K)-\mu_m(F_{t_0} K)}{t}.\] Therefore, Lemma \ref{alex} shows that $\mu_m(F_t K)$ has continuous right derivative on $[0,\varepsilon]$. Since $\mu_m(K_t)$ is continuous as well, it must be differentiable in $[0,\varepsilon]$ for every $m$.
		
		Consider the decomposition $f=f^+ - f^-$, where $f^+=(f \vee 0)$ and $f^-=-(f \wedge 0)$. Then \[\int_{\mathbb{S}^{n}}f^+(\Nu)\,dS_{\mu_m,K}(\Nu) \to \int_{\mathbb{S}^{n}}f^+(\Nu)\,dS_{\mu,K}(\Nu)\] increasingly and pointwise in $t \in [0,\varepsilon]$. The same holds if we replace $f^+$ with $f^-$. 
		By Lemma \ref{lemma:weak_continuity} the two limits are continuous and bounded, and therefore \[\int_{\mathbb{S}^{n}}f(\Nu)\,dS_{\mu_m,K}(\Nu) \to \int_{\mathbb{S}^{n}}f(\Nu)\,dS_{\mu,K}(\Nu)\] uniformly in $t$ (see, for example, \cite[Theorem 7.13]{baby_rudin}).
		
		The proof is concluded since we satisfy all the hypotheses of Lemma \ref{thm:lim_der} and thus \[\lim_{m\to \infty}\lim_{t \to 0} \frac{\mu_m([h_K+tf])-\mu_m(K)}{t}=\lim_{t \to 0} \lim_{m\to \infty} \frac{\mu_m([h_K+tf])-\mu_m(K)}{t}.\]
	\end{proof}
	
	\subsection{Proof of the variational formula}
	
	We are finally ready to prove our core result. It reads as follows. 
	\begin{theorem}\label{gen_al}
		Let $u \in \Coco$ such that $\mathrm{int}(\dom(u))\neq \emptyset$, and $\zeta \in C_{rec}(\R^n)$. Consider, moreover, a measure $\mu$ on $\R^{n+1}$ such that $d\mu(z,x)=\phi(z)\psi(x)dz \,dx$ with non-negative functions $\phi \in C(\R)\cap L^1([a,+\infty))$ and $\psi \in C(\R^n)$ for some $a \in \R$ such that $\phi(z)\to 0$ as $z \to +\infty$. Then 
		\begin{align*}
			\left. \frac{d}{dt} \mu(u_t)\right|_{t=0}&=\\
			\int_{\dom(u)}\zeta(\nabla u(x))\phi(u(x))\psi(x)\, dx +\int_{\partial \dom(u)}& \rho_{\zeta}(N_{\dom(u)}(y))\Phi(u(y))\psi(x)\, d\mathcal{H}^{n-1}(y),
		\end{align*}
		exists and is finite, where $\Phi(t)=\int_t^{+\infty} \phi(z)dz$.
	\end{theorem}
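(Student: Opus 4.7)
The strategy is to realize $\mu(u_t)$ as the limit, as $T\to\infty$, of the $\mu$-measure of a family of compact convex Wulff shapes in $\R^{n+1}$ coming from Corollary~\ref{cor:wulff}, and then to apply Lemma~\ref{alex} combined with a uniform-convergence argument via Lemma~\ref{thm:lim_der}. Concretely, for $T$ sufficiently large and $t\in[0,\varepsilon]$, the Wulff shape $L_t^{(T)}\coloneqq [h_{K^u+\ell_T}+t\overline{\zeta}]$ is, by the proof of Lemma~\ref{lemma:ex_claim}, a compact convex body in $\R^{n+1}$ with non-empty interior (using $\mathrm{int}(\dom(u))\neq\emptyset$), lower boundary equal to the graph of $u_t$, and upper boundary equal to the graph of $-u_t+c+T$. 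Slicing the measure along the vertical direction gives
\[
\mu(u_t) = \mu(L_t^{(T)}) + \int_{\dom(u_t)} \psi(x)\,\Phi(-u_t(x)+c+T)\,dx,
\]
and the remainder term vanishes uniformly in $t$ as $T\to\infty$, by the uniform boundedness of $\dom(u_t)$ coming from Lemma~\ref{lemma:domain}, the continuity of $\psi$, and $\Phi(s)\to 0$ as $s\to+\infty$.

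Since $\phi(z)\psi(x)$ is continuous on $\R^{n+1}$, Lemma~\ref{alex} applied to $L_t^{(T)}=F_t L_0^{(T)}$ yields
\[
\frac{d}{dt}\mu(L_t^{(T)})\bigg|_{t=0} = \int_{\partial L_0^{(T)}}\overline{\zeta}(N_{L_0^{(T)}}(X))\,\phi(z)\psi(x)\,d\haus^n(X).
\]
I would split $\partial L_0^{(T)}$ into the lower graph of $u$, the upper graph of $-u+c+T$, and the vertical side $\partial\dom(u)\times[u(y),-u(y)+c+T]$. Using the area formula \eqref{eq:change3}, the definition of $\overline{\zeta}$ through the gnomonic projection and its symmetric extension to $\sph^n_+$, together with the identification $\overline{\zeta}|_{\sph^{n-1}}=\rho_\zeta$ on the equator, these three pieces contribute respectively
\[
\int_{\dom(u)}\zeta(\nabla u(x))\phi(u(x))\psi(x)\,dx,\qquad \int_{\dom(u)}\zeta(\nabla u(x))\phi(-u(x)+c+T)\psi(x)\,dx,
\]
and
\[
\int_{\partial\dom(u)}\rho_\zeta(N_{\dom(u)}(y))\psi(y)\bigl[\Phi(u(y))-\Phi(-u(y)+c+T)\bigr]d\haus^{n-1}(y).
\]
Letting $T\to\infty$, the second contribution vanishes and the third converges to the claimed boundary term.

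To interchange $\lim_{T\to\infty}$ with $\frac{d}{dt}|_{t=0}$, I would invoke Lemma~\ref{thm:lim_der}: the semigroup property (Theorem~\ref{semigroup}) allows the same decomposition at an arbitrary $t\in[0,\varepsilon]$, so $\frac{d}{dt}\mu(L_t^{(T)})$ is continuous in $t$, and the same uniform estimates show that it converges uniformly on $[0,\varepsilon]$ as $T\to\infty$ to the sum of the lower and side contributions evaluated at time $t$. Combined with the uniform convergence of $\mu(L_t^{(T)})$ to $\mu(u_t)$, this yields the right derivative of $\mu(u_t)$ at $0$ as stated; the left derivative follows by running the same argument with $-\zeta\in C_{rec}(\R^n)$ in place of $\zeta$ (noting $\rho_{-\zeta}=-\rho_\zeta$), so the two-sided derivative exists and equals the claimed formula. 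The main technical obstacle will be establishing the uniform convergence in $t$ of $\frac{d}{dt}\mu(L_t^{(T)})$, which requires equicontinuity control of $u_t$ and its gradient on the uniformly compact domains of Lemma~\ref{lemma:domain}, together with Theorem~\ref{continuity}, Lemma~\ref{lemma:cont_bod}, and the weak continuity of weighted surface measures from Lemma~\ref{lemma:weak_continuity}.
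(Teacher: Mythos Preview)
Your proposal is correct and follows essentially the same route as the paper: approximate $\epi(u_t)$ by the compact Wulff shapes $[h_{K^u+\ell_T}+t\overline{\zeta}]$, apply Lemma~\ref{alex}, split the resulting boundary integral into the lower graph, upper graph, and vertical side, and then pass to the limit using Lemma~\ref{thm:lim_der} together with Theorems~\ref{continuity}--\ref{semigroup} and Lemma~\ref{lemma:weak_continuity}. The only cosmetic differences are that the paper indexes the exhaustion by $m\in\N$ (writing $K_{m,t}=[h_{K^u+\ell_{T+m}}+t\overline{\zeta}]$, which by Lemma~\ref{lemma:ex_claim} is exactly your $L_t^{(T+m)}$) and obtains the two-sided derivative from continuity of the right derivative rather than by replacing $\zeta$ with $-\zeta$.
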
\noindent
	Notice that, under these assumptions, the measure of the epigraph of $u$ is finite for every $u \in \Coco$.
 \begin{remark}
     The assumption $d\mu(z,x)=\phi(z)\psi(x)dzdx$ is a simplification we adopt to lighten the notation. The interested reader can easily repeat the calculations in the hypotheses $d\mu(z,x)=\Psi(z,x)dzdx$ as in Remark \ref{rem:tr_Al}, adapting them to the proof of Theorem \ref{gen_al}.
 \end{remark}
	
	As anticipated, our strategy is to work between convex sets and convex functions. To perform this passage formally, we introduce the family 
	\begin{align*}
		&\K^{n+1}_{+}\coloneq \\&\{ K \subset \R^{n+1}: K \text{ is convex, closed, with nonempty interior, and }\text{pr}_H (K)\in \K^n_n\}.
	\end{align*}
	Notice that this family includes precisely $\K^{n+1}_{n+1}$ and the epigraphs of the functions in $\Coco$ with $n$-dimensional domain, where $\R^n\equiv H=e_{n+1}^\perp$. Let $K,L \in \K^{n+1}_{+}$ and let $\mu$ be a measure on $\R^{n+1}$. The $\mu$-symmetric-difference between $K$ and $L$ is \[d_\mu(K,L)\coloneq \mu(K \Delta L)=\int_{\R^{n+1}} \chi_{K \Delta L}(x)\,d\mu(x), \] where for a measurable set $A$ we denote by $\chi_A$ is characteristic function. In particular, if we use the Gaussian measure $\gamma_{n+1}$ on $\R^{n+1}$, defined by its density \[d\gamma_{n+1}(x)=\frac{1}{\sqrt{2\pi}^{n+1}}e^{-|x|^2/2}\,dx,\] then $d_{\gamma_{n+1}}$ defines a metric on $\K^{n+1}_{+}$. This follows from the convexity of the involved sets and the sets being of dimension $n+1$. Notice that $d_{\gamma_{n+1}}(K,L)\in [0,1]$ for every $K, L \in \K^{n+1}_{+}$. This kind of metric was studied in a wider generality by Li and Mussnig \cite{MetMuss}. Here we provide a proof that $d_{\gamma_{n+1}}$ induces an appropriate metric on $\K^{n+1}_{+}$, that is, we can approximate functions in $\Coco$ by convex bodies in $\K^{n+1}_{n+1}$. 
	\begin{lemma}\label{lemma:topology_approx}
		Let $\gamma_{n+1}$ be the Gaussian measure on $\R^{n+1}$. The function $d_{\gamma_{n+1}}:\K^{n+1}_{+}\times \K^{n+1}_{+} \to [0,1]$ is a distance. Moreover, its restrictions to $\K^{n+1}_{n+1}$ and the family of epigraphs of functions in $\Coco$ induce the topology of the Hausdorff metric and epi-convergence, respectively.
	\end{lemma}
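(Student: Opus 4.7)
The plan is to verify the metric axioms, then the topological equivalence on $\K^{n+1}_{n+1}$, and finally reduce the epigraph case to the compact case via truncation. Symmetry, non-negativity, and the triangle inequality follow immediately from the set identity $K\Delta L\subset (K\Delta M)\cup (M\Delta L)$. The only delicate metric axiom is positive-definiteness: if $K, L \in \K^{n+1}_+$ are distinct, pick $p \in K \setminus L$ without loss of generality; Hahn--Banach separation of the compact convex $\{p\}$ from the closed convex set $L$ yields an open half-space $H^-$ containing $p$ and disjoint from $L$. Given an interior point $q$ of $K$, the convex combinations $(1-t)q + tp$ sit in $\mathrm{int}(K)$ for $t < 1$ and lie in $H^-$ for $t$ close to $1$, producing a nonempty open subset of $K \setminus L$ of positive Gaussian measure.

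For the Hausdorff direction on $\K^{n+1}_{n+1}$: if $K_m \to K$ in Hausdorff distance, then $\chi_{K_m} \to \chi_K$ pointwise outside $\partial K$ (a Lebesgue-null set), and dominated convergence (with the bounded Gaussian density as majorant) gives $d_{\gamma_{n+1}}(K_m, K) \to 0$. The converse is the delicate direction. I first argue that $(K_m)$ is eventually uniformly bounded: if, along a subsequence, $p_m \in K_m$ with $|p_m| \to \infty$, then $\gamma_{n+1}(K \cap K_m) \to \gamma_{n+1}(K) > 0$ provides two points in $K \cap K_m$ at mutual distance bounded below; the triangle spanned by these two points and $p_m$ lies in $K_m$, and for $|p_m|$ large the slab of this triangle just beyond $\partial K$ in the direction of $p_m$ has width close to the base length and therefore Gaussian measure bounded below by a positive constant independent of $m$, contradicting $d_{\gamma_{n+1}}(K_m, K) \to 0$. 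Once uniform boundedness is in hand, Blaschke's selection theorem extracts a Hausdorff-convergent subsequence whose limit equals $K$ by the first direction combined with positive-definiteness, and the standard subsequence principle promotes this to convergence of the full sequence.

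For epigraphs in $\Coco$, I reduce to the compact case by truncation. For $u \in \Coco$ and $M > \max u$, the truncation $\epi(u) \cap \{z \leq M\}$ lies in $\K^{n+1}_{n+1}$. Lemma \ref{epiconv} together with Lemma \ref{lemma:cont_bod} and the construction \eqref{eq:func_to_bod} implies that $u_m \to u$ epi-convergently in $\Coco$ is equivalent to Hausdorff convergence of these truncated epigraphs for every $M$ exceeding the (eventually uniform) bound on $\max u_m$, noting that such a uniform bound exists along any epi-convergent sequence because the sublevel sets converge in Hausdorff distance by Lemma \ref{epiconv}. Since $\gamma_{n+1}(\dom(u) \times [M, +\infty)) \to 0$ as $M \to \infty$ and the domains stabilize in a common compact set, the Gaussian mass of the tail is uniformly negligible, and both directions of the epigraph equivalence reduce to the compact case just treated.

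The main obstacle is the uniform-boundedness step in the second paragraph: a convex body $K_m$ can extend far in a single direction while contributing very little Gaussian mass beyond $K$, so one must exploit the positive-measure overlap with $K$ to extract a genuinely thick triangular piece of $K_m \setminus K$ whose Gaussian content does not collapse as $|p_m| \to \infty$; the quantitative width estimate for triangles with distant apex is what prevents the cone from becoming Gaussian-negligible.
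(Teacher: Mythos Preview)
Your overall strategy is the reverse of the paper's. The paper first handles epigraphs by invoking the analogous result of Li--Mussnig \cite{MetMuss}, and then reduces the compact case to it via a dimension-lift: Hausdorff convergence $K_m\to K$ in $\K^{n+1}_{n+1}$ is equivalent (by Lemma~\ref{epiconv}) to epi-convergence of the indicators $I_{K_m}\to I_K$ as functions on $\R^{n+1}$, hence to $d_{\gamma_{n+2}}$-convergence of their epigraphs in $\R^{n+2}$, and one checks directly that $d_{\gamma_{n+2}}(\epi(I_{K_m}),\epi(I_K))=\tfrac12\,d_{\gamma_{n+1}}(K_m,K)$. Your route---compact case by hand, then epigraphs via truncation---is in principle workable and more self-contained, but as written it has real gaps.

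First, in the uniform-boundedness step the ``triangle spanned by two points and $p_m$'' is a two-dimensional simplex and carries zero $(n{+}1)$-dimensional Gaussian measure once $n\geq 2$; you need a full-dimensional cone, for instance the convex hull of $p_m$ with a small ball $B_\rho\subset K\cap K_m$ (such a ball of uniform radius exists because $K\cap K_m$ is a convex subset of the fixed bounded set $K$ with volume bounded below). Second, and more seriously, both auxiliary claims in your epigraph reduction are false: along an epi-convergent sequence in $\Coco$ neither $\max u_m$ nor $\dom(u_m)$ need be uniformly bounded. Take $u=I_{[0,1]}$ on $\R$ and let $u_m$ be $0$ on $[0,1]$, affine from $0$ to $m^2$ on $[1,m]$, and $+\infty$ outside $[0,m]$; then every sublevel set $\{u_m\leq t\}$ converges to $[0,1]$, so $u_m\to u$ by Lemma~\ref{epiconv}, yet $\max u_m=m^2$ and $\dom(u_m)=[0,m]$. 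Your truncation scheme can be rescued by a tail estimate that does not rely on these claims---for every $v\in\Coco$ one has $\gamma_{n+1}\bigl(\epi(v)\cap\{z>M\}\bigr)\leq \gamma_{n+1}(\R^n\times(M,\infty))=\gamma_1((M,\infty))$, which is uniform in $v$---but the justification you give does not stand.
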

	\begin{proof}
		The equivalence of epi-convergence and convergence with respect to $d_{\gamma_{n+1}}$ can be proved analogously to Theorem 1.2 in \cite{MetMuss}.
		
		To prove that $d_{\gamma_{n+1}}$ induces the Hausdorff metric on $\K^{n+1}_{n+1}$, we use the first part of the proof as follows: by Lemma \ref{epiconv}, the Hausdorff convergence of a sequence $\K_m \in \K^{n+1}_{n+1}$ to some $\K \in \K^{n+1}_{n+1}$ is equivalent to the epi-convergence of $I_{K_m}$ to $I_K$ as functions on $\R^{n+1}$. However, as we just proved, this is equivalent to the convergence of $\epi(I_{K_m})$ to $\epi(I_K)$ with respect to $d_{\gamma_{n+2}}$. Direct calculations show that \[d_{\gamma_{n+2}}(\epi(I_{K_m}),\epi(I_K))=Cd_{\gamma_{n+1}}(K_m,K)\] for some absolute constant $C>0$, concluding the proof.
	\end{proof}
	
	With these instruments at hand, we can start the proof of Theorem \ref{gen_al}.
	\begin{proof}[Proof of Theorem \ref{gen_al}]
		Let us first sketch the outline of the proof. We start by appropriately re-writing the variational formula \eqref{eq:variation_DylLyu} with $K=K^v+\ell_T, T>0$ for some $v \in \Coco$, $\Psi(x,z)=\phi(z)\psi(x)$, and $f=\bar{\zeta}$ as defined in \eqref{eq:zeta_proj}. The choice of $T$ is given by Lemma \ref{lemma:ex_claim}. For $\tilde{v}=\lceil K^v+\ell_T \rceil=\lceil K^v \rceil +T$, direct computations using \eqref{change1} show
		\begin{equation}\label{eq:big_bad_wolf}
			\begin{split}
				& \lim_{t \to 0} \frac{\mu([h_{K^v}+t\bar{\zeta}])-\mu(K^v )}{t}=\int_{\partial (K^v+\ell_T)} \bar{\zeta}(N_{K^v}((x,z)))\phi(z)\psi(x)\, d\haus^n((x,z))\\
				& = \int_{\dom(v)}\zeta(\nabla v(x))\phi(v(x))\psi(x)\,dx\\ & + \int_{\dom(v)} \zeta(\nabla v(x))\phi(\tilde{v}(x))\psi(x)\,dx\\ &+ \int_{\partial \dom(v)}\bar{\zeta}((N_{\dom(v)}(x),0))\left(\int_{v(x)}^{\tilde{v}(x)} \phi(s)\,ds \right) \psi(x)\,d\mathcal{H}^{n-1}(x).
			\end{split}
		\end{equation}
		In particular, notice that $\bar{\zeta}$ restricted to the equator is by definition $\rho_\zeta$. We have used the following fact: Since $v$ and $\tilde{v}$ are one a suitable reflection of the other, $\nabla v(x)=-\nabla \tilde{v}(x)$ for every $x$ where the gradient exists. For $v$, the normal to the epigraph expressed through the gradient is $(\nabla v,-1)/\sqrt{1+|\nabla v|^2}$, while for $\tilde{v}$ we have $(-\nabla \tilde{v},1)/\sqrt{1+|\nabla \tilde{v}|^2}$. Moreover, by construction $\bar{\zeta}\left( N_{K^v}(x,z) \right)=\bar{\zeta}\left( R_H(N_{K^v}(x,z)) \right)$. Since \[\bar{\zeta}\left( R_H(N_{K^v}(x,z)) \right)=\bar{\zeta}\left( \frac{-\nabla \tilde{v}(x),1}{\sqrt{1+|\nabla \tilde{v}(x)|^2}}\right)=\bar{\zeta}\left( \frac{\nabla v(x),1}{\sqrt{1+|\nabla v(x)|^2}}\right)=\zeta(\nabla v(x)), \] we have the integral on the third line of \eqref{eq:big_bad_wolf}.
		
		Recall that for $u \in \Coco, \zeta \in C_{rec}(\R^n)$ and $t>0$ sufficiently small we write $u_t \coloneqq (u^*+t\zeta)^*$. The idea is to approximate $\epi(u_t)$ with a suitable sequence of convex bodies in order to obtain our claim as a limit of the integrals of the form \eqref{eq:big_bad_wolf}. Then, we conclude by checking the hypotheses required to apply Lemma \ref{thm:lim_der}. The proof is structured in three steps after introducing the sequence approximating $\epi(u_t)$.
		
		Clearly, dim$(\epi(u))=n+1$. Consider now the segment $\ell_m=\{se_{n+1}\colon s \in [0,m]\}$ for $m \in \N$. For $T$ chosen as in Lemma \ref{lemma:ex_claim} for $u$ and $\zeta$, $K^{u}+\ell_T+\ell_m \to \epi (u)$ in the symmetric-difference distance $d_{\gamma_{n+1}}$ as $m \to \infty$. Consider the sequence of Wulff shapes $K_{m,t}\coloneq F_t(K^u+\ell_T+\ell_m)=[h_{K^u+\ell_T+\ell_m}+t\bar{\zeta}]$. Notice that $K_{m,0}$ is monotonic with respect to inclusion, and thus the sequence $\mu(K_{m,0})$ is increasing. By the properties of the sequence $K_{m,0}$ and the measure $\mu$ it follows from the monotone convergence theorem that	\[\lim_{m \to \infty} \mu(K_{m,0})=\mu(\epi(u_0))=\mu(\epi(u)).\] Moreover, notice that the sequence of sets $K_{m,t}$ converges to $\epi(u_t)$ in the symmetric-difference distance topology monotonically. In particular, Lemma \ref{lemma:ex_claim} implies that $K_{m,t}=K^{u,t}+\ell_m$, where $K^{u,t}=F_t(K^u+\ell_T)=[h_{K^u+\ell_T}+t\overline{\zeta}]$.
		
		\textit{Step 1}(Limit as $t\to 0$). Define now $u_m\coloneq \lfloor K^u+\ell_m \rfloor$ and $\tilde{u}_m\coloneq \lceil K^u+\ell_m \rceil+T$. Notice that $u_m=u$ and $\tilde{u}_m=\tilde{u}+m$, where $\tilde{u}=\lceil K^u \rceil+T$. Then, replacing $K^v$ with $K^u+\ell_m$ in \eqref{eq:big_bad_wolf}, we infer 
		\begin{equation}\label{three}
			\begin{split}
				& \lim_{t \to 0} \frac{\mu(K_{m,t})-\mu(K^u+\ell_T+\ell_m )}{t}=\int_{\partial (K^u+\ell_{T+m})} f(N_{K^u+\ell_{T+m}}((x,z)))\phi(z)\psi(x)\, d\haus^n((x,z))\\
				& = \int_{\dom(u)}\zeta(\nabla u(x))\phi(u(x))\psi(x)\,dx\\ & + \int_{\dom(u)} \zeta(\nabla u(x))\phi(\tilde{u}(x)+m)\psi(x)\,dx\\ &+ \int_{\partial \dom(u)}\rho_\zeta((N_{\dom(u)}(x))\left(\int_{u(x)}^{\tilde{u}(x)+m} \phi(s)\,ds \right) \psi(x)\,d\mathcal{H}^{n-1}(x).
			\end{split}
		\end{equation}
		
		Notice that for $m$ fixed, the integrals in the last three lines of \eqref{three} are all finite since they are just a suitable decomposition of the right-hand side of the first line. We claim that the right-hand side of \eqref{three} converges to \[\int_{\dom(u)}\zeta(\nabla u(x))\phi(u(x))\psi(x)\,dx+\int_{\partial \dom(u)} \rho_{\zeta}(N_{\dom(u)}(y))\Phi(u(y))\psi(x)\,d\mathcal{H}^{n-1}(y) \] as $m \to \infty$. The first integral is determined by the graph of $u$ and thus is fixed by the sequence we are considering. Concerning the second one, since $\tilde{u}_m=\tilde{u}+m$ is the parametrization of the upper part of $\partial(K^u+\ell_m)$, we infer $\lim_{m \to \infty} \tilde{u}_m(x)=+\infty$ for every $x \in \dom(u)$. Therefore $\lim_{m \to \infty}\phi(\tilde{u}_m(x))=0$ for every $x \in \dom(u)$ since $\phi$ converges to $0$ by hypothesis. Then, 
		\begin{align*}
			&\left|  \int_{\dom(u)} \zeta(\nabla u(x))\phi(\tilde{u}(x)+m)\psi(x)\,dx \right| \leq \\ &  \max_{x \in \dom(u)}\phi(\tilde{u}(x)+m) \int_{\dom(u)} \left| \zeta(\nabla u(x))\psi(x)\right|\,dx  .
		\end{align*}
		Since the integral on the right-hand side of the inequality is finite (it is part of the weighted surface area measure of $K^u$ when in Lemma \ref{alex} we consider $\Psi(x,z)=\psi(x)$) and \[ \max_{x \in \dom(u)}\phi(\tilde{u}(x)+m) \leq \sup_{y \in [\inf \tilde{u}+m,+\infty ]}\phi(y) \to 0\] as $m \to \infty$, the integral on the third line of \eqref{three} converges to $0$.
		
		We conclude this step observing that \[ \int_{\partial \dom(u)}\rho_{\zeta}(N_{\dom(u)}(x))\left(\int_{u(x)}^{\tilde{u}_m(x)} \phi(s)\,ds \right) \psi(x)\,d\mathcal{H}^{n-1}(x) \] converges to \[ \int_{\partial \dom(u)}\rho_{\zeta}(N_{\dom(u)}(x))\Phi(u(x)) \psi(x)\,d\mathcal{H}^{n-1}(x)\] as $m \to \infty$. Indeed $\tilde{u}_m(x) \to \infty$ as $m \to \infty$, thus \[\int_{u(x)}^{\tilde{u}_m(x)} \phi(s)\, ds \to \int_{u(x)}^{+\infty} \phi(s)\,ds=\Phi(u(x)) \] increasingly, and the limit is finite by hypothesis. Then since
		\begin{align*}
			\left| \rho_{\zeta}(N_{\dom(u)}(x))\left(\int_{u(x)}^{\tilde{u}_m(x)} \phi(s)\,ds \right) \psi(x) \right|  \leq    \left|\rho_{\zeta}(N_{\dom(u)}(x)) \psi(x) \right| \Phi(u(x)),
		\end{align*}
		the dominated convergence theorem grants the desired convergence.
		
		\textit{Step 2}(The derivative exists). We now prove that $\mu(K_{m,t})$ is differentiable with respect to $t$ for each $t$ in $[0,\varepsilon]$ for every $m \in \N$ and $\varepsilon$ suitably small, and its derivative converges uniformly as $m \to \infty$ on $[0,\varepsilon]$ to \[ \int_{\dom(u_t)}\zeta(\nabla u_t(x))\phi(u_t(x))\psi(x)\,dx+\int_{\partial \dom(u_t)}\rho_{\zeta}(N_{\dom(u)}(x))\Phi( u_t(x))\psi(x)\,d\mathcal{H}^{n-1}(x).\] For $m$ fixed, the differentiability of $\mu(K_{m,t})$ follows from Theorem \ref{semigroup}. Indeed this theorem implies \[ \frac{\mu(F_{t+t_0}(K^u+\ell_{T+m}))-\mu(F_{t_0}(K^u+\ell_{T+m}))}{t}= \frac{\mu(F_tF_{t_0}(K^u+\ell_{T+m}))-\mu(F_{t_0}(K^u+\ell_{T+m}))}{t},\] and as long as $K_{m,t_0}$ has non-empty interior we can apply Lemma \ref{alex} and have, for $t_0 \in [0,\varepsilon]$, 
		\begin{equation}\label{derivative}
			\frac{d \mu(K_{m,t})}{dt}\Big|_{t=t_0}=\int_{\mathbb{S}^{n}}\bar{\zeta}(\Nu)\,dS_{\mu,K_{m,t_0}}(\Nu). 
		\end{equation} 
		Notice that $K^u+\ell_m$ has non-empty interior for every $m$ since $K^u+\ell_1 \subset K^u+\ell_m$. Thus, we can choose $\varepsilon$ such that \eqref{derivative} remains true for every $m$ and for every $t_0 \in [0,\varepsilon]$. Since $K_{m,t}$ is continuous in $t$ by Theorem \ref{continuity} and $\,dS_{\mu,K_{m,t}}$ is weakly continuous by Lemma \ref{lemma:weak_continuity} (in the case $q=n$)	, the right derivative of $\mu(K_{m,t})$ is continuous in $t$ on $[0,\varepsilon]$. Since if the derivative of a continuous function is continuous, then the function itself is differentiable, the function $\mu(K_{m,t})$ is differentiable in $(0,\varepsilon)$ for every $m$, as desired.
		
		\textit{Step 3}(Uniform convergence). To prove the uniform convergence of the derivatives \eqref{derivative}, we start by repeating the procedure for the limits of \eqref{three} for $t \in [0,\varepsilon]$, $\varepsilon>0$ as chosen in the previous step. Consider the decomposition in \eqref{three} applied to $K_{m,t}$ for a general $t \in [0,\varepsilon]$. The first integral is independent of $m$. Indeed $\lfloor K_{m,t} \rfloor =u_t$ for every $m$ since $K_{m,t}=K^{u,t}+\ell_m$. Furthermore, $\dom(u_{m,t})=\dom(u_t)$ for every $m$. For the second integral, notice that since for $t\to 0$ one has $\lceil K_{m,t} \rceil \eqcolon \tilde{u}_{m,t}\to \tilde{u}_m=\tilde{u}+m$, we can find a sequence of values $M_m \to +\infty$ as $m \to \infty$ such that for every $t \in [0,\varepsilon]$ we have $M_m\leq \min_{\dom(u_t)}\tilde{u}_{m,t}$. Moreover, notice that for every $x \in \dom(u_t)$ we have $\nabla u_t(x)=-\nabla \tilde{u}_{m,t}(x)$ since by Lemma \ref{lemma:ex_claim} $\tilde u_{m,t}$ is the reflection of $u_t$ with respect to $H$ up to a constant. Then, for the second integral in \eqref{three}, we have
		\begin{align*}
			&\left|\int_{\dom(u_t)} \zeta(\nabla u_y(x))\phi(\tilde{u}_{m,t}(x))\psi(x)\,dx \right| \\  \leq & \left(\sup_{t \in [M_m,+\infty]}\phi(t)\right)\int_{\dom(u_t)} |\zeta(\nabla u_t(x))\psi(x)|\,dx\\
			= & \left(\sup_{t \in [M_m,+\infty]}\phi(t)\right)\int_{\dom(u_t)} |\zeta(\nabla u_t(x))\psi(x)|\,dx\\  = &  \left(\sup_{t \in [M_m,+\infty]}\phi(t)\right)\int_{{\partial K^{u,t}_{-}}}|\bar{\zeta}(g^{-1}(\nabla u_t(x)))|\psi(x)\, d\haus^{n}((x,z))\\
			\leq & \left(\sup_{t \in [M_m,+\infty]}\phi(t)\right)\max_{\Nu \in \sph^{n}} |\bar{\zeta}(\Nu)| \max_{t \in [0,\varepsilon]}\left( \int_{\partial K^{u,t}} \psi(x)\, d\haus^{n}((x,z))\right),
		\end{align*}
		where $g$ was defined in \eqref{gnomonic_projection}, and therefore the first line converges to $0$ independently of $t$. In the last inequality we have used that $\phi \geq 0$ and $\partial K^{u,t}_{-} \subset \partial K^{u,t}$. Notice that the maximum in $t$ is bounded since $K^{u,t}$ is continuous in $t$. Finally, for the last integral, the convergence is granted again by the dominated convergence theorem and making use of the fact that $\dom(u_t)=\dom(u_{m,t})$. Indeed, we get
		\begin{equation*}
			\begin{split}
				&\left|\int_{\partial \dom(u_t)}\rho_{\zeta}(N_{\dom(u_t)}(x))\Phi(u_t(x))\psi(x)\,d\mathcal{H}^{n-1}(x)- \right. \\
				& \left. \int_{\partial \dom(u_{m,t})}\rho_{\zeta}(N_{\dom(u_{m,t})}(x))\left( \int_{u_t(x)}^{\tilde{u}_{m,t}}\phi(s)\,ds \right)\psi(x)\,d\mathcal{H}^{n-1}(x) \right| \\
				=&\left| \int_{\partial \dom(u_t)}\rho_{\zeta}(N_{\dom(u_{m,t})}(x))\left( \int_{\tilde{u}_{m,t}(x)}^{+\infty} \phi(s)\,ds \right) \psi(x)\,d\mathcal{H}^{n-1}(x) \right|  \\
				\leq &\max_{\Nu \in \sph^{n-1}} |\rho_\zeta(\Nu)|\max_{t \in [0,\varepsilon]}\left|\int_{\partial \dom(u_t)} \psi(x)\,d\mathcal{H}^{n-1}(x) \right| \left( \int_{C+m/2}^{+\infty} \phi(s)\,ds \right),
			\end{split}
		\end{equation*}
		where $C= \max_{t \in [0,\varepsilon]}\max_{x \in\dom(u_t)}u_t(x)$. Notice that the maximum on $[0,\varepsilon]$ is finite since the integral is continuous in $t$, as $\dom(u_t)$ is the projection on $e_{n+1}^\perp$ of $K^{u,t}$, which is continuous in $t$. Then, as $m \to \infty$, the last integral converges to $0$ uniformly on $t$. 
		
		\textit{Conclusion}. We can now safely apply Lemma \ref{thm:lim_der} with $f_m(t)=\mu(K_{m,t})$, concluding the proof.
	\end{proof}
	
	A refinement of the same proof provides the following statement.
	\begin{theorem}\label{gen_al_sing}
		Let $u \in \Coco$ such that $o \in \mathrm{int}(\dom(u))$, and $\zeta \in C_{rec}(\R^n)$. Consider, moreover, a measure $\mu$ on $\R^{n+1}$ such that $d\mu(z,x)=\phi(z)\psi(x)|x|^{q-n}dz \,dx, q>0$, with $\phi \in C(\R)\cap L^1([a,+\infty))$ for some $a \in \R$ and $\psi \in C(\R^n)$ such that $\phi(z)\to 0$ as $z \to +\infty$. Then
		\begin{align*}
			\left. \frac{d}{dt} \mu_n(u_t)\right|_{t=0}&=\\
			\int_{\dom(u)}\zeta(\nabla u(x))\phi(u(x))\psi(x)|x|^{q-n}\, dx+\int_{\partial \dom(u)}& \rho_{\zeta}(N_{\dom(u)}(y))\Phi(u(y))\psi(x)|y|^{q-n}\, d\mathcal{H}^{n-1}(y),,
		\end{align*}
		exists and is finite, where $\Phi(t)=\int_t^{+\infty} \phi(z)dz$.
	\end{theorem}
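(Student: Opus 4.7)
The plan is to run the three-step argument from the proof of Theorem \ref{gen_al} essentially verbatim, with modifications tailored to the singular factor $|x|^{q-n}$. The strengthened hypothesis $o \in \mathrm{int}(\dom(u))$ is precisely what is needed so that Corollary \ref{cor:alex} can replace Lemma \ref{alex}: by Theorem \ref{continuity}, it guarantees $o \in \mathrm{pr}_H(\mathrm{int} K_{m,t})$ uniformly in $m \in \N$ and $t \in [0,\varepsilon]$ for $\varepsilon$ small, where $K_{m,t} \coloneqq [h_{K^u+\ell_T+\ell_m} + t\bar{\zeta}]$, and $T$ is chosen via Lemma \ref{lemma:ex_claim}. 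The resulting variational identity reads
\begin{equation*}
\lim_{t \to 0^+}\frac{\mu(K_{m,t}) - \mu(K^u+\ell_T+\ell_m)}{t} = \int_{\partial(K^u+\ell_{T+m})} \bar{\zeta}(N_{K^u+\ell_{T+m}}(X))\phi(z)\psi(x)|x|^{q-n}\,d\haus^n(X),
\end{equation*}
which splits, via \eqref{change1}, into lower-graph, upper-graph, and lateral pieces, each now carrying the extra factor $|x|^{q-n}$ and the lateral one involving $\rho_\zeta$ since $\bar{\zeta}$ restricted to the equator equals $\rho_\zeta$.

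Step 1 (letting $m \to \infty$) carries over once integrability is checked. The upper-graph term is controlled by $\bigl(\sup_{s \geq (\min_{\dom(u)}\tilde{u})+m}\phi(s)\bigr)\int_{\dom(u)}|\zeta(\nabla u)\psi||x|^{q-n}\,dx$, and the latter integral is finite because it is a piece of the total weighted surface area mass of $K^u$, which is finite by the remark after Lemma \ref{lemma:weak_continuity}. The lateral term converges to $\int_{\partial\dom(u)}\rho_\zeta(N_{\dom(u)}(y))\Phi(u(y))\psi(y)|y|^{q-n}\,d\mathcal{H}^{n-1}(y)$ by monotone/dominated convergence; the singularity at the origin poses no difficulty on $\partial \dom(u)$ since the assumption $o \in \mathrm{int}(\dom(u))$ forces $|y|$ to be bounded below there. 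Step 2 (pointwise differentiability of $f_m(t) \coloneqq \mu(K_{m,t})$ on $[0,\varepsilon]$) rests on Theorem \ref{semigroup} combined with Corollary \ref{cor:alex} and the weak continuity of $S_{\mu,K_{m,t}}$ in $t$. The latter is now available in the full range $q>0$ from Lemma \ref{lemma:weak_continuity}, so the right derivative of $f_m$ is continuous on $[0,\varepsilon]$ and $f_m$ is therefore differentiable there.

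The main obstacle is Step 3, the uniform convergence of $f_m'(t)$ in $t$ as $m \to \infty$. Here one repeats the three-piece bound from the proof of Theorem \ref{gen_al}, now needing uniform control of the weighted boundary integrals in $t$. The essential ingredients are the continuity of $t \mapsto K^{u,t}$ in the Hausdorff metric (Theorem \ref{continuity}), the consequent continuity of $t \mapsto \dom(u_t)$, and the fact that $o$ remains in $\mathrm{pr}_H(\mathrm{int} K^{u,t})$ uniformly in $t \in [0,\varepsilon]$; combined with Lemma \ref{lemma:weak_continuity} this furnishes uniform-in-$t$ bounds on $\int_{\partial K^{u,t}}\psi(x)|x|^{q-n}\,d\haus^n$ and $\int_{\partial \dom(u_t)}\psi(y)|y|^{q-n}\,d\mathcal{H}^{n-1}$. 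With these uniform bounds the upper-graph and lateral-boundary contributions can be dominated by quantities vanishing as $m \to \infty$ independently of $t$, and Lemma \ref{thm:lim_der} applied to $f_m(t) = \mu(K_{m,t})$ then concludes the proof.
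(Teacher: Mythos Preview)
Your proposal is correct and follows essentially the same approach as the paper: the paper's proof simply observes that, since $o \in \mathrm{int}(\dom(u))$ implies $o \in \mathrm{pr}_H(\mathrm{int}K_{m,t})$ for $\varepsilon$ small and all $m$, one may repeat the proof of Theorem~\ref{gen_al} verbatim, replacing Lemma~\ref{alex} by Corollary~\ref{cor:alex} and invoking Lemma~\ref{lemma:weak_continuity} in its full generality (arbitrary $q>0$). Your write-up spells out in more detail where the singular weight needs attention---notably that $|y|$ is bounded away from zero on $\partial\dom(u)$ and that the weighted surface area masses are uniformly finite in $t$---but these are exactly the points implicitly covered by the paper's appeal to Lemma~\ref{lemma:weak_continuity}.
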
\noindent
	\begin{proof}
		Consider the sets $K_{m,t}$ obtained through the same construction as in the proof of Theorem \ref{gen_al}. Since $o \in \mathrm{int}(\dom(u))$, for $\varepsilon>0$ suitably small $o \in \mathrm{pr_H}(\mathrm{int}K_{m,t})$ for every $m$. Then, we can repeat the proof verbatim since Lemma \ref{lemma:weak_continuity} and Corollary \ref{cor:alex} can be applied again.
	\end{proof}
	
	\section{Application to moment measures}
	
	Consider the case $\phi(z)=e^{-z}$, $\psi \equiv 1$. This point of view has been initially investigated by Colesanti and Fragalà \cite{ColFrag} and Cordero-Erausquin and Klartag \cite{CorKlar}. The two interpretations stem from independent perspectives. The former aims to generalize some classical concepts from the Brunn-Minkowski theory. The latter originates from the interest in the KLS conjecture and solutions of differential equations in Kähler-Einstein manifolds; see, for example, Klartag \cite{KlaMM}. The first approach required $C^2$ regularity on the interior of the domain, while the second was restricted to essentially continuous convex functions (see \cite[Definition 2]{CorKlar}). Recently, Rotem \cite{RotSur1,RotSur2} improved significantly these results, dropping all regularity assumptions with the result in Theorem \ref{thm:intro_rot}. Theorem \ref{gen_al} immediately implies the following variant, which we reported in the introduction as Theorem \ref{thm:intro_gen}
	\begin{corollary}\label{cor:Rotem}
		Consider $u \in \Coco$, $\zeta \in C_{rec}(\R^n)$, and the measure $\mu$ in $\R^{n+1}$ such that $d\mu(x,z)=e^{-z}dzdx$. Then, 
		\begin{equation*}
			\left. \frac{d}{dt}\mu(u_t)\right|_{t=0}=\int_{\dom(u)}\zeta(\nabla u(x))e^{-u(x)}\,dx+\int_{\partial \dom(u)}\rho_\zeta(N_{\dom(u)}(y))e^{-u(y)}\,d\mathcal{H}^{n-1}(y).
		\end{equation*}
	\end{corollary}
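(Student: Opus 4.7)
The plan is a direct specialization of Theorem \ref{gen_al} to the measure $d\mu(x,z)=e^{-z}\,dz\,dx$. First I would verify that the pair $(\phi,\psi)$ with $\phi(z)=e^{-z}$ and $\psi\equiv 1$ satisfies the hypotheses of Theorem \ref{gen_al}: $\phi$ is continuous on $\R$, belongs to $L^1([a,+\infty))$ for every $a\in\R$ (since $\int_a^{+\infty}e^{-z}\,dz=e^{-a}<+\infty$), and satisfies $\phi(z)\to 0$ as $z\to+\infty$, while $\psi\equiv 1$ is trivially continuous and non-negative. Hence $d\mu(x,z)=\phi(z)\psi(x)\,dz\,dx$ falls exactly in the class handled by Theorem \ref{gen_al}.

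Second, I would compute the tail function that appears in the boundary term of Theorem \ref{gen_al}:
\[
\Phi(t)=\int_t^{+\infty}\phi(z)\,dz=\int_t^{+\infty}e^{-z}\,dz=e^{-t}.
\]
Substituting $\phi(u(x))\psi(x)=e^{-u(x)}$ and $\Phi(u(y))\psi(y)=e^{-u(y)}$ into the conclusion of Theorem \ref{gen_al}, the bulk term becomes $\int_{\dom(u)}\zeta(\nabla u(x))e^{-u(x)}\,dx$ and the boundary term becomes $\int_{\partial\dom(u)}\rho_\zeta(N_{\dom(u)}(y))e^{-u(y)}\,d\haus^{n-1}(y)$, which is exactly the identity claimed.

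There is no real obstacle here, since all the analytic content (the approximation by Wulff shapes in $\R^{n+1}$, the semigroup and continuity properties, the weighted Aleksandrov lemma, and the passage to the limit as the auxiliary vertical segment grows) has already been absorbed into Theorem \ref{gen_al}. The only minor caveat is that Theorem \ref{gen_al} is stated under $\mathrm{int}(\dom(u))\neq\emptyset$; if $\dom(u)$ is lower-dimensional the left-hand side and the bulk integral both vanish, and the boundary integral can be interpreted in the ambient affine hull of $\dom(u)$, so the formula persists trivially or after an obvious dimension reduction.
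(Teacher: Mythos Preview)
Your proposal is correct and matches the paper's approach exactly: the paper simply states that Corollary~\ref{cor:Rotem} is an immediate consequence of Theorem~\ref{gen_al}, and your verification that $\phi(z)=e^{-z}$, $\psi\equiv 1$ satisfy the hypotheses together with the computation $\Phi(t)=e^{-t}$ is precisely what that specialization entails. One small caution: your final remark about the degenerate case $\mathrm{int}(\dom(u))=\emptyset$ is not quite right in general (for instance, if $\rho_\zeta>0$ the Wulff shape $[h_{\dom(u)}+t\rho_\zeta]$ can become full-dimensional for $t>0$, so $\mu(u_t)$ need not vanish), but the paper does not treat this case either and the hypothesis from Theorem~\ref{gen_al} should be understood as implicit.
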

	
	We can then move on to the measures studied in \cite{dual_curvature_log}, which, in our setting, correspond to the choice $d\mu(x,z)=d\mu_q(x,z)=e^{-z}|x|^{q-n}dxdz$. The functional volume then is \[\mu_q(u)\coloneqq \int_{\dom(u)} e^{-u(x)}|x|^{q-n}\, dx\] for $u \in \Coco$ such that $o \in \mathrm{int}(\dom(u))$. As proved in \cite[Proposition 1.2]{dual_curvature_log}, if $u$ is coercive, this integral is finite, which is always the case if $u \in \Coco$. 
	Moreover, Theorem 1.3 in the same work provides the following variational result. 
	\begin{theorem}[Huang, Liu, Xi, and Zhao]
		\label{thm:dual_Zhao}
		Consider $u\in \Cosc(\R^n)$ such that $o \in \mathrm{int}(\dom(u))$, and choose $q>0$. Assume $u$ achieves its minimum at the origin and
		\begin{equation}\label{eq:hp_dual}
			\limsup_{|x|\to 0} \frac{|e^{-u(x)}-e^{-u(0)}|}{|x|^{\alpha+1}}<\infty,
		\end{equation}
		for some $0<\alpha<1$.
		
		Consider, moreover, $v \in \Coco$ such that $o \in \dom(v)$. Then,
		\begin{align*}
			&\lim_{t\to 0^+} \frac{\mu_q(\epi(u\square(t\sq v)))-\mu_q(\epi(u))}{t} =\\ \int_{\R^n} v^*(\nabla u(x)) &e^{-u(x)}|x|^{q-n}\, dx +\int_{\partial \dom(u)} h_{\dom(v)}(N_{\dom(u)}(y)) e^{-u(y)}|y|^{q-n}\,d\haus^{n-1}(y).
		\end{align*}
	\end{theorem}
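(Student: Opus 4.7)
The plan is to reduce the problem to the setting of Theorem \ref{gen_al_sing} by truncating $u$. Since $v \in \Coco$, the Fenchel--Legendre transform $v^*$ lies in $C_{rec}(\R^n)$, with recession function $\rho_{v^*} = h_{\dom(v)}$; and by \eqref{dualsum}, the curve $t \mapsto u \square (t \sq v)$ coincides with $t \mapsto (u^* + tv^*)^*$. Were $u$ in $\Coco$, the theorem would follow immediately from Theorem \ref{gen_al_sing} applied with $\phi(z) = e^{-z}$, $\psi \equiv 1$, and $\zeta = v^*$. The obstacle is that only $u \in \Cosc(\R^n)$ is assumed, so $\dom(u)$ may be unbounded.

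I would introduce the truncation $u_k \coloneqq u + I_{kB^n}$ for large $k$, which lies in $\Coco$ with $o \in \mathrm{int}(\dom u_k)$ (since $o \in \mathrm{int}(\dom u)$). Theorem \ref{gen_al_sing} then yields
\[
\left.\frac{d}{dt}\mu_q\bigl(u_k \square (t \sq v)\bigr)\right|_{t=0^+}
= \int_{\dom u_k} v^*(\nabla u)\, e^{-u}\, |x|^{q-n}\, dx
+ \int_{\partial \dom u_k} h_{\dom v}(N_{\dom u_k})\, e^{-u}\, |y|^{q-n}\, d\haus^{n-1}(y).
\]
The boundary $\partial \dom u_k$ splits as $A_k \cup B_k$, where $A_k = \partial \dom u \cap k\,\mathrm{int}(B^n)$ and $B_k = \dom u \cap k\sph^{n-1}$. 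On $A_k$ the normal satisfies $N_{\dom u_k} = N_{\dom u}$, so the contribution rises monotonically to the target boundary integral $\int_{\partial \dom u} h_{\dom v}(N_{\dom u}) e^{-u} |y|^{q-n} d\haus^{n-1}$. On $B_k$ the outer normal is $y/|y|$, and the coercivity of $u$ gives super-polynomial decay of $e^{-u(y)}$, overwhelming the polynomial growth of $\haus^{n-1}(B_k)|y|^{q-n}$ so this term vanishes. The volume integral converges by monotone/dominated convergence to $\int_{\R^n} v^*(\nabla u) e^{-u} |x|^{q-n} dx$, using $v^* \geq -v(0) > -\infty$ to dominate the negative part.

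To finish, I would apply Lemma \ref{thm:lim_der} to $f_k(t) \coloneqq \mu_q(u_k \square (t \sq v))$ on a small interval $[0,\varepsilon]$: pointwise convergence at $t = 0$ follows from monotone convergence of $\mu_q(u_k) \nearrow \mu_q(u)$, so the remaining task is uniform convergence of the derivatives. The main obstacle lies exactly here — one must upgrade the $A_k$-convergence and the $B_k$-vanishing to be uniform in $t \in [0,\varepsilon]$, which in turn requires a uniform coercivity estimate for $u_{k,t} \coloneqq (u_k^* + tv^*)^*$ together with uniform control of the truncated level sets. This is in the same spirit as Step 3 in the proof of Theorem \ref{gen_al}, but more delicate here because of the singular weight $|x|^{q-n}$; the regularity hypothesis \eqref{eq:hp_dual} is precisely what makes the origin singularity integrable and uniformly controlled across the perturbation when $q < n$, while $u(0), v(0) < \infty$ keeps the approximating sequence nondegenerate near the origin.
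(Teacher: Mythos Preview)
The paper does not prove Theorem~\ref{thm:dual_Zhao} itself: it is quoted from \cite{dual_curvature_log}. What the paper proves is the stronger Theorem~\ref{thm:dual_new_intro}, which drops hypothesis~\eqref{eq:hp_dual} entirely. Your proposal is, in outline, precisely the paper's proof of that stronger result: truncate via $u_m\coloneqq u+I_{mB^n}\in\Coco$, apply Corollary~\ref{cor:dual_Rotem} (the specialization of Theorem~\ref{gen_al_sing} to $\phi(z)=e^{-z},\psi\equiv1$) to each $u_m$, and finish with Lemma~\ref{thm:lim_der}. So the strategy is correct and matches the paper.

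Two points deserve correction. First, your last sentence misidentifies the role of \eqref{eq:hp_dual}. The paper's argument never uses it; the origin singularity is harmless because $o\in\mathrm{int}(\dom(u_{m,t}))$ for all $m$ and small $t$, so it sits in the interior where only the locally integrable weight $|x|^{q-n}$ matters, and Lemma~\ref{lemma:weak_continuity}/Corollary~\ref{cor:alex} already absorb that. The uniformity in $t$ comes instead from two ingredients you did not name: Lemma~\ref{lemma:good_approx}, which gives $u_{m,t}(x)=u_t(x)$ for $|x|\le\delta(m)$ with $\delta(m)\to\infty$ (so the difference lives only in the tails), and the quantitative tail bounds of Lemma~\ref{lemma:estimates} from \cite{dual_curvature_log}, combined with the uniform envelope $e^{-u_{m,t}}\le e^{-u_t}\le Ae^{-c|x|}$. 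Second, your boundary splitting $\partial\dom u_k=A_k\cup B_k$ is written at $t=0$, but for uniform convergence you must control $\partial\dom(u_{m,t})=\partial(\dom(u_m)+t\dom(v))$, which moves with $t$; the paper handles this by the same tail localization (the symmetric difference of $\partial\dom(u_t)$ and $\partial\dom(u_{m,t})$ lies outside $mB^n$) together with the estimates following~\eqref{eq:estimate_boundary}. Your $A_k/B_k$ picture is morally right but would need to be rewritten along these lines to close the argument.
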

	
	A direct application of Theorem \ref{gen_al_sing} shows that the hypothesis \eqref{eq:hp_dual} is not always necessary for $u \in \Coco$. Moreover, we can consider all deformations in $C_{rec}(\R^n)$. Compare with \cite[Lemma 5.2]{dual_curvature_log}.
	\begin{corollary}\label{cor:dual_Rotem}
		Consider $u \in \Coco$ and $\zeta \in C_{rec}(\R^n)$, and $\mu$ the measure in $\R^{n+1}$ such that $d\mu(x,z)=e^{-z}|x|^{q-n}dzdx$. Suppose, moreover that $o \in \mathrm{int}(\dom(u))$. Then
		\begin{align}
			\label{eq:dual_log} \left. \frac{d}{dt}\mu_q(u_t)\right|_{t=0}&=\\\int_{\R^n} & \zeta(\nabla u(x))e^{-u(x)}|x|^{q-n}\, dx + \int_{\partial \dom(u)}\rho_\zeta(N_{\dom(u)}(y))e^{-u(y)}|y|^{q-n}\,d\mathcal{H}^{n-1}(y).  \notag
		\end{align}
	\end{corollary}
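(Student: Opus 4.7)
The plan is to obtain this result as a direct specialization of Theorem \ref{gen_al_sing}, the singular-weight version of the general variational formula. The hypotheses of the corollary align almost verbatim with those of Theorem \ref{gen_al_sing}, so essentially no new work is required beyond identifying the correct choice of weights and computing the corresponding $\Phi$.

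Concretely, I would set $\phi(z) = e^{-z}$ and $\psi \equiv 1$. Then $\phi \in C(\R)$, $\phi$ belongs to $L^1([a,+\infty))$ for every $a \in \R$, and $\phi(z) \to 0$ as $z \to +\infty$; the constant function $\psi \equiv 1$ is trivially continuous on $\R^n$. The remaining structural hypotheses, namely $u \in \Coco$ with $o \in \mathrm{int}(\dom(u))$, $\zeta \in C_{rec}(\R^n)$, and $q>0$, are exactly the ones assumed in the corollary. The product density $d\mu(x,z) = \phi(z)\psi(x) |x|^{q-n}\, dz\, dx = e^{-z}|x|^{q-n}\, dz\, dx$ matches $\mu_q$ precisely.

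Next, I would compute $\Phi(t) = \int_t^{+\infty}\phi(z)\, dz = \int_t^{+\infty} e^{-z}\, dz = e^{-t}$, so that $\Phi(u(y)) = e^{-u(y)}$. Plugging $\phi(u(x)) = e^{-u(x)}$, $\Phi(u(y)) = e^{-u(y)}$, and $\psi \equiv 1$ into the conclusion of Theorem \ref{gen_al_sing} yields exactly the identity \eqref{eq:dual_log}. No step here poses a genuine obstacle, since the heavy lifting (the approximation of $\epi(u_t)$ by compact Wulff shapes $K_{m,t}$, the weak continuity of the weighted surface area measure from Lemma \ref{lemma:weak_continuity}, the uniform convergence of the derivatives via Lemma \ref{thm:lim_der}, and the applicability of Corollary \ref{cor:alex} for $0<q<n$) was carried out in the proofs of Theorem \ref{gen_al} and Theorem \ref{gen_al_sing}. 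The only mild point to note is that for $q \geq n$ no special treatment of the origin is needed, while for $0<q<n$ the assumption $o \in \mathrm{int}(\dom(u))$ guarantees, via continuity of the Wulff shapes in $t$, that $o \in \mathrm{pr}_H(\mathrm{int}K_{m,t})$ on a sufficiently small interval $t \in [0,\varepsilon]$, which is the hypothesis needed to invoke Corollary \ref{cor:alex}.
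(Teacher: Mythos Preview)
Your proposal is correct and matches the paper's approach exactly: the paper presents Corollary~\ref{cor:dual_Rotem} as a direct application of Theorem~\ref{gen_al_sing} with $\phi(z)=e^{-z}$ and $\psi\equiv 1$, and offers no further argument. Your explicit verification of the hypotheses and the computation $\Phi(t)=e^{-t}$ are precisely what is needed.
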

	
	The two integrals in \eqref{eq:dual_log} are finite whenever $u \in \Cosc(\R^n)$ and $\zeta$ is Lipschitz (thus including the case $\zeta=v^*,v \in \Coco$), as the following lemma shows. The finiteness of the first one is a consequence of \cite[Theorem 5.12]{dual_curvature_log}, while for the second one, we adapt the idea from \cite[Proposition 1.6]{RotSur2}, which in turn is inspired by an argument by Ball \cite{BallRev}.
	\begin{lemma}\label{lemma:dual_finiteness}
		Consider $u \in \Cosc(\R^n)$ such that $o \in \mathrm{int}(\dom(u))$. If $\zeta \in C_{rec}(\R^n)$ is Lipschitz, the integrals
		\begin{align*}
			\int_{\R^n} \left| \zeta(\nabla u(x)) \right| e^{-u(x)}|x|^{q-n}\, dx \text{ and }\\ \int_{\partial \dom(u)} \left| \rho_\zeta(N_{\dom(u)}(y))\right| e^{-u(y)}|y|^{q-n}\,d\haus^{n-1}(y)
		\end{align*}
		are finite.
	\end{lemma}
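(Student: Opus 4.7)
The two integrals require different ideas. For the bulk integral we exploit that $\zeta$ is Lipschitz, so pointwise $|\zeta(p)| \leq |\zeta(0)| + L|p|$; this reduces finiteness to that of $\mu_q(u)$ and of the first moment $\int_{\R^n} |\nabla u(x)| e^{-u(x)} |x|^{q-n} dx$. The former is \cite[Proposition 1.2]{dual_curvature_log} and the latter is exactly the content of \cite[Theorem 5.12]{dual_curvature_log}, so this half is essentially immediate.

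The boundary integral is the substantive part. Since $\rho_\zeta$ is continuous on $\sph^{n-1}$, hence bounded, it suffices to show that $I \coloneqq \int_{\partial \dom(u)} e^{-u(y)} |y|^{q-n} d\haus^{n-1}(y)$ is finite. Here we adapt the Ball--Rotem shell argument. Since $u \in \Cosc(\R^n)$ it attains its infimum, so after shifting $u$ by its minimum (harmless since $\mu_q(u)$ and $I$ get multiplied by the same constant) we may assume $u \geq 0$. Choose $r > 0$ with $rB^n \subset \dom(u)$ and, for small $\varepsilon > 0$, define the radial shell $S_\varepsilon \coloneqq \dom(u) \setminus (1-\varepsilon)\dom(u)$. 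Parametrize $S_\varepsilon$ by $(y,\lambda) \mapsto \lambda y$ with $y \in \partial \dom(u)$ and $\lambda \in [1-\varepsilon, 1]$; the standard Jacobian for this map is $\lambda^{n-1}(y \cdot N_{\dom(u)}(y))$. Convexity along the segment $[o,y]$ yields $u(\lambda y) \leq (1-\lambda) u(o) + \lambda u(y) \leq \varepsilon u(o) + u(y)$, so $e^{-u(\lambda y)} \geq e^{-\varepsilon u(o)} e^{-u(y)}$. Moreover $y \cdot N_{\dom(u)}(y) = h_{\dom(u)}(N_{\dom(u)}(y)) \geq r$. Combining gives
\begin{equation*}
    \mu_q(u) \geq \int_{S_\varepsilon} e^{-u(x)} |x|^{q-n} dx \geq r\, e^{-\varepsilon u(o)} \left( \int_{1-\varepsilon}^1 \lambda^{q-1} d\lambda \right) I,
\end{equation*}
and since the coefficient of $I$ is strictly positive and $\mu_q(u)$ is finite, $I$ is finite as well.

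The main obstacle is the boundary step: one must pick a parametrization that simultaneously reproduces $|y|^{q-n}$, has Jacobian bounded below on $\partial \dom(u)$, and combines cleanly with the convexity bound on $u(\lambda y)$. Radial coordinates based at the origin do all three. The assumption $o \in \mathrm{int}(\dom(u))$ serves a double role: it guarantees $\partial \dom(u) \subset \{|y| \geq r\}$ (so the singular factor $|y|^{q-n}$ is controlled when $q < n$) and it keeps $y \cdot N_{\dom(u)}(y)$ bounded below. The finiteness of $\mu_q(u)$ then closes the estimate. Points of $\partial \dom(u)$ where $u(y) = +\infty$ contribute zero and may be ignored throughout.
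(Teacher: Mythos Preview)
Your argument is correct. For the first integral it coincides with the paper's proof (Lipschitz bound $|\zeta(y)|\le L|y|+a$ followed by the same two citations). For the boundary integral you take a genuinely different route. The paper bounds $e^{-u(y)}$ by $Ae^{-c|y|}$ via coercivity, invokes Ball's layer-cake identity $e^{-c|y|}=c\int_0^\infty e^{-ct}\mathds{1}_{tB^n}(y)\,dt$, and is left with estimating $\int_{\partial\dom(u)\cap tB^n}|y|^{q-n}\,d\haus^{n-1}$ uniformly in $t$, which is then handled by splitting the $t$-range and comparing to spheres $\partial(tB^n)$. Your shell argument instead compares $I$ directly to $\mu_q(u)$: the cone parametrization $x=\lambda y$ with Jacobian $\lambda^{n-1}(y\cdot N_{\dom(u)}(y))$ yields exactly $\lambda^{q-1}|y|^{q-n}(y\cdot N_{\dom(u)}(y))$, the factor $y\cdot N_{\dom(u)}(y)=h_{\dom(u)}(N_{\dom(u)}(y))\ge r$ is bounded below because $rB^n\subset\dom(u)$, and convexity of $u$ along $[o,y]$ (after normalising $u\ge0$) gives $e^{-u(\lambda y)}\ge e^{-\varepsilon u(o)}e^{-u(y)}$. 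This is more elementary and self-contained: it avoids the exponential coercivity bound, the Ball decomposition, and the surface-area comparison with spheres, using only the finiteness of $\mu_q(u)$ from \cite[Proposition 1.2]{dual_curvature_log} as a black box. The paper's approach, on the other hand, is more robust for the later uniform-in-$t$ estimates needed in the proof of Theorem~\ref{thm:dual_new_intro}, since the explicit tail bounds it produces transfer directly. One small remark: your parametrization of $S_\varepsilon$ is a bijection even when $\dom(u)$ is unbounded, because any ray from $o$ that stays in $\dom(u)$ contributes nothing to $S_\varepsilon$; you might state this explicitly.
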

	\begin{proof}
		First notice that since $\zeta$ is a Lipschitz function there exists $L \geq 0$ such that for every $x \in \R^n$ \[|\zeta(x)|-|\zeta(0)|\leq |\zeta|-|\zeta(0)||\leq L|x|. \] That is, there exists $a\geq 0$ such that \[|\zeta(x)|\leq L|x|+a\] for every $x \in \R^n$. Then \[  \int_{\R^n} \left|\zeta(\nabla u(x))\right|e^{-u(x)}|x|^{q-n}\, dx  \leq \int_{\R^n} L|\nabla u(x)|e^{-u(x)}|x|^{q-n}\, dx+\int_{\R^n} ae^{-u(x)}|x|^{q-n}\, dx. \] The first integral on the right is finite thanks to \cite[Theorem 5.12]{dual_curvature_log}, while the second is finite by \cite[Proposition 1.2]{dual_curvature_log} since $u$ is coercive.
		
		Consider now the second integral in the statement. Since $u$ is convex and coercive there exist $A,c>0$ such that $e^{-u(x)}\leq Ae^{-c|x|}$ (see \cite[Lemma 2.1]{KlarUnif}). Moreover, notice that \[e^{-c|x|}=c\int_0^{+\infty} e^{-ct} \mathds{1}_{tB^n}(x) dt \] for every $x \in \R^n$, where $\mathds{1}_{tB^n}=e^{-I_{tB^n}}$ is the characteristic function of the Euclidean unit ball. Then we infer 
		\begin{equation}\label{eq:estimate_boundary}
			\begin{split}
				\int_{\partial \dom(u)} e^{-u(y)}&|y|^{q-n}\, d\haus^{n-1}(y)\leq A \int_{\partial \dom(u)}e^{-c|y|}|y|^{q-n}\, d\haus^{n-1}(y) \\= Ac &\int_{\partial \dom(u)}\int_0^{+\infty} e^{-ct} \mathds{1}_{tB^n}(y) |y|^{q-n}\, dtd\haus^{n-1}(y)\\ = Ac &\int_0^{+\infty} e^{-ct}\int_{\partial \dom(u) \cap tB^n}|y|^{q-n}\, d\haus^{n-1}(y)dt.
			\end{split}
		\end{equation}
		If $0<q<n$ we continue \eqref{eq:estimate_boundary} by 
		\begin{align*}
			\ldots \leq Ac\left(\int_0^1e^{-ct}\int_{\partial tB^n}|y|^{q-n}\, d\haus^{n-1}(y)\, dt+\int_1^{+\infty}e^{-ct}\int_{\partial tB^n}\, d\haus^{n-1}(y)\, dt\right) \\
			= Ac\haus^{n-1}(\sph^{n-1})\left( \int_0^{1} e^{-ct}t^{q-1}\, dt + \int_1^{+\infty} e^{-ct}t^{n-1}\, dt \right).
		\end{align*}
		If, alternatively, $n\leq q$, analogously we obtain
		\begin{align*}
			\ldots \leq Ac\left(\int_0^1e^{-ct}\int_{\partial tB^n}\, d\haus^{n-1}(y)\, dt+\int_1^{+\infty}e^{-ct}\int_{\partial tB^n}\, d\haus^{n-1}(y)|y|^{q-n}\, dt\right) \\
			= Ac\haus^{n-1}(\sph^{n-1})\left( \int_0^{1} e^{-ct}t^{n-1}\, dt + \int_1^{+\infty} e^{-ct}t^{q-1}\, dt \right).
		\end{align*}
		In any case, since the integrals in both the last lines are finite, all the preceding ones are. Observing that \[ \int_{\partial \dom(u)}\left| \rho_\zeta(N_{\dom(u)}(y))\right| e^{-u(y)}|y|^{q-n}\,d\haus^{n-1}(y) \leq L\int_{\partial \dom(u)} e^{-u(y)}|y|^{q-n}\, d\haus^{n-1}(y), \]the proof is concluded.
	\end{proof}\noindent
	We now report two estimates from the proof of \cite[Theorem 5.12]{dual_curvature_log}. We use again the fact that if $u \in \Conv(\R^n)$ is coercive then there exist $A,c>0$ such that $e^{-u(x)}\leq Ae^{c|x|}$. In this lemma we write $(x_1,\dots,x_n)=x \in \R^n$ as $x=(\bar{x},x_n), \bar{x}\in \R^{n-1}$.
	\begin{lemma}\label{lemma:estimates}
		Consider $u \in \Cosc(\R^n)$ and let $A,c>0$ be such that $e^{-u(x)}\leq Ae^{-c|x|}$ for every $x \in \R^n$.  Then, for every $m \in \N$:
		\begin{itemize}
			\item If $0<q\leq n$
			\begin{equation}\label{eq:estimate_1}
				\begin{split}
					&\int_{\R^n\setminus mB^n} \left| \frac{\partial}{\partial x_n} e^{-u(x)}\right||x|^{q-n}\, dx\leq \\
					4A&\int_{\left( \R^n \setminus \frac{m}{2}B^n\right) \cap \R^{n-1}} \sup_{x_n\in \R} e^{-c|(\bar{x},x_n)|}|\bar{x}|^{q-n}\, d\bar{x}\\+4A&\left(\frac{\sqrt{3}}{2}m \right)^{q-n}\int_{\frac{m}{2}B^n \cap \R^{n-1}} \sup_{x_n\in\R} e^{-c|(\bar{x},x_n)|}\, d\bar{x},
				\end{split}
			\end{equation}
			\item If $q>n$ there exists a constant $C$ (independent of $u$)
			\begin{equation}\label{eq:estimate_2}
			\begin{split}
					&\int_{\R^n\setminus mB^n} \left| \frac{\partial}{\partial x_n} e^{-u(x)}\right||x|^{q-n}\, dx\leq \\ 4CA \sum_{k=m}^{\infty}&(k+1)^{q-n}\left((k+1)^{n-1}e^{-c(k-1)}+(k-1)^{n-1}e^{c\sqrt{k}} \right).
			\end{split}
			\end{equation}
		\end{itemize}
	\end{lemma}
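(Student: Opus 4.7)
The plan rests on two ingredients: Fubini's theorem applied in the $x_n$-direction, and the fact that for each fixed $\bar x \in \R^{n-1}$ the function $x_n \mapsto e^{-u(\bar x, x_n)}$ is log-concave on $\R$, hence unimodal. Unimodality gives the one-dimensional total-variation bound
\[
\int_I \left|\frac{\partial}{\partial x_n} e^{-u(\bar x, x_n)}\right|\, dx_n \leq 2 \sup_{x_n \in I} e^{-u(\bar x, x_n)} \leq 2A \sup_{x_n \in I} e^{-c|(\bar x, x_n)|}
\]
on any interval $I \subset \R$, using the hypothesis. Once the outer region in $\bar x$ and the $x_n$-slice inside $\R^n \setminus mB^n$ have been identified, the task reduces to uniformly bounding the weight $|x|^{q-n}$ on that slice.

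For $0 < q \le n$ the weight is non-increasing in $|x|$, which suggests splitting $\R^n \setminus mB^n$ along the cylinder $|\bar x| = m/2$. When $|\bar x| \ge m/2$ one has $|x|^{q-n} \le |\bar x|^{q-n}$ uniformly in $x_n$; Fubini and the TV bound on $I = \R$ then give the first summand of \eqref{eq:estimate_1}. When $|\bar x| < m/2$ and $|x| \ge m$, the constraint forces $x_n^2 \ge 3m^2/4$, so $|x| \ge \tfrac{\sqrt 3}{2}m$ and $|x|^{q-n} \le (\tfrac{\sqrt 3}{2} m)^{q-n}$. The $x_n$-slice is a disjoint union of two half-lines, and applying the TV bound on each yields the second summand; the extra factor $2$ from the two half-lines together with the constant $2$ in the TV bound accounts for the $4A$ prefactor.

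For $q > n$ the weight grows with $|x|$, so decompose $\R^n \setminus mB^n$ into the integer annuli $A_k = \{k \le |x| < k+1\}$ for $k \ge m$, on which $|x|^{q-n} \le (k+1)^{q-n}$. The integral over $A_k$ then splits in $\bar x$: when $|\bar x| \le k-1$, the slice of $A_k$ in $x_n$ forces $|x_n| \ge \sqrt{k^2 - (k-1)^2} \ge \sqrt k$, so $e^{-u} \le A e^{-c\sqrt k}$ on the slice, and combining this with the TV bound and the estimate $\le C(k-1)^{n-1}$ for the $(n-1)$-volume of $(k-1)B^{n-1}$ produces the $(k-1)^{n-1} e^{-c\sqrt k}$ summand (the exponent $e^{c\sqrt k}$ in the printed statement appears to be a typo for $e^{-c\sqrt k}$). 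When $k-1 < |\bar x| \le k+1$, the global bound $\sup_{x_n} e^{-u(\bar x, x_n)} \le A e^{-c|\bar x|} \le A e^{-c(k-1)}$ combined with the volume $\le C(k+1)^{n-1}$ of $(k+1)B^{n-1}$ produces the $(k+1)^{n-1} e^{-c(k-1)}$ summand. Summing over $k \ge m$ gives \eqref{eq:estimate_2} with an absolute constant $C$ depending only on $n$.

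The main technical hurdle is the bookkeeping of these two geometric splittings, in particular identifying for each piece an $x_n$-slice on which the supremum of $e^{-u}$ can be controlled by the desired exponential factor via the hypothesis $e^{-u(x)} \le A e^{-c|x|}$; the constants $4A$ and $C$ then arise from combining the TV constant, the splitting of $\R \setminus [-r,r]$ into two half-lines, and the volumes of Euclidean balls in $\R^{n-1}$.
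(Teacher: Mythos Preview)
The paper does not actually prove this lemma: it is stated there as a direct quotation of two estimates extracted from the proof of \cite[Theorem~5.12]{dual_curvature_log}, with no argument supplied. So there is no ``paper's own proof'' to compare against.

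That said, your sketch is correct and is in substance the argument one finds in the cited source. The two key ingredients are exactly the ones you isolate: (i) for fixed $\bar x$ the map $x_n\mapsto e^{-u(\bar x,x_n)}$ is log--concave, hence unimodal, giving the one-dimensional total-variation bound $\int_I |\partial_{x_n}e^{-u}|\le 2\sup_I e^{-u}\le 2A\sup_I e^{-c|\cdot|}$ on any interval $I$; and (ii) a geometric splitting that lets you bound the weight $|x|^{q-n}$ uniformly on each piece before integrating out $x_n$ by Fubini. Your choice of the cylinder $|\bar x|=m/2$ for $0<q\le n$ and of integer annuli for $q>n$ matches the intended decomposition, and your constant bookkeeping (the factor $4$ from two half-lines times the TV constant $2$, and $C=\omega_{n-1}$ from the ball volumes in $\R^{n-1}$) is right. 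You are also correct that the printed $e^{c\sqrt{k}}$ in \eqref{eq:estimate_2} must be a typo for $e^{-c\sqrt{k}}$; otherwise the series on the right diverges and the inequality is vacuous. One small remark: on the annulus $A_k$ you actually have $|x|\ge k$, so the sharper bound $e^{-ck}$ is available in sub-region~(a); your $e^{-c\sqrt{k}}$ is weaker but suffices and reproduces the stated form.
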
\noindent
	Furthermore, we need the following approximation result.
	\begin{lemma}\label{lemma:good_approx}
		Consider $u \in \Conv(\R^n)$, $v \in \Coco$ and $t\geq 0$. Then, for every $m > \max_{y \in \dom(v)} |y|$ there exists $\delta(m)>0$ such that $\lim_{m\to \infty} \delta(m)=+\infty$ and \[u\square(t\sq v)(x)=(u+I_{m' B^n})\square(t\sq v)(x)\] for every $|x| \leq \delta(m),0\leq t \leq 1$, and $m' \geq m$.
	\end{lemma}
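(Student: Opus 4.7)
The plan is to exploit the compactness of $\dom(v)$. The key observation is that the feasible set of the infimum in $u\square(t\sq v)(x)$, namely $\{y\in\R^n : (t\sq v)(x-y)<+\infty\}$, equals $x-t\dom(v)$, which is a bounded translate of a bounded set. Hence the infimum only involves values of $u$ on a ball around $x$ of radius $tR$, where $R\coloneqq \max_{y\in\dom(v)}|y|$. If $|x|$ is small enough relative to $m$, this ball sits inside $m'B^n$ for every $m'\geq m$, so appending the penalty $I_{m'B^n}$ to $u$ cannot discard any candidate $y$ that contributed a finite value.

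Concretely, I would define $\delta(m)\coloneqq m-R$. Since the hypothesis $m>R$ forces $\delta(m)>0$, and clearly $\delta(m)\to+\infty$ as $m\to\infty$, it remains to verify the identity at fixed $x$ with $|x|\leq \delta(m)$ and $t\in[0,1]$. For $t>0$, if $|y|>m'\geq m$, then
\[|x-y|\geq |y|-|x|>m-\delta(m)=R\geq tR,\]
so $(x-y)/t\notin\dom(v)$ and $(t\sq v)(x-y)=+\infty$. For $t=0$, the convention $0\sq v=I_{\{o\}}$ forces $y=x$ in any finite-value candidate, and $x\in \delta(m)B^n\subset m'B^n$ already. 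In either case, every $y$ outside $m'B^n$ contributes $+\infty$ to the infimum.

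The conclusion is then immediate:
\[(u+I_{m'B^n})\square(t\sq v)(x)=\inf_{y\in m'B^n}\{u(y)+(t\sq v)(x-y)\}=\inf_{y\in\R^n}\{u(y)+(t\sq v)(x-y)\}=u\square(t\sq v)(x),\]
since restricting the infimum to $m'B^n$ discards only $+\infty$ values, and on $m'B^n$ one has $u+I_{m'B^n}=u$.

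I do not foresee any genuine obstacle here: the statement is essentially a bookkeeping observation about the compact support of $(t\sq v)$ in the $y$-variable. The only mild subtlety is the degenerate case $t=0$, which must be handled separately because of the convention $0\sq v=I_{\{o\}}$, but this case is in fact simpler than $t>0$.
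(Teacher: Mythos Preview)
Your proof is correct and follows essentially the same route as the paper: both set $\delta(m)=m-R$ with $R=\max_{y\in\dom(v)}|y|$ and observe that any $y$ outside $m'B^n$ contributes $+\infty$ to the defining infimum because $x-y$ falls outside $t\,\dom(v)$. Your write-up is in fact tidier---you handle the degenerate case $t=0$ explicitly and make the triangle-inequality step transparent, whereas the paper's version is more compressed.
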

	\begin{proof}
		We choose $\delta(m)\coloneq m-\max_{y \in \dom(v)} |y|$. Notice that, by construction, \[u\square(t\sq v)(x)\leq (u+I_{m B^n})\square(t\sq v)(x)\] for every $x \in \R^n$ and every $m$. 
		
		Consider $ m > \max_{y \in \dom(v)} |y|$ and $|x|\leq \delta(m)$. If $u\square(t\sq v)(x)=+\infty$, there is nothing to prove. If $u\square(t\sq v)(x)< +\infty$, then necessarily $x \subset \dom(v)+mB^n$. Therefore, \[u\square(t\sq v)(x)=\inf_{y \in mB^n}\{u(y)+t\sq v(x-y)\}=\inf_{y \in \R^n}\{u(y)+I_{ mB^n}(y)+t\sq v(x-y)\}=(u+I_{m' B^n})\square(t\sq v)(x)\] for every $0\leq t \leq 1$ and $m' \geq m$.
	\end{proof}
	
	A final approximation procedure, going through Corollary \ref{cor:dual_Rotem}, proves Theorem \ref{thm:dual_new_intro}, lifting the assumption \eqref{eq:hp_dual} in Theorem \ref{thm:dual_Zhao}. We rewrite the statement for the convenience of the reader. 
	\begin{theorem*}
		Consider $u\in \Cosc(\R^n)$ such that $o \in \mathrm{int}(\dom(u))$, and choose $q>0$. Consider $v \in \Coco$ such that $o \in \dom(v)$. Then
		\begin{align*}
			&\left. \frac{d}{dt} \mu_q(\epi(u\square(t\sq v))) \right|_{t=0^+} =\\ \int_{\R^n} v^*(\nabla u(x)) e^{-u(x)}|x|^{q-n}\, &dx +\int_{\partial \dom(u)} h_{\dom(v)}(N_{\dom(u)}(y)) e^{-u(y)}|y|^{q-n}\,d\haus^{n-1}(y).
		\end{align*}
	\end{theorem*}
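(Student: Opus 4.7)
The strategy is to reduce Theorem \ref{thm:dual_new_intro} to the already-established Corollary \ref{cor:dual_Rotem} by approximating $u$ (which is coercive but may have unbounded domain) with functions in $\Coco$. Since $v \in \Coco$ with $v(0) < +\infty$, its Fenchel--Legendre transform $v^*$ lies in $C_{rec}(\R^n)$ with $\rho_{v^*} = h_{\dom(v)}$, and by \eqref{dualsum} the perturbation $u_t \coloneqq (u^* + tv^*)^* = u\square(t\sq v)$ is precisely the object studied in Corollary \ref{cor:dual_Rotem} with $\zeta = v^*$. The target formula is thus the conclusion of that corollary, modulo relaxing the hypothesis $u \in \Coco$ to $u \in \Cosc(\R^n)$.

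For each $m$ large enough that $o \in \mathrm{int}(\dom(u) \cap mB^n)$, set $u_m \coloneqq u + I_{mB^n} \in \Coco$ and $u_{m,t} \coloneqq u_m \square (t\sq v)$. Corollary \ref{cor:dual_Rotem} applied to $u_m$ gives
\begin{equation*}
\left.\frac{d}{dt}\mu_q(u_{m,t})\right|_{t=0^+} = \int_{\R^n} v^*(\nabla u_m)\, e^{-u_m}|x|^{q-n}\,dx + \int_{\partial\dom(u_m)} h_{\dom(v)}(N_{\dom(u_m)})\, e^{-u_m}|y|^{q-n}\,d\haus^{n-1}.
\end{equation*}
By Lemma \ref{lemma:good_approx}, $u_{m,t}(x) = u_t(x)$ for $|x| \le \delta(m)$ with $\delta(m) \to +\infty$, so $\mu_q(u_{m,t}) \nearrow \mu_q(u_t)$ pointwise in $t$, the common limit being finite by Lemma \ref{lemma:dual_finiteness}. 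The plan is to apply Lemma \ref{thm:lim_der} to the sequence $f_m(t) \coloneqq \mu_q(u_{m,t})$ on a short interval $[0,\varepsilon]$: differentiability of each $f_m$ on $[0,\varepsilon]$ follows from Corollary \ref{cor:dual_Rotem} applied at an arbitrary $t_0$ combined with the semigroup property Theorem \ref{semigroup}; $f_m(0) = \mu_q(u_m) \to \mu_q(u)$ by monotone convergence; and it remains to verify uniform convergence of $f_m'$.

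The uniform convergence of $f_m'$ on $[0,\varepsilon]$ is the principal obstacle. For the interior integral, $\nabla u_{m,t} = \nabla u_t$ on $mB^n \cap \mathrm{int}\dom(u_t)$, so the discrepancy is confined to $\R^n \setminus mB^n$ and vanishes via the Lipschitz bound on $v^*$ combined with the tail estimates of Lemma \ref{lemma:dual_finiteness} and Lemma \ref{lemma:estimates}. For the boundary integral, $\partial\dom(u_{m,t})$ decomposes into a piece inside $m\,\mathrm{int}\,B^n$ (which converges to the claimed boundary term over $\partial\dom(u_t)$ by dominated convergence, using continuity of $t \mapsto \dom(u_t) = \dom(u) + t\dom(v)$) and a spherical cap on $m\sph^{n-1}$; on the latter $N_{\dom(u_{m,t})}$ is the outward radial direction, and using boundedness of $h_{\dom(v)}$ together with the coercivity estimate $e^{-u(y)} \le Ae^{-c|y|}$ from \cite[Lemma 2.1]{KlarUnif}, this spurious contribution is bounded by a constant times $m^{q-1}e^{-cm}$. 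Making all of these bounds uniform in $t \in [0,\varepsilon]$ requires the trivial majorant $u_t \le u + tv(0)$ (yielding $e^{-u_t} \ge e^{-tv(0)}e^{-u}$) together with a matching lower bound from coercivity of $u$ and compactness of $\dom(v)$, both of which are uniform in $t \in [0,\varepsilon]$. Once uniform convergence is secured, Lemma \ref{thm:lim_der} closes the argument.
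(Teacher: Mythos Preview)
Your proposal is correct and follows essentially the same route as the paper's proof: truncate $u$ to $u_m = u + I_{mB^n}$, apply Corollary \ref{cor:dual_Rotem} to each $u_m$, and close via Lemma \ref{thm:lim_der} using the tail estimates of Lemmas \ref{lemma:good_approx}, \ref{lemma:estimates}, and \eqref{eq:estimate_boundary}. Two minor slips to fix when writing out details: the region of agreement from Lemma \ref{lemma:good_approx} is $\delta(m)B^n$ rather than $mB^n$, and for $t>0$ the spurious boundary component of $\dom(u_{m,t}) = (\dom(u)\cap mB^n) + t\dom(v)$ is not literally on $m\sph^{n-1}$ with radial normal --- but both corrections are harmless since the relevant pieces still lie at distance comparable to $m$ from the origin, so the same exponential tail bounds apply.
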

	\begin{proof}[Proof of Theorem \ref{thm:dual_new_intro}] Again, our strategy is to use Lemma \ref{thm:lim_der} to prove our claim by approximation. Consider $u$ as in the hypotheses. We approximate $u$ through the sequence $u_m\coloneqq u+I_{mB^n}$. By \cite[Proposition 4.2]{Att_Wets} $u_m\square (t\sq v)$ epi-converges to $u\square(t\sq v)$ for every $t\geq 0$. Moreover, $u_m\square (t\sq v) \leq u\square(t\sq v)$ for every $m$. Therefore, for $\varepsilon>0$ suitably small we can find $A,c >0$ such that
		\begin{equation}\label{eq:unif_bound}
			e^{u_m\square (t\sq v)}\leq e^{-u\square(t\sq v)(x)}\leq Ae^{-c|x|}
		\end{equation}
		for every $x \in \R^n, m \in \N$, and $t \in [0,\varepsilon]$. Without loss of generality, we suppose $o \in \epi(v)$ and $\max_{y \in \dom(v)} v(y)=0$, so that \[h_{K^{u_m}}+t\overline{v^*}=h_{K^{u_m}}+th_{K^v}.\] This choice affects the calculation with a multiplicative constant that we can ignore by Remark \ref{rem:tr_Al}. For the sake of conciseness, denote $u_t=u\square (t\sq v)$ and $u_{m,t}=u_m\square (t\sq v)$.
		
		By \eqref{eq:unif_bound}
		\begin{align*}
			\int_{\R^n}e^{-u(x)}|x|^{q-n}\, dx - \int_{\R^n}e^{-u_m(x)}|x|^{q-n}\, dx=\\ \int_{\R^n\setminus mB^n}e^{-u(x)}|x|^{q-n}\, dx \leq \int_{\R^n\setminus mB^n}Ae^{-c|x|}|x|^{q-n}\, dx\to 0
		\end{align*}
		as $m \to \infty$. Thus $\mu(\epi(u_m))\to \mu(\epi(u))$.
		
		We now want to prove that the derivative in $t$ of $\mu(\epi(u_{m,t}))$ converges uniformly to \[\int_{\R^n} v^*(\nabla u(x)) e^{-u(x)}|x|^{q-n}\, dx +\int_{\partial \dom(u)} h_{\dom(v)}(N_{\dom(u)}(y)) e^{-u(y)}|y|^{q-n}\, d\haus^{n-1}(y)\] for $t \in [0,\varepsilon]$, where $\varepsilon>0$ is small enough for Corollary \ref{cor:dual_Rotem} to hold as well. We do so approximating the two integrals separately by the ones in \eqref{eq:dual_log} evaluated for $u_m$ with the perturbation $v \in \Coco$ (that is, $v^* \in C_{rec}(\R^n)$).
		
		We start with the integral on $\R^n$. Notice that since $o \in \epi(v)$, we have $v^*\geq 0$. By Lemma \ref{lemma:good_approx}, and since $\dom(u_{m,t})\subset mB^n+\dom(v)$ for $m$ suitably large,
		\begin{align*}
			\left| \int_{\R^n} v^*(\nabla u_t(x)) e^{-u_t(x)}|x|^{q-n}\, dx - \int_{\R^n} v^*(\nabla u_{m,t}(x)) e^{-u_{m,t}(x)}|x|^{q-n}\, dx\right|\\
			\leq 	\left|\int_{\R^n} v^*(\nabla u_t(x) ) e^{-u_t(x)}|x|^{q-n}\, dx- \int_{ \delta(m)B^n} v^*(\nabla u_{m,t}(x)) e^{-u_{m,t}(x)}|x|^{q-n}\, dx \right|	\\ +\left|  \int_{ (mB^n+\dom(v))\setminus \delta(m)B^n} v^*(\nabla u_{m,t}(x)) e^{-u_{m,t}(x)}|x|^{q-n}\, dx\right|\\\leq \int_{\R^n\setminus \delta(m)B^n} v^*(\nabla u_t(x) ) e^{-u_t(x)}|x|^{q-n}\, dx + \int_{\R^n\setminus \delta(m)B^n} v^*(\nabla u_{m,t}(x) ) e^{-u_{m,t}(x)}|x|^{q-n}\, dx
		\end{align*}
		By Lemma \ref{lemma:estimates}, this last line converges to $0$ independently of $t$, since by \eqref{eq:unif_bound} we can use \eqref{eq:estimate_1} or \eqref{eq:estimate_2} depending on $q$ for both the integrals there.
		
		We now repeat a similar procedure for the integral on the boundary of $\partial \dom(u)$. If $u \in \Coco$, we have nothing to prove since for $m$ sufficiently large, one obtains exactly the desired quantity. If $\dom(u)=\R^n$ (and thus $\dom(u_t)=\R^n$) then \[\int_{\partial \dom(u_{m,t})} h_{\dom(v)}(N_{\dom(u_{m,t})}(y)) e^{-u(y)}|y|^{q-n}\, d\haus^{n-1}(y)=0\] converges uniformly to $0$ by the estimates following \eqref{eq:estimate_boundary}. If $\dom(u)\neq \R^n$, then for $m$ sufficiently large $\partial \dom(u_t)\cap \partial \dom(u_{m,t}) \neq \emptyset$. Notice, moreover, that $\partial \dom(u_{m,t})\setminus \partial \dom (u_{t}) \subset \R^n \setminus mB^n$ and $\partial \dom(u_t)\setminus \partial \dom (u_{m,t}) \subset \R^n \setminus mB^n$ for every $m$. We infer, following the same strategy in \eqref{eq:estimate_boundary} and denoting by $L$ the Lipschitz constant of $v^*$,
		\begin{align*}
			\left|	\int_{\partial \dom(u_t)}h_{\dom(v)}(N_{\dom(u_t)}(y))e^{-u_t(y)}|y|^{q-n}\, d\haus^{n-1}(y) \right. -\\
			\left. \int_{\partial \dom(u_{m,t})}h_{\dom(v)}(N_{\dom(u_{m,t})}(y))e^{-u_{m,t}(y)}|y|^{q-n}\, d\haus^{n-1}(y) \right| \\ 
			\leq \int_{\partial \dom(u_t)\setminus \partial \dom (u_{m,t})}h_{\dom(v)}(N_{\dom(u_t)}(y))e^{-u_t(y)}|y|^{q-n}\, d\haus^{n-1}(y)\\ 
			+ \int_{\partial \dom(u_{m,t})\setminus \partial \dom (u_{t})}h_{\dom(v)}(N_{\dom(u_{m,t})}(y))e^{-u_{m,t}(y)}|y|^{q-n}\, d\haus^{n-1}(y)\\
			\leq 2L Ac \int_m^{\infty} e^{-c\tau}\int_{\partial \dom(u) \cap \tau B^n}|y|^{q-n}\, d\haus^{n-1}(y)d\tau.
		\end{align*}
		If $0<q<n$ we continue by \[\ldots \leq 2L Ac \int_m^{\infty} e^{-c\tau}\tau^{n-1}d\tau.\] Otherwise, if $n\leq q$, we conclude by \[\ldots \leq  2L Ac \int_m^{\infty} e^{-c\tau}\tau^{q-1}d\tau. \] In both cases, we have uniform convergence to $0$, concluding the proof. 
	\end{proof}
	
	\begin{remark}
		We conclude this section noting that the same method in the proof of Theorem \ref{thm:dual_new_intro} can be specialized to the case $q\geq n$ without requiring any assumption on the particular position of the epigraphs of $u$ and $v$. In particular, it is possible to recover Theorem \ref{thm:intro_rot} when $v \in \Coco$.
	\end{remark}

\subsection*{Acknowledgments} The core of this work originated from the Ph.D. thesis of the author, supervised by Gabriele Bianchi and Paolo Gronchi, whose support and guidance were fundamental. We extend our gratitude to Monika Ludwig, Fabian Mussnig, Andrea Colesanti, and Donmeng Xi for the many fruitful discussions.

\subsection*{Funding}
The author was supported, in part, by the Austrian Science Fund, (FWF): 10.55776/P34446.

\footnotesize

\bigskip

\parbox[t]{8.5cm}{
Jacopo Ulivelli\\
Institut f\"ur Diskrete Mathematik und Geometrie\\
TU Wien\\
Wiedner Hauptstra{\ss}e 8-10/1046\\
1040 Wien, Austria\\
e-mail: jacopo.ulivelli@tuwien.ac.at}

\begin{thebibliography}{99}

\bibitem{Alesker}
{\sc S.~Alesker}, {\em Valuations on convex functions and convex sets and
  {M}onge--{A}mp{\`e}re operators}, Adv. Geom., 19 (2019),
  pp.~313--322.

\bibitem{GutRoEtAl}
{\sc D.~Alonso-Guti\'{e}rrez, M.~A. Hern\'{a}ndez~Cifre, M.~Roysdon,
  J.~Yepes~Nicol\'{a}s, and A.~Zvavitch}, {\em On {R}ogers-{S}hephard type
  inequalities for general measures}, Int. Math. Res. Not. IMRN,  (2021),
  pp.~7224--7261.

\bibitem{Asymp_II}
{\sc S.~Artstein-Avidan, A.~Giannopoulos, and V.~D. Milman}, {\em Asymptotic
  geometric analysis. {P}art {II}}, vol.~261 of Mathematical Surveys and
  Monographs, American Mathematical Society, Providence, RI, [2021] \copyright
  2021.

\bibitem{HopfLSC}
{\sc O.~Alvarez, E.~N. Barron, H.~Ishii,}, 
{\em Hopf-{L}ax formulas for semicontinuous data}, Indiana Univ. Math. J.,  (1999),
  pp.~993--1035.

\bibitem{Func_Sant}
{\sc S.~Artstein-Avidan, B.~Klartag, and V.~Milman}, {\em The {S}antal\'{o}
  point of a function, and a functional form of the {S}antal\'{o} inequality},
  Mathematika, 51 (2004), pp.~33--48.

\bibitem{Att_Wets}
{\sc H.~Attouch and R.~J.-B. Wets}, {\em Epigraphical analysis}, vol.~6, 1989,
  pp.~73--100.
\newblock Analyse non lin\'{e}aire (Perpignan, 1987).

\bibitem{BallRev}
{\sc K.~Ball}, {\em The reverse isoperimetric problem for {G}aussian measure},
  Discrete Comput. Geom., 10 (1993), pp.~411--420.

\bibitem{Quermass_Bob}
{\sc S.~G. Bobkov, A.~Colesanti, and I.~Fragal\`a}, {\em Quermassintegrals of
  quasi-concave functions and generalized {P}r\'{e}kopa-{L}eindler
  inequalities}, Manuscripta Math., 143 (2014), pp.~131--169.
  
 \bibitem{Boeroeczky_LYZ_log_Mink}
 {\sc K.~J.~B\"or\"oczky, E.~Lutwak, D.~Yang, and G.~Zhang},
 \newblock {\em The logarithmic Minkowski problem}.
 \newblock J. Amer. Math. Soc. {\bf 26} (2013), 831--852.

\bibitem{Boeroeczky_LYZ_Zhao_Gauss}
{\sc K.~J.~B\"or\"oczky, E.~Lutwak, D.~Yang, G.~Zhang, and Y.~Zhao},
\newblock {\em The Gauss image problem}.
\newblock Commun. Pure Appl. Math. {\bf 73} (2020), 1406--1452.

\bibitem{ColFrag}
{\sc A.~Colesanti and I.~Fragal\`a}, {\em The first variation of the total mass
  of log-concave functions and related inequalities}, Adv. Math., 244 (2013),
  pp.~708--749.

\bibitem{cocofu}
{\sc A.~Colesanti, M.~Ludwig, and F.~Mussnig}, {\em {Valuations on {C}onvex
  {F}unctions}}, Int. Math. Res. Not. IMRN, (2019),
  pp.~2384--2410.

\bibitem{ColesantiEtAlHadwigertheoremconvex2020}
{\sc A.~Colesanti, M.~Ludwig, and F.~Mussnig}, {\em The {H}adwiger theorem on
  convex functions, {I}}, arXiv:2009.03702, (2020).

\bibitem{CLM_hom}
{\sc A.~Colesanti, M.~Ludwig, and F.~Mussnig}, {\em A homogeneous decomposition
  theorem for valuations on convex functions}, J. Funct. Anal.,
  279 (2020), pp.~108--573.

\bibitem{ColesantiEtAlHadwigertheoremconvex2021}
{\sc A.~Colesanti, M.~Ludwig, and F.~Mussnig}, {\em The {H}adwiger theorem on
  convex functions, {II}}, arXiv:2109.09434, to appear in Amer. J. Math., (2021).

\bibitem{Had3}
{\sc A.~Colesanti, M.~Ludwig, and F.~Mussnig}, {\em The {H}adwiger
  theorem on convex functions, {III}: {S}teiner formulas and mixed
  {M}onge-{A}mp\`ere measures}, Calc. Var. Partial Differential Equations, 61
  (2022), pp.~Paper No. 181, 37.

\bibitem{Had4}
{\sc A.~Colesanti, M.~Ludwig, and F.~Mussnig}, {\em The {H}adwiger
  theorem on convex functions, {IV}: the {K}lain approach}, Adv. Math., 413
  (2023), pp.~Paper No. 108832, 35.

\bibitem{CorKlar}
{\sc D.~Cordero-Erausquin and B.~Klartag}, {\em Moment measures}, J. Funct.
  Anal., 268 (2015), pp.~3834--3866.

\bibitem{dual_curvature_orl}
{\sc N.~Fang, D.~Ye, Z.~Zhang, and Y.~Zhao}, {\em The dual {O}rlicz curvature measures for log-concave functions and their related Minkowski problems}, arXiv:2309.12260v1, (2023).

\bibitem{Fon}
{\sc I.~Fonseca}, {\em The {W}ulff {T}heorem {R}evisited}, Proceedings:
  Mathematical and Physical Sciences, 432 (1991), pp.~125--145.

\bibitem{Weighted_I}
{\sc M.~Fradelizi, D.~Ye, D.~Langharst, M.~Madiman and A.~Zvavitch}, {\em Weighted {B}runn-{M}inkowski theory {I}: {O}n weighted surface area measures}, J. Math. Anal. Appl., 2 (2024), Paper No. 127519, 30.

\bibitem{GARDBM}
{\sc R.~J. Gardner}, {\em The {B}runn-{M}inkowski inequality}, Bull. Amer.
  Math. Soc., 39 (2002), pp.~355--405.

\bibitem{Gardner_Hug_Weil_Xing_Ye_2019}
{\sc R.~Gardner, D.~Hug, W.~Weil, S.~Xing, and D.~Ye},
\newblock {\em General volumes in the Orlicz--Brunn--Minkowski theory and a related Minkowski problem. I}.
\newblock Calc. Var. Partial Differential Equations {\bf 58} (2019), Art. 12, 35 pp.
\bibitem{Haberl_LYZ}
{\sc C.~Haberl, E.~Lutwak, D.~Yang, and G.~Zhang},
\newblock{\em The even {O}rlicz {M}inkowski problem}.
\newblock Adv. Math. {\bf 224} (2010), 2485--2510.

\bibitem{KnoerrEquiv}
{\sc G.~Hofst\"{a}tter and J.~Knoerr}, {\em Equivariant endomorphisms of convex functions}, J. Funct. Anal., 285 (2023), pp.~Paper No. 109922, 39.

\bibitem{Hofstaetter_Schuster_BS}
{\sc G.~C. Hofst\"atter and F.~E. Schuster},
\newblock {\em Blaschke--Santal\'o inequalities for Minkowski and Asplund endomorphisms}.
\newblock Int. Math. Res. Not. IMRN {\bf 2023} (2023), 1378--1419.

\bibitem{dual_curvature_log}
{\sc Y.~Huang, J.~Liu, D.~Xi, and Y.~Zhao}, {\em Dual curvature measures for
  log-concave functions}, arXiv:2210.02359v2, to appear in J. Differential
  Geom., (2023).

\bibitem{OPstuff}
{\sc Y.~Huang, E.~Lutwak, D.~Yang, and G.~Zhang}, {\em Geometric measures in
  the dual {B}runn-{M}inkowski theory and their associated {M}inkowski
  problems}, Acta Math., 216 (2016), pp.~325--388.

\bibitem{Zhao_Gauss}
{\sc Y.~Huang, D.~Xi, and Y.~Zhao}, {\em The {M}inkowski problem in {G}aussian
  probability space}, Adv. Math., 385 (2021), pp.~Paper No. 107769, 36.

\bibitem{Ivaki_Milman_Uniqueness}
{\sc M.~N.~Ivaki and E.~Milman},
\newblock {\em Uniqueness of solutions to a class of isotropic curvature problems}.
\newblock Adv. Math. {\bf 435} (2023), Art. 109350, 11 pp.

\bibitem{KlarUnif}
{\sc B.~Klartag}, {\em Uniform almost sub-{G}aussian estimates for linear
  functionals on convex sets}, Algebra i Analiz, 19 (2007), pp.~109--148.

\bibitem{Klar_mar}
\leavevmode\vrule height 2pt depth -1.6pt width 23pt, {\em Marginals of geometric inequalities}, in Geometric aspects of
functional analysis, vol.~1910 of Lecture Notes in Math., Springer, Cham,
[2007] \copyright 2007, pp.~133--166.

\bibitem{KlaMM}
\leavevmode\vrule height 2pt depth -1.6pt width 23pt, {\em Logarithmically-concave moment measures {I}}, in
  Geometric aspects of functional analysis, vol.~2116 of Lecture Notes in
  Math., Springer, Cham, 2014, pp.~231--260.

\bibitem{Knoerrsupportduallyepi2021}
{\sc J.~Knoerr}, {\em The support of dually epi-translation invariant
  valuations on convex functions}, J. Funct. Anal., 281 (2021), pp.~Paper No.
  109059, 52.
  
  \bibitem{Knoerr_singular}
  {\sc J.~Knoerr},
  \newblock {\em Singular valuations and the Hadwiger theorem on convex functions}.
  \newblock arXiv:2209.05158, (2022).
  
  \bibitem{Knoerr_smooth}
  {\sc J.~Knoerr},
  \newblock {\em Smooth valuations on convex functions}.
  \newblock J. Differential Geom., to appear.

\bibitem{FromConvToFunc2023}
{\sc J.~Knoerr and J.~Ulivelli}, {\em From valuations on convex bodies to
  convex functions}, arXiv:2301.10049v2, (2023).

\bibitem{DylLyu}
{\sc L.~Kryvonos and D.~Langharst}, {\em Weighted {M}inkowski's existence
  theorem and projection bodies}, arXiv:2111.10923v5, to appear in Trans. Amer.
  Math. Soc.,  (2023).

\bibitem{Li_Legendre}
{\sc J.~Li},
\newblock {\em The Legendre transform, the Laplace transform and valuations}.
\newblock arXiv:2308.07022, (2023).

\bibitem{MetMuss}
{\sc B.~Li and F.~Mussnig}, {\em Metrics and isometries for convex functions},
  Int. Math. Res. Not. IMRN,  (2022), pp.~14496--14563.

\bibitem{LivMi}
{\sc G.~V. Livshyts}, {\em An extension of {M}inkowski's theorem and its
  applications to questions about projections for measures}, Adv. Math., 356
  (2019), pp.~106803, 40.

\bibitem{Maggi_finite}
{\sc F.~Maggi}, {\em Sets of finite perimeter and geometric variational
  problems}, vol.~135 of Cambridge Studies in Advanced Mathematics, Cambridge
  University Press, Cambridge, 2012.
\newblock An introduction to geometric measure theory.

\bibitem{Milman_AGA_perspectives}
{\sc V.~D. Milman},
\newblock {Asymptotic geometric analysis: achievements and perspective}.
\newblock In {\em Geometric Aspects of Functional Analysis}, (R. Eldan, B. Klartag, A. Litvak, E. Milman, eds), Lecture Notes in Math. 2327,
pp. 1--55, Springer, Cham 2023.

\bibitem{Mui_Aleksandrov}
{\sc S.~Mui},
\newblock {\em On the $L^p$ Aleksandrov problem for negative $p$}.
\newblock Adv. Math. {\bf 408} (2022), Art. 108573, 26 pp.

\bibitem{Mussnig_polar}
{\sc F.~Mussnig},
\newblock {\em Volume, polar volume and Euler characteristic for convex functions}.
\newblock Adv. Math. {\bf 344} (2019), 340--373.

\bibitem{Rock}
{\sc R.~Rockafellar and R.~J.~B. Wets}, {\em Variational Analysis}, vol.~317 of
  Grundlehren der Mathematischen Wissenschaften, Springer Berlin, Heidelberg,
  1998.

\bibitem{RockConv}
{\sc R.~T. Rockafellar}, {\em Convex analysis}, Princeton Landmarks in
  Mathematics, Princeton University Press, Princeton, NJ, 1997.
\newblock Reprint of the 1970 original, Princeton Paperbacks.

\bibitem{RotSur1}
{\sc L.~Rotem}, {\em Surface area measures of log-concave functions}, J. Anal.
  Math., 147 (2022), pp.~373--400.

\bibitem{RotSur2}
\leavevmode\vrule height 2pt depth -1.6pt width 23pt, {\em The anisotropic
  total variation and surface area measures}, in Geometric aspects of
  functional analysis, vol.~2327 of Lecture Notes in Math., Springer, Cham,
  [2023] \copyright 2023, pp.~297--312.

\bibitem{baby_rudin}
{\sc W.~Rudin}, {\em Principles of mathematical analysis}, International Series
  in Pure and Applied Mathematics, McGraw-Hill Book Co., New
  York-Auckland-D\"{u}sseldorf, third~ed., 1976.

\bibitem{schneider_2013}
{\sc R.~Schneider}, {\em Convex bodies: the {B}runn-{M}inkowski theory},
  vol.~151 of Encyclopedia of Mathematics and its Applications, Cambridge
  University Press, Cambridge, expanded~ed., 2014.

\bibitem{schneider_weighted}
\leavevmode\vrule height 2pt depth -1.6pt width 23pt, {\em A weighted {M}inkowski theorem for pseudo-cones},
  arXiv:2310.19562, (2023).

\bibitem{WulffSemigroup}
{\sc S.~J. Willson}, {\em A semigroup on the space of compact convex bodies},
  SIAM J. Math. Anal., 11 (1980), pp.~448--457.

\bibitem{Wulff}
{\sc G.~Wulff}, {\em Xxv. {Z}ur {F}rage der {G}eschwindigkeit des {W}achsthums
  und der {A}ufl{\"o}sung der {K}rystallfl{\"a}chen}, Zeitschrift f{\"u}r
  Kristallographie - Crystalline Materials, 34 (1901), pp.~449 -- 530.

\end{thebibliography}
\end{document}